\font\tengoth=eufm10
\font\sevengoth=eufm7
\font\fivegoth=eufm5
\newcommand\lieg{{\mathfrak g}}
\newcommand\liek{{\mathfrak k}}
\newcommand\liel{{\mathfrak l}}
\newcommand\lieh{{\mathfrak h}}
\newcommand\liea{{\mathfrak a}}
\newcommand\lien{{\mathfrak n}}
\newcommand\liet{{\mathfrak t}}
\newcommand\liem{{\mathfrak m}}
\newcommand\lieb{{\mathfrak b}}
\newcommand\liep{{\mathfrak p}}
\newcommand\lieq{{\mathfrak q}}
\newcommand\lier{{\mathfrak r}}
\newcommand\lies{{\mathfrak s}}
\newcommand\liey{{\mathfrak y}}
\newcommand\liegl{\mathfrak {gl}}
\newcommand\liesl{\mathfrak {sl}}
\newcommand\liesp{\mathfrak {sp}}
\newcommand\lieu{{\mathfrak u}}
\newcommand\lieso{\mathfrak {so}}
\newcommand\lief{\mathfrak {f}}
\newcommand\CC{\mathbb C}
\newcommand\RR{\mathbb R}
\newcommand\ZZ{\mathbb Z}
\newcommand\NN{\mathbb N}
\newcommand\SO{{\mathrm{SO}}}
\newcommand\SU{{\mathrm{SU}}}
\newcommand\U{{\mathrm{U}}}
\newcommand\calU{{\mathcal{U}}}
\newcommand\calS{{\mathcal{S}}}
\newcommand\calB{{\mathcal{B}}}
\newcommand\matc[4]{\left( {#1\@@atop #3}{#2\@@atop #4}\right)}
\newcommand\matr[4]{\left( {\hfill #1\@@atop\hfill #3}{\hfill
#2\@@atop\hfill #4}\right)}
\newcommand\matl[4]{\left( { #1\@@atop #3}{ #2\@@atop\hfill #4}\right)}
\theoremstyle{plain}
\newtheorem{thm}{Theorem}[section]
\newtheorem{lem}[thm]{Lemma}
\newtheorem{prop}[thm]{Proposition}
\newtheorem{cor}[thm]{Corollary}
\theoremstyle{definition}
\newtheorem{defn}[thm]{Definition}
\theoremstyle{remark}
\title[The classifying ring]{\sc The image of the Lepowsky homomorphism for
the group $F_4$}
\author{A. Brega}
\address{CIEM-FaMAF, Universidad Nacional de
C\'or\-do\-ba, C\'or\-do\-ba~5000, Argentina}
\email{brega@mate.uncor.edu}
\author{L. Cagliero}
\address{CIEM-FaMAF, Universidad Nacional de C\'or\-do\-ba,
C\'or\-do\-ba~5000, Argentina} \email{cagliero@mate.uncor.edu}
\author{J. Tirao}
\address{CIEM-FaMAF, Universidad Nacional de C\'or\-do\-ba,
C\'or\-do\-ba~5000, Argentina} \email{tirao@mate.uncor.edu}
\begin{document}
\begin{abstract}
Let $G_o$ be a semisimple Lie group, let $K_o$ be a maximal compact
subgroup of $G_o$ and let $\liek\subset\lieg$ denote the complexification
of  their Lie algebras. Let $G$ be the adjoint group of $\lieg$ and
let $K$ be the connected Lie subgroup of $G$ with Lie algebra $ad(\liek)$.
If $U(\lieg)$ is the universal enveloping algebra of $\lieg$ then
$U(\lieg)^K$ will denote the centralizer of $K$ in $U(\lieg)$.
Also let $P:U(\lieg)\longrightarrow U(\liek)\otimes U(\liea)$ be the
projection map corresponding to the direct sum
$U(\lieg)=\bigl(U(\liek)\otimes U(\liea)\bigr)\oplus U(\lieg)\lien$
associated to an Iwasawa decomposition of $G_o$ adapted to $K_o$. In
this paper we give a characterization of the image of $U(\lieg)^K$
under the injective antihomorphism $P:U(\lieg)^K\longrightarrow
U(\liek)^M\otimes U(\liea)$, considered by Lepowsky in \cite{L}, when
$G_o$ is locally isomorphic to F$_4$.
\end{abstract}

\smallskip

\maketitle
\section{Introduction}\label{Intr}

Let $G_o$ be a connected, noncompact, real semisimple Lie group with
finite center, and let $K_o$ denote a maximal compact subgroup of
$G_o$.  We denote with $\lieg_o$ and $\liek_o$ the Lie algebras of
$G_o$ and $K_o$, and  $\liek\subset\lieg$ will denote the respective
complexified Lie algebras. Let $G$ be the adjoint group of $\lieg$ and
let $K$ be the connected Lie subgroup of $G$ with Lie algebra $ad(\liek)$.
Let $U(\lieg)$ be the universal enveloping algebra of $\lieg$ and let
$U(\lieg)^K$ denote the centralizer of $K$ in $U(\lieg)$.

In order to contribute to the understanding of $U(\lieg)^K$ Kostant
suggested to consider the projection map $P:U(\lieg)\longrightarrow
U(\liek)\otimes U(\liea)$, corresponding to the direct sum
$U(\lieg)=\bigl(U(\liek)\otimes U(\liea)\bigr)\oplus U(\lieg)\lien$
associated to an Iwasawa decomposition
$\lieg=\liek\oplus\liea\oplus\lien$ adapted to $\liek$. In \cite{L}
Lepowsky studied the restriction of $P$ to $U(\lieg)^K$ and proved,
among other things, that one has the following exact sequence
$$0\longrightarrow U(\lieg)^K\overset P\longrightarrow
U(\liek)^M\otimes U(\liea),$$ where $U(\liek)^M$ denotes the
centralizer of $M$ in $U(\liek)$, $M$  being the centralizer of
$\liea$ in $K$. Moreover if $U(\liek)^M\otimes U(\liea)$ is given
the tensor product algebra structure then $P$ becomes an
antihomomorphism of algebras. Hence to go any further in this
direction it is necessary to determine the image of $P$.

To determine the actual image $P(U(\lieg)^K)$ Tirao introduced in \cite{Ti}
a subalgebra $B$ of $U(\liek)^M\otimes U(\liea)$ defined by a set of
linear equations in $U(\liek)$ derived from certain embeddings between Verma modules
(see \eqref{eqB} below). Then he proved that $P(U(\lieg)^K)$ always lies in
$B$, and furthermore that $P(U(\lieg)^K)=B^{W_\rho}$ when $G_o$ = SO($n$,1) or SU($n$,1).
Here $W$ is the Weyl group of the pair $(\lieg,\liea)$, $\rho$ is half
the sum of the positive roots of $\lieg$ and $B^{W_\rho}$ is the subalgebra
of all elements in $B$ that are invariant under the tensor product action
of $W$ on $U(\liek)^M$ and the translated action of $W$ on $U(\liea)$
(see Corollary 3.3 of \cite{KT}).

In \cite{BCT} we proved that $P(U(\lieg)^K)=B$ when $G_0=$ Sp$(n,1)$,
and more recently, we showed that $B^{W_\rho}=B$ when $G_0=$ SO($n$,1) or
SU($n$,1) (see \cite{BCT1}). Hence these results established that $P(U(\lieg)^K)=B$
for every classical real rank one semisimple Lie group with finite center. This
paper is devoted to proving that this result also holds for the exceptional
Lie group F$_4$. The main result of the present work is the following,
\begin{thm}\label{main}
If $G_o$ is locally isomorphic to F$_4$, then $P(U(\lieg)^K) =B$.
\end{thm}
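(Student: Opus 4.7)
The plan is to follow the general strategy of \cite{BCT}, where the equality $P(U(\lieg)^K)=B$ was established for $\mathrm{Sp}(n,1)$, and to handle the additional representation-theoretic complications that appear for the exceptional rank-one form $F_{4(-20)}$.

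First I would fix the data appropriate to this case: the complexified maximal compact $\liek$ is a cover of $\lieso(9,\CC)$, the subalgebra $\liem$ is covered by $\lieso(7,\CC)$, and $\liea$ is one-dimensional, spanned by some $H$, with $\liep$ the $16$-dimensional spin representation of $\liek$. In particular $U(\liea)=\CC[H]$, so writing $b=\sum_{j=0}^{d}b_j\otimes H^j\in B$ with $b_j\in U(\liek)^M$, the conditions defining $B$ become an explicit linear system on the coefficients $b_j$ coming from the Verma module embeddings of \cite{Ti}.

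Since the inclusion $P(U(\lieg)^K)\subseteq B$ is already known, the task is surjectivity, and I would prove it by induction on the top $H$-degree $d$. Given $b\in B$ of top degree $d$, I would produce $u\in U(\lieg)^K$ whose image under $P$ agrees with $b$ in top $H$-degree; then $b-P(u)$ still lies in $B$ and has strictly smaller $H$-degree, closing the induction. The construction of $u$ hinges on realizing the top coefficient $b_d\in U(\liek)^M$ as the leading symbol, in the natural PBW-filtration, of a $K$-invariant built from the spin module $\liep$. Concretely, one chooses explicit $K$-equivariant maps from the $M$-invariants in $S^{d}(\liep)$ into $U(\lieg)^K$ and inverts them modulo lower-order terms, in the spirit of the recursive constructions of Sections~3--4 of \cite{BCT}.

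The core technical point is therefore a representation-theoretic surjectivity statement: every $K$-type admitted by the $B$-equations must actually be produced by such $K$-equivariant maps built out of the symmetric algebra of $\liep$. This reduces to analyzing the branching $\lieso(9,\CC)\downarrow\lieso(7,\CC)$ together with the harmonic decomposition $S(\liep)=\mathcal{H}(\liep)\otimes S(\liep)^K$, and to matching the $K$-types of $\mathcal{H}(\liep)$ against those permitted in $U(\liek)^M$ modulo the $B$-relations.

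The main obstacle I anticipate is precisely this matching. For $F_4$ the relevant branching involves the triality of $\lieso(8,\CC)$ embedded in $\liek$, and the $K$-spectrum of $S^n(\liep)$ is richer than in the $\mathrm{Sp}(n,1)$ setting, so several $K$-types allowed by $B$ occur with non-obvious multiplicities. I therefore expect the proof to require a delicate case analysis of the ``extremal'' $K$-types admitted by the $B$-relations, together with explicit constructions of preimages in $U(\lieg)^K$---plausibly assisted by computer algebra for the low-degree base cases---using the Cayley-algebra realization of $F_4$ to make the interaction between $\liep$, $\liek$, and $\liea$ sufficiently explicit.
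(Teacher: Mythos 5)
Your high-level outline matches the paper's: the inclusion $P(U(\lieg)^K)\subseteq B$ is already known, and one proves the reverse inclusion by descending induction on the $Z$-degree $m$, at each step constructing $u\in U(\lieg)^K$ whose image $P(u)$ shares the leading term with $b$. That step is exactly Proposition~\ref{cor9} (quoted from \cite{BCT}, Prop.~2.6), which says such a $u$ exists \emph{provided} the leading term $b_m\otimes Z^m$ is $W$-invariant (here: $m$ even) \emph{and} the Kostant degree satisfies $d(b_m)\le m$. The content of the theorem therefore collapses to proving this a priori estimate for an arbitrary $b\in B$, which is Theorem~\ref{main'}.

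Here your proposal has a genuine gap: you frame the ``core technical point'' as a representation-theoretic matching problem, to be handled via the branching $\lieso(9,\CC)\downarrow\lieso(7,\CC)$, triality, the harmonic decomposition of $S(\liep)$, and explicit (possibly machine-assisted) constructions. But branching data and harmonic decompositions only describe which $K$-types can occur in each degree of $S(\liep)$; by Kostant--Rallis every $\gamma\in\Gamma$ already occurs there with multiplicity one in degree $d(\gamma)$, so there is no obstruction at that level. What must be shown is that membership of $b$ in $B$ --- i.e.\ the family of congruences $E_\alpha^n\,b(n-Y_\alpha-1)\equiv b(-n-Y_\alpha-1)\,E_\alpha^n \pmod{U(\liek)\liem^+}$ --- forces $d(b_m)\le m$. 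None of the tools you list engages with these equations, and your proposal gives no mechanism to extract the degree bound from them. It also never addresses the evenness of $m$ (the $W$-invariance of the leading term).

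The paper's actual route is quite different and is precisely where the work lies. It triangularizes the defining equations of $B$ via discrete derivatives (Theorem~\ref{thm1}), applies the derivations $\dot E$ and $\dot X_\delta$ to pass to $\liek^+$-dominant vectors, and crucially proves transversality results (Theorems~\ref{m+/n} and~\ref{k+dom}) that upgrade congruences modulo $U(\liek)\liem^+$ to genuine equalities for suitable weight vectors --- the authors note this part required real adaptation from the $\mathrm{Sp}(n,1)$ case. It then introduces the ``degree property'' and reduces the Kostant-degree estimate to the vanishing statement Theorem~\ref{Btilde2}, which is proved by a delicate decreasing double induction on the diagonals of the array of $K$-types, feeding the resulting linear systems into the determinant analysis already carried out in \cite{BCT}. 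No Cayley-algebra realization and no computer algebra enter. So while your outer scaffolding (inclusion plus induction on $m$ via a leading-term lift) is the right one, the step you identify as the crux is not the crux, and the step that \emph{is} the crux is absent from your plan.
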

\noindent This result confirms our old belief that the following theorem holds,
\begin{thm}\label{main1}
Let $G_o$ be a real rank one  semisimple Lie group. Then the image of the Lepowsky
homomorphism $P$ is the algebra $B$.
\end{thm}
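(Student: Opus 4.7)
The plan is to reduce Theorem~\ref{main1} to a case-by-case verification based on the classification of noncompact real simple Lie algebras of real rank one. Up to local isomorphism, every such algebra falls into one of the four families $\lieso(n,1)$ (for $n\geq 2$), $\liesu(n,1)$, $\liesp(n,1)$, or is the exceptional algebra associated with $F_4$. For a general real rank one semisimple $G_o$ with finite center, exactly one simple ideal of $\lieg_o$ can be noncompact and it must have real rank one; the remaining ideals are compact and contribute to $\liek$ but not to $\liea$ or $\lien$. Since $P$, the subalgebra $B$, and the containment $P(U(\lieg)^K)\subset B$ all split compatibly across this direct sum decomposition, the assertion reduces at once to the simple rank one setting.

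Once reduced, the theorem follows by assembling the known results in each of the four families. For $G_o$ locally isomorphic to $SO(n,1)$ or $SU(n,1)$, Tirao's theorem in \cite{Ti} gives $P(U(\lieg)^K)=B^{W_\rho}$, while our earlier work \cite{BCT1} establishes that $B^{W_\rho}=B$ in these two cases; combining the two identities yields $P(U(\lieg)^K)=B$. For $G_o$ locally isomorphic to $Sp(n,1)$, the equality $P(U(\lieg)^K)=B$ is the main theorem of \cite{BCT}. Finally, Theorem~\ref{main} above disposes of the remaining exceptional case $F_4$, which is the content of the present paper. Since the four possibilities exhaust the classification, Theorem~\ref{main1} follows.

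All the genuine mathematical content is therefore concentrated in the four individual case proofs; the reduction and assembly step is essentially bookkeeping. The main obstacle throughout the program has been, and remains, the explicit analysis of the embeddings between Verma modules that cut out $B$ in each rank one family: the classical cases were handled in \cite{Ti,BCT,BCT1}, and the exceptional case $F_4$ is addressed in the body of this paper via Theorem~\ref{main}. In particular, no new argument beyond invoking these four results is required at this stage; the value of Theorem~\ref{main1} is that it records the culmination of the case-by-case program and confirms that the uniform description of $P(U(\lieg)^K)$ by the Verma module equations defining $B$ is valid across all real rank one semisimple Lie groups.
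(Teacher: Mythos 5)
Your proposal is correct and takes essentially the same approach as the paper: Theorem~\ref{main1} is deduced from the classification of real rank one simple Lie algebras together with the case-by-case results in \cite{Ti} and \cite{BCT1} (for SO($n$,1), SU($n$,1)), \cite{BCT} (for Sp($n$,1)), and Theorem~\ref{main} of the present paper (for F$_4$). One minor caveat worth flagging is that the paper's standing hypothesis in Section~\ref{algebraB} excludes $G_o$ locally isomorphic to SL$(2,\RR)$, so Theorem~\ref{main1} is to be read with that exclusion and your family $\lieso(n,1)$, $n\geq 2$, should start at $n\geq 3$ (similarly $\liesu(n,1)$ at $n\geq 2$).
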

We point out that our proof of Theorem \ref{main1} follows a general pattern in all cases, however at certain points in the argument there are some substantial differences. Certainly the cases of Sp($n$,1) and
F$_4$ are the most difficult of the rank one groups.

\smallskip

The proof of Theorem \ref{main} follows the same ideas used to prove the analogue
theorem for the symplectic group Sp$(n,1)$, however we had to overcome some difficulties
to establish the transversality results needed (Section \ref{trans.}), and the a priori
estimates of the Kostant degrees (Section \ref{Kostant}). In Section \ref{trans.} we give
a new and simplified version of the corresponding transversality results obtained in the
symplectic case (see Section 4 of \cite{BCT}). This new version
is sufficient because of the introduction of a simplifying hypothesis called the
\textit{degree property}, which is done in Section \ref{Kostant}. In this section we
use this property to obtain an a priori estimate of the Kostant degree
of certain elements $b\in B$. This allows us to reduce the proof of Theorem \ref{main}
to proving Theorem \ref{Btilde2} (see Proposition \ref{Btilde4}). The proof of this last theorem  is given in Section \ref{mainproof}, following the ideas
developed in the symplectic case. In fact, most of the results proved in Section 6 of
\cite{BCT} hold in this case with appropriate changes.

\section{The algebra B and the image of $U(\lieg)^K$}\label{algebraB}

In this section we assume that $G_o$ be a connected, noncompact, real
semisimple Lie group with finite center and of split rank one, not locally
isomorphic to SL$(2,\RR)$. Let $\liet_o$ be a Cartan subalgebra of the
Lie algebra $\liem_o$ of $M_o$.
Set $\lieh_o=\liet_o\oplus\liea_o$ and let $\lieh=\liet\oplus\liea$ be the
corresponding complexification. Then $\lieh_o$ and $\lieh$ are
Cartan subalgebras of $\lieg_o$ and $\lieg$, respectively. Choose a
Borel subalgebra $\liet\oplus\liem^+$ of the complexification
$\liem$ of $\liem_o$ and take $\lieb=\lieh\oplus\liem^+\oplus\lien$
as a Borel subalgebra of $\lieg$. Let $\Delta$ and $\Delta^+$ be,
respectively, the corresponding sets of roots and positive roots of
$\lieg$ with respect to $\lieh$. If $\alpha\in\Delta$ let $X_\alpha$
denote a nonzero root vector associated to $\alpha$. Also, let $\theta$ be
be the Cartan involution, and let $\lieg=\liek\oplus\liep$ be the
Cartan decomposition of $\lieg$ corresponding to $(G_o,K_o)$.

\smallskip

If $\langle\,,\,\rangle$ denotes the Killing form of $\lieg$, for
each $\alpha\in\Delta$ let $H_\alpha\in\lieh$ be the unique element
such that $\phi(H_\alpha)=2\langle\phi,\alpha\rangle/
\langle\alpha,\alpha\rangle$ for all $\phi\in\lieh^*$, and let
$\lieh_\RR$ be the real span of $\{H_\alpha:\alpha\in \Delta\}$.
Also set $H_\alpha=Y_\alpha+Z_\alpha$ where $Y_\alpha\in\liet$ and
$Z_\alpha\in\liea$, and let $P_+=\{\alpha\in\Delta^+:Z_\alpha\neq0\}$.
If $\alpha\in P_+$ is a simple root and $\lambda=\alpha|_\liea$ we
let $\lieg(\lambda)$ denote the real reductive rank one subalgebra of
$\lieg_o$ associated to $\lambda$. We point out that $Y_\alpha\ne0$
if and only if $[\lieg(\lambda),\lieg(\lambda)]\not\simeq
\liesl(2,\RR)$. Hence, by our assumptions on $G_o$, for any simple root
$\alpha\in P_+$ we have $Y_\alpha\neq0$.

\smallskip

If $\alpha\in P_+$ we have $\liea={\CC}Z_\alpha$ and we can view the
elements in $U(\liek)\otimes U(\liea)$ as polynomials in $Z_\alpha$ with
coefficients in $U(\liek)$. For any such a root $\alpha$ we set
$E_\alpha=X_{-\alpha}+\theta X_{-\alpha}$. Now we introduce the subalgebra $B$
of $U(\liek)^M\otimes U(\liea)$ defined by Tirao in \cite{Ti}.

\begin{defn}\label{B}
Let $B$ be the algebra of all $b\in U(\liek)^M\otimes U(\liea)$
that satisfy
\begin{equation}\label{eqB}
E_\alpha^nb(n-Y_\alpha-1)\equiv b(-n-Y_\alpha-1)E_\alpha^n,
\end{equation}
for all simple roots $\alpha\in P_+$ and all $n\in\NN$. Here the congruence
is module the left ideal $U(\liek){\liem}^+$ of $U(\liek)$. This definition is
motivated by equations \eqref{eq1} (see Corollary \ref{cor6}).
\end{defn}
\noindent In Theorem 5 of \cite{Ti} Tirao proved
that $B$ is a subalgebra of $U(\liek)^M\otimes U(\liea)$, and in
Corollary 6 of the same paper he proved
that $P(U(\lieg)^K)\subset B$. For further reference we state
this Corollary.

\begin{cor}\label{cor6}
Let $\alpha\in P_+$ be a
simple root. Then for all $n\in\NN$
and $u\in U(\lieg)^K$ we have
\begin{equation}\label{eq1}
E_\alpha^nP(u)(n-Y_\alpha-1)\equiv P(u)(-n-Y_\alpha-1)E_\alpha^n,
\end{equation}
where the congruence is module $U(\liek){\liem}^+$.
\end{cor}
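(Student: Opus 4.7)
The plan is to verify the congruence \eqref{eq1} by testing both sides on highest weight vectors of Verma modules of $\lieg$, reducing the desired identity modulo $U(\liek)\liem^+$ to a vector identity that follows from classical $\liesl(2)$-theory together with the $K$-invariance of $u$. Fix a simple root $\alpha\in P_+$ and $n\in\NN$. Given any $\liem^+$-antidominant integral $\mu\in\liet^*$, choose $\Lambda\in\lieh^*$ with $\Lambda|_\liet=\mu$ and $\Lambda(H_\alpha)=n-1$; since $H_\alpha=Y_\alpha+Z_\alpha$, this forces $\Lambda(Z_\alpha)=n-1-\mu(Y_\alpha)$. Let $v_\Lambda$ be the highest weight vector of the Verma module $M_\Lambda$. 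Because $\lien v_\Lambda=0$, the Iwasawa decomposition yields $uv_\Lambda=P(u)v_\Lambda$; writing $P(u)=\sum_j p_j\otimes Z_\alpha^j$ with $p_j\in U(\liek)^M$, this equals $P(u)(n-Y_\alpha-1)v_\Lambda$, since $Y_\alpha\in\liet$ commutes with every $p_j$ and acts by $\mu(Y_\alpha)$ on $v_\Lambda$.

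By the classical Verma theorem applied to the $\liesl(2)$-triple $\{X_\alpha,H_\alpha,X_{-\alpha}\}$, our specialization $\Lambda(H_\alpha)=n-1$ makes $X_{-\alpha}^n v_\Lambda$ a new highest weight vector in $M_\Lambda$, annihilated by $\liem^+\oplus\lien$, of $\lieh$-weight $\Lambda-n\alpha$. Using the numerical identity $\alpha(Y_\alpha)+\alpha(Z_\alpha)=\alpha(H_\alpha)=2$, its new $\liea$-weight computes as $-n-1-(\mu-n\alpha|_\liet)(Y_\alpha)$, which is precisely the value substituted when evaluating $P(u)(-n-Y_\alpha-1)$ on a vector of $\liet$-weight $\mu-n\alpha|_\liet$. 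Consequently,
$$uX_{-\alpha}^n v_\Lambda=P(u)(-n-Y_\alpha-1)X_{-\alpha}^n v_\Lambda.$$

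The last ingredient is $K$-invariance: $[u,E_\alpha^n]=0$, so $E_\alpha^n uv_\Lambda=uE_\alpha^n v_\Lambda$. Since $\theta X_{-\alpha}\in\lien$ kills $v_\Lambda$, one has $E_\alpha v_\Lambda=X_{-\alpha}v_\Lambda$, and I would argue inductively that $E_\alpha^n v_\Lambda\equiv X_{-\alpha}^n v_\Lambda$ modulo $U(\liek)\liem^+\cdot v_\Lambda$. Combining with the preceding two displays then gives
$$E_\alpha^n P(u)(n-Y_\alpha-1)v_\Lambda\equiv P(u)(-n-Y_\alpha-1)E_\alpha^n v_\Lambda\pmod{U(\liek)\liem^+\,v_\Lambda},$$
and letting $\mu$ vary over a Zariski-dense set of $\liem^+$-antidominant integral weights forces \eqref{eq1} in $U(\liek)/U(\liek)\liem^+$. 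The principal obstacle is the identification $E_\alpha^n v_\Lambda\equiv X_{-\alpha}^n v_\Lambda$: expanding $(X_{-\alpha}+\theta X_{-\alpha})^n$ and commuting all $\theta X_{-\alpha}$'s to the right produces iterated commutators landing in $\lieg_{-2\alpha|_\liet}\subset\liem$, which a priori need not vanish on $v_\Lambda$; the bookkeeping is to show these corrections are absorbed into $U(\liek)\liem^+$, using the specific root structure of the Iwasawa decomposition.
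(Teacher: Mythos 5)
The paper does not give a proof of this corollary; it refers to Corollary~6 of \cite{Ti}, which is indeed based on Verma module embeddings, so your strategy is in the right spirit. However, your write-up leaves two genuine gaps, and the second is more serious than you acknowledge.

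First, notice that $\liem^+ v_\Lambda = 0$, so $U(\liek)\liem^+\,v_\Lambda = 0$; hence the ``congruence modulo $U(\liek)\liem^+\,v_\Lambda$'' you appeal to is actually an equality, and your claim is $E_\alpha^n v_\Lambda = X_{-\alpha}^n v_\Lambda$. By the inductive computation
$$E_\alpha^{k+1}v_\Lambda = X_{-\alpha}E_\alpha^{k}v_\Lambda + \theta X_{-\alpha}E_\alpha^{k}v_\Lambda,$$
this comes down to the vanishing of the commutator $[\theta X_{-\alpha},X_{-\alpha}]$ when applied to the relevant Verma vectors. That commutator lies in $\lieg_{-\theta\alpha-\alpha}$, a root space of $\liem$ restricting to $-2\alpha|_\liet$, and you correctly flag that it need not kill $v_\Lambda$: whether it does depends on whether $-\theta\alpha-\alpha$ fails to be a root or lies in $\liem^+$ rather than $\liem^-$. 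You defer this to ``bookkeeping using the specific root structure,'' but for a statement asserted for \emph{all} simple $\alpha\in P_+$ in the general split rank one setting this is exactly the content that must be verified, not an afterthought.

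Second, and this is the gap you do not flag, the Zariski-density step does not go through as stated. You conclude from $(\mathrm{LHS}-\mathrm{RHS})v_\Lambda = 0$ for a dense family of $\Lambda$ that $\mathrm{LHS}-\mathrm{RHS}\in U(\liek)\liem^+$. But the $U(\liek)$-annihilator of $v_\Lambda$ is strictly larger than $U(\liek)\liem^+$: already $H-\mu(H)$ for $H\in\liet$ lies in it, and the map $U(\liek)/U(\liek)\liem^+\to M_\Lambda$ is far from injective (comparing PBW sizes, the source is a polynomial algebra in roughly $\dim\liem^- + \dim\liet + 2|P_+|$ variables while $M_\Lambda\simeq U(\liem^-\oplus\bar\lien)$ has only $\dim\liem^- + |P_+|$). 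To make this step work one must use that $\mathrm{LHS}-\mathrm{RHS}$ is not a general element of $U(\liek)$: it is $\liem^+$-dominant (both $E_\alpha^n$ and the coefficients of $P(u)$ commute with $\liem^+$ modulo $U(\liek)\liem^+$) and an $\liet$-weight vector of weight $-n\alpha|_\liet$. The relevant injectivity statement — that an $\liem^+$-dominant element of $U(\liek)$ killing $v_\Lambda$ for all $\Lambda$ in the family must lie in $U(\liek)\liem^+$ — is the real technical heart of Tirao's argument and is absent from your proposal. Without it the passage from the identity in $M_\Lambda$ to the identity in $U(\liek)/U(\liek)\liem^+$ is unjustified.

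Steps (i) and (ii) of your argument, namely $uv_\Lambda = P(u)(n-Y_\alpha-1)v_\Lambda$ and $uX_{-\alpha}^nv_\Lambda = P(u)(-n-Y_\alpha-1)X_{-\alpha}^nv_\Lambda$, are correct and the weight computation matching $(\Lambda-n\alpha)(Z_\alpha)$ with $-n-1-(\mu-n\alpha|_\liet)(Y_\alpha)$ via $\alpha(Y_\alpha)+\alpha(Z_\alpha)=2$ is fine. So the skeleton is right; what is missing is precisely the two lemmas that make the skeleton load-bearing.
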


\smallskip

In order to prove Theorem \ref{main} we will now introduce some notation and
recall known results. Let $\Gamma$ denote the set of all equivalence classes of
irreducible holomorphic finite dimensional $K$-modules $V_\gamma$
such that $V_\gamma^M\not=0$. Any $\gamma\in\Gamma$ can be realized
as a submodule of all harmonic polynomial functions on $\liep$,
homogeneous of degree $d$, for a uniquely determined $d=d(\gamma)$
(see \cite{KR}). We shall refer to the non negative integer $d(\gamma)$ as
the {\em Kostant degree} of $\gamma$. If $V$ is any $K$-module and
$\gamma\in\hat K$ then $V_\gamma$ will denote the isotypic component
of $V$ corresponding to $\gamma$. Let $V$ be a locally finite $K$-module
such that $V^M\neq 0$ and let $v\in V^M$, $v\neq 0$. Since $V$ is locally
finite, we can decompose $v$ into $K$-isotypic $M$-invariants as follows
$$v= \sum_{\gamma \in \Gamma}\textbf{v}_{\gamma},$$
where $\textbf{v}_{\gamma}\in V_\gamma$ denotes the $\gamma$-isotypic
component of $v$. Then we define the {\emph{Kostant degree}} of $v$ by,
\begin{equation}\label{Kostdeg}
d(v) = \max \{d(\gamma): \textbf{v}_{\gamma}\neq 0\}.
\end{equation}

\noindent Since we are mainly concerned with representations $\gamma\in\Gamma$
that occur as subrepresentations of $U(\liek)$ we set,
\begin{equation}\label{Gamma1}
\Gamma_1=\{\gamma\in \Gamma:\gamma \text{ is a subrepresentation of
} U(\liek)\}.
\end{equation}

\smallskip

If $0\ne b\in U(\liek)\otimes U(\liea)$ we can write $b=b_m\otimes
Z^m+\cdots+b_0$ in a unique way with $b_j\in U(\liek)$ for
$j=0,\dots,m$, $b_m\ne0$ and $Z=Z_\alpha$ for any $\alpha\in P_+$
simple. We shall refer to $b_m$ (resp. $\widetilde b=b_m\otimes
Z^m$) as the {\em leading coefficient} (resp. {\em leading term}) of
$b$ and to $m$ as the {\em degree} of $b$. Also, let 0 be the
leading coefficient and the leading term of $b=0$.

Let $M'_o$ be the normalizer of $A_o$ in $K_o$ and let $W=M'_o/M_o$
be the Weyl group of $(G_o,K_o)$. Then $\big(U(\liek)^M\otimes
U(\liea)\big)^W$ denotes the ring of $W$-invariants in
$U(\liek)^M\otimes U(\liea)$, under the tensor product action of the
natural actions of $W$ on $U(\liek)^M$ and $U(\liea)$, respectively.

\smallskip

At this point it is convenient to state the following result. Its
proof is given in Proposition 2.6 of \cite{BCT}, using the techniques
and the notation of Section 3 of \cite{Ti}.

\begin{prop}\label{cor9} If $\widetilde b=b_m\otimes Z^m\in
\big(U(\liek)^M\otimes U(\liea)\big)^W$ and $d(b_m)\le m$, then
there exits $u\in U(\lieg)^K$ such that $\widetilde b$ is the
leading term of $b=P(u)$.
\end{prop}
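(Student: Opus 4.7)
My plan is to construct $u\in U(\lieg)^K$ by lifting $\widetilde b$ through the symmetrization map $\sigma\colon S(\lieg)\to U(\lieg)$, reducing the problem to the symmetric algebra where Kostant--Rallis theory applies directly. Since $\sigma$ is a $K$-equivariant linear isomorphism, it suffices to build an appropriate $K$-invariant $q\in S(\lieg)^K$ and verify that $P(\sigma(q))$ has leading term $\widetilde b$.

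The first step is to unpack the hypothesis $d(b_m)\le m$. Decompose $b_m=\sum_{\gamma\in\Gamma}b_{m,\gamma}$ into $K$-isotypic $M$-invariants in $U(\liek)^M$; only those $\gamma$ with $d(\gamma)\le m$ contribute. By Kostant--Rallis, every such $\gamma$ is realized in the harmonic polynomials $H(\liep)$ of degree exactly $d(\gamma)$, and one has $S(\liep)=H(\liep)\otimes S(\liep)^K$, with Chevalley restriction identifying $S(\liep)^K$ with $S(\liea)^W$. Since $\liea=\CC Z$, the latter ring is $\CC[Z^2]$, and the $W$-invariance of $\widetilde b=b_m\otimes Z^m$ matches the parity of $m$ with that of each $d(\gamma)$ occurring in $b_m$.

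Given this, for each $\gamma$ with $d(\gamma)=d\le m$ of the correct parity, I would construct a $K$-invariant element $q_\gamma\in (S(\liek)\otimes S(\liep))^K$ by pairing an appropriate $M$-invariant vector in the $\gamma$-isotypic component of $S(\liek)$ (corresponding to $b_{m,\gamma}$ under symmetrization of $\liek$) with a harmonic polynomial of degree $d$ in $H(\liep)$ of type $\gamma$, and multiplying by $(Z^2)^{(m-d)/2}$ viewed through the Chevalley isomorphism. Summing over $\gamma$ gives $q\in S(\lieg)^K$, and I set $u=\sigma(q)$.

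The remaining step, which I expect to be the hard part, is to show that the leading term of $P(u)$ is exactly $\widetilde b$. The projection $P$ is compatible, on the level of the symmetric algebra and at the top $\liea$-filtration, with the composition of $K$-averaging on $\liep$ followed by restriction to $\liea$; concretely, modulo $U(\lieg)\lien$ and terms of lower $\liea$-degree, $P(\sigma(q))$ agrees with the image of $q$ under the natural map $(S(\liek)\otimes S(\liep))^K\to S(\liek)^M\otimes S(\liea)^W$ induced by Chevalley restriction. By construction this image is $b_m\otimes Z^m$. The subtlety is that $\sigma$ is not an algebra homomorphism, so one must argue filtration by filtration: any discrepancy between $\sigma$ of a product and the corresponding product in $U(\lieg)$ lies in a lower filtration piece and hence contributes only to coefficients of $Z^j$ with $j<m$ after applying $P$. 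Once this filtration compatibility is established, the preceding harmonic-polynomial bookkeeping finishes the proof.
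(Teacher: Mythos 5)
The paper does not prove this proposition here: it cites Proposition 2.6 of \cite{BCT}, which in turn rests on the techniques of Section 3 of \cite{Ti}. Judged on its own, your outline invokes the right tools---$K$-equivariant symmetrization, Kostant--Rallis, the Chevalley isomorphism $S(\liep)^K\simeq S(\liea)^W$, and Kostant-degree bookkeeping---and it is consistent with the machinery this paper itself uses in Section \ref{Kostant}. But the step you flagged as ``the hard part'' contains a genuine gap.

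Your claim that the symmetrization discrepancy ``lies in a lower filtration piece and hence contributes only to coefficients of $Z^j$ with $j<m$'' is false. The filtration controlling $Z$-degree under $P$ is the one by $\liep$-degree, $U(\lieg)_{[m]}=U(\liek)\sigma(S_{\le m}(\liep))$, not the canonical filtration of $U(\lieg)$. Take $q=s\otimes p$ with $s\in S_a(\liek)$, $p\in S_m(\liep)$. Then $\sigma(q)-\sigma_{\liek}(s)\sigma(p)$ lies in $U_{a+m-1}(\lieg)$ but \emph{not} in $U(\lieg)_{[m-1]}$, because the commutators that appear are of the form $[\liek,\liep]\subset\liep$ and preserve $\liep$-degree. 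Already for $q=k_1k_2p$ one finds $\sigma(k_1k_2p)=\sigma_\liek(k_1k_2)\,p-\tfrac12 k_1[k_2,p]-\tfrac12 k_2[k_1,p]+\tfrac16\bigl([k_2,[k_1,p]]+[k_1,[k_2,p]]\bigr)$, and each correction lies in $U(\liek)\cdot\liep$; since $[k_i,p]\in\liep$ has a nonzero $\liea$-component in general, these terms \emph{do} contribute to the $Z^m$-coefficient after applying $P$. What is true is that their $Z^m$-contribution lands in $U_{a-1}(\liek)$, a lower piece of the filtration of $U(\liek)$; to exploit that you would need to set up an induction in $a$, and to close it you would need the a priori Kostant-degree estimate of Theorem \ref{degP(u)}---which is exactly the kind of nontrivial input one cannot wave at.

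The clean repair is to bypass $\sigma:S(\lieg)\to U(\lieg)$ and use instead the $K$-linear isomorphism $\varphi:U(\liek)\otimes S(\liep)\to U(\lieg)$, $\varphi(u\otimes p)=u\,\sigma(p)$, as the paper does in Theorem \ref{degP(u)}. Then $P(\varphi(u\otimes p))=u\,P(\sigma(p))$ by left $U(\liek)$-linearity, and by Lemma \ref{P(U_n)} the $Z^m$-coefficient of $P(\sigma(p))$ for $p\in S_m(\liep)$ is a \emph{scalar}: the coefficient of $Z^m$ in $p|_\liea$. Build $\widetilde u\in(U(\liek)\otimes S_m(\liep))^K$ by pairing each $\gamma$-isotypic piece of $b_m$ in $U(\liek)$ directly with the copy of $\gamma$ in $H_{d(\gamma)}(\liep)$, padded by $(Z^2)^{(m-d(\gamma))/2}\in S(\liep)^K$, exactly as you propose; then $P(\varphi(\widetilde u))$ has leading term a nonzero multiple of $b_m\otimes Z^m$, with no symmetrization corrections. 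The one point of substance you should still state explicitly is the nonvanishing of that multiplier, which amounts to the nondegeneracy of the restriction $H_{d(\gamma)}(\liep)^M\to S(\liea)$ from Kostant--Rallis.
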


\smallskip

Last proposition suggests using an inductive argument to prove
Theorem \ref{main}. To do this it is sufficient to establish
the following theorem. In fact, in Proposition \ref{main''} below
we prove that Theorem \ref{main'} implies Theorem \ref{main}.

\begin{thm}\label{main'} If $b=b_m\otimes Z^m+\cdots+b_0\in
B$ and $b_m \ne 0$, then $d(b_m)\le m$ and its leading term
$b_m\otimes Z^m\in\big(U(\liek)^M\otimes U(\liea)\big)^W$.
\end{thm}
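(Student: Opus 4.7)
The plan is to read both conclusions off the leading-$n$ behavior of the defining congruences \eqref{eqB}. Writing $b = b_m \otimes Z^m + \cdots + b_0$ with $b_j \in U(\liek)^M$, and recalling that $Y_\alpha \in \liet \subset \liem$ commutes with every $b_j$, substitution into \eqref{eqB} expresses both sides as polynomials in $n$ (with $U(\liek)$-coefficients) multiplying $E_\alpha^n$ on opposite sides. The coefficient of the top power $n^m$ is $b_m$ on the left and $(-1)^m b_m$ on the right. Extracting this leading behavior modulo $U(\liek)\liem^+$ yields, for any fixed simple $\alpha \in P_+$ and all sufficiently large $n$, a congruence of the form
\begin{equation*}
E_\alpha^n \, b_m \equiv (-1)^m \, b_m \, E_\alpha^n \pmod{U(\liek)\liem^+},
\end{equation*}
together with a cascade of lower-order identities in $n$ relating the subleading $b_j$. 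The strategy is to show that this single leading congruence, plus the membership $b \in B$, forces both the Kostant degree bound and the $W$-equivariance of the leading term.

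First I would establish the a priori bound $d(b_m) \le m$ using the \emph{degree property} introduced in Section \ref{Kostant}. The argument is by contradiction: if some $\gamma \in \Gamma_1$ with $d(\gamma) > m$ occurs in $b_m$, then the displayed congruence, applied for infinitely many $n$ against the a priori structural bounds the degree property yields on the image of $E_\alpha^n$ acting on $\gamma$-isotypic components modulo $U(\liek)\liem^+$, produces a numerical inconsistency. In the paper's scheme this is precisely the content of Theorem \ref{Btilde2}, and Proposition \ref{Btilde4} converts this degree bound into a reduction of Theorem \ref{main} to Theorem \ref{Btilde2}.

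Next, with $d(b_m) \le m$ in hand, the $W$-invariance of the leading term reduces to the sign identity $s(b_m) = (-1)^m b_m$, where $W = \{1, s\}$ and $s$ acts on $\liea = \CC Z$ by $Z \mapsto -Z$. To extract this sign I would apply the transversality results of Section \ref{trans.}: they pin down the image of left and right multiplication by $E_\alpha^n$ on the $K$-isotypic pieces of $U(\liek)/U(\liek)\liem^+$ tightly enough so that the displayed congruence can be read, isotypic component by isotypic component in $\Gamma_1$ of degree at most $m$, as an equality between $b_m$ and $(-1)^m s(b_m)$. The degree bound is what guarantees we remain in the finitely many $K$-types for which the Section \ref{trans.} transversality is sharp enough to isolate this sign.

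The main obstacle is unquestionably the Kostant degree estimate. In the classical rank one cases a direct case analysis is available, but for $F_4$ the branching $K \downarrow M$ and the presence of the double root $2\lambda$ in the restricted system $BC_1$ complicate the combinatorics considerably. Introducing the degree property in Section \ref{Kostant} is precisely what allows one to bypass a brute-force classification and use the streamlined transversality of Section \ref{trans.} in place of the heavier machinery of Section 4 of \cite{BCT}; verifying that $F_4$ does satisfy the degree property, and that the resulting estimates close the argument, is where the bulk of the work concentrates.
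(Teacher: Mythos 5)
Your proposed route and the paper's proof diverge at the very first step, and the divergence matters.

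The central difficulty is your extraction of a ``leading congruence'' $E_\alpha^n b_m \equiv (-1)^m b_m E_\alpha^n$ from \eqref{eqB}. You treat the congruence as a polynomial identity in $n$ and read off the coefficient of $n^m$. But $E_\alpha^n$ is \emph{not} polynomial in $n$: for each $n$ it is a genuinely different element of $U(\liek)$, of degree growing with $n$, so the family of congruences \eqref{eqB} is a sequence of relations in growing filtration levels of $U(\liek)/U(\liek)\liem^+$, not a single relation with polynomial coefficients from which one can isolate the top power. There is no limiting or Vandermonde-type argument available here, and the claimed leading congruence simply does not follow from \eqref{eqB} as stated. The paper deals with exactly this obstruction by first performing the substitution $c(x)=b(x+H-1)$ (equation \eqref{c2}) and then invoking Theorem~\ref{thm1}, which replaces the $n$-indexed family by an equivalent \emph{triangular} system involving discrete derivatives $c^{(n)}$ and the derivation $\dot E=\operatorname{ad}(E)$. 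The usable consequences are of the form $\dot E^{m+1}(c_j)\equiv 0$ (Theorem~\ref{Ec}) and $\dot E^{2m+1-j}(b_j)\equiv 0$ (Corollary~\ref{Eb}), not a two-sided multiplication identity by $E^n$. Your plan's first step would therefore have to be replaced wholesale by this triangularization machinery.

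For the rest of the argument you essentially defer to the paper: you note that the Kostant degree bound is ``precisely the content of Theorem~\ref{Btilde2}'' and that ``Proposition~\ref{Btilde4} converts this degree bound into a reduction,'' which is an accurate description of the paper's logical structure but is not a proof. You also omit the chain Lemma~\ref{casimir}, Proposition~\ref{degprop1}, Lemma~\ref{degprop2}, Proposition~\ref{Btilde1}, Corollary~\ref{Btilde3} by which the paper bootstraps the degree property from the known inclusion $P(U(\lieg)^K)\subset B$ via the Casimir element $\omega$; without this the degree property is not available for an arbitrary $b\in B$. Finally, on the $W$-invariance: your intuition that it comes down to showing $m$ is even is right (the remark after Theorem~\ref{main'} explains that the nontrivial $W$-element acts by the Cartan involution, which is trivial on $\liek$, so $s(b_m\otimes Z^m)=(-1)^m\,b_m\otimes Z^m$), but the evenness of $m$ is not extracted ``isotypic component by component'' from a sign in the leading congruence --- it comes out of Corollary~\ref{Btilde3} and Proposition~\ref{Btilde4} as a by-product of the $\widetilde B$-vanishing theorem. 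So the headline conclusions you aim at are the right ones, but the mechanism you sketch for reaching them is either invalid (the leading-term extraction) or is a restatement of the paper's results rather than a proof of them.
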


\noindent {\bf{Remark:}} In F$_4$ the non trivial element of $W$ can be
represented by an element in $M'_o$ which acts on $\lieg$ as the
Cartan involution. Hence, to prove that the leading term $b_m\otimes
Z^m$ is $W$-invariant it is enough to show that $m$ is even.

\begin{prop}\label{main''}
Theorem \ref{main'} implies Theorem \ref{main}.
\end{prop}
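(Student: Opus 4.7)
The plan is to establish the reverse inclusion $B\subset P(U(\lieg)^K)$ by induction on the degree $m$ of a nonzero element $b=b_m\otimes Z^m+\cdots+b_0$ of $B$; the other inclusion $P(U(\lieg)^K)\subset B$ is already furnished by Corollary \ref{cor6}.

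For the base case $m=0$, I would apply Theorem \ref{main'} to the nonzero element $b=b_0\in B$ to obtain $d(b_0)\le 0$. Since the only $\gamma\in\Gamma$ of Kostant degree zero is the trivial representation (realized by the constant harmonic polynomials on $\liep$), this forces $b_0\in U(\liek)^K$. Now $U(\liek)^K\subset U(\lieg)^K$, and via the embedding $U(\liek)\hookrightarrow U(\liek)\otimes U(\liea)$ the projection $P$ acts as the identity on $U(\liek)$, so $b_0=P(b_0)\in P(U(\lieg)^K)$.

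For the inductive step, assume $m\ge 1$ and that every element of $B$ of degree strictly less than $m$ lies in $P(U(\lieg)^K)$. Given $b\in B$ of degree $m$, Theorem \ref{main'} provides the two ingredients $d(b_m)\le m$ and $\widetilde b=b_m\otimes Z^m\in\bigl(U(\liek)^M\otimes U(\liea)\bigr)^W$. These are precisely the hypotheses of Proposition \ref{cor9}, which then supplies $u\in U(\lieg)^K$ such that $\widetilde b$ is the leading term of $P(u)$. Setting $b'=b-P(u)$, Corollary \ref{cor6} gives $P(u)\in B$ and hence $b'\in B$; since the leading terms of $b$ and $P(u)$ coincide, $\deg b'<m$. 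The inductive hypothesis then yields $b'\in P(U(\lieg)^K)$, and therefore $b=b'+P(u)\in P(U(\lieg)^K)$, which closes the induction.

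There is no genuine obstacle in this reduction: it is a bookkeeping induction that chains Theorem \ref{main'} with Proposition \ref{cor9} by matching leading terms. The entire weight of the F$_4$-specific argument is displaced onto Theorem \ref{main'}, which is exactly why the substantive content of the proof must sit in the transversality analysis of Section \ref{trans.} and the Kostant-degree estimates of Section \ref{Kostant} that feed into the proof of that theorem.
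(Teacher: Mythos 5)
Your proof is correct and follows essentially the same line as the paper's: base case $m=0$ via Theorem~\ref{main'} forcing $d(b_0)=0$ hence $b_0\in U(\liek)^K$, and the inductive step combining Theorem~\ref{main'} with Proposition~\ref{cor9} to strip off the leading term. The only cosmetic difference is that you identify the degree-zero class as the trivial representation, whereas the paper phrases it in terms of constant harmonic polynomials; these are the same observation.
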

\begin{proof}Assume that Theorem \ref{main'} holds. From
Corollary \ref{cor6} we know that $P(U(\lieg)^K)\subset B$.
Then let us prove by induction on the degree $m$ of $b\in B$, that
$B\subset P(U(\lieg)^K)$. If $m=0$ we have $b=b_0\in U(\liek)^M$ and
Theorem \ref{main'} implies that $d(b_0)=0$.
If $\gamma\in\Gamma$ and $d(\gamma)=0$ then $\gamma$ can be realized by
constant polynomial functions on $\liep$ and these fuctions are
$K$-invariant.
Thus $b_0\in U(\liek)^K$ and therefore $b=b_0=P(b_0)\in P(U(\lieg)^K)$.

If $b\in B$ and $m>0$, from
Theorem \ref{main'} and Proposition \ref{cor9} we know that there
exists $v\in U(\lieg)^K$ such that $\widetilde{P(v)}=\widetilde b$.
Then $b-P(v)$ lies in $B$ and the degree of $b-P(v)$ is strictly
less than $m$.
Hence, by the induction hypothesis, there exists $u\in U(\lieg)^K$ such
that $P(u)=b-P(v)$ and  $b=P(u+v)\in P(U(\lieg)^K)$.
This completes the induction argument and we obtain that
$B\subset P(U(\lieg)^K)$, as we wanted to prove.\qed
\end{proof}

\smallskip

In view of this last result the main objective of this paper is to
prove Theorem \ref{main'} when $G_0$ is locally isomorphic to F$_4$.

\section{The equations defining $B$}\label{sec.eqE}

From now on we shall write $u\equiv v$ instead of $u\equiv v \mod
\big(U(\liek)\liem^+\big)$, for any $u,v\in U(\liek)$. Next
result was proved in Lemma 29 of \cite{Ti} for $G_o$ of arbitrary rank.

\begin{lem}\label{defc} Let $\alpha\in P_+$ be a simple root. Set
$H_\alpha=Y_\alpha+Z_\alpha$ where $Y_\alpha\in\liet$,\, $Z_\alpha\in\liea$
and let $c=\alpha(Y_\alpha)$. If $\lambda=\alpha|_\liea$ and
$m(\lambda)$ is the multiplicity of $\lambda$, then $c=1$ when
$2\lambda$ is not a restricted root and $m(\lambda)$ is even, or
when $m(\lambda)$ is odd, and $c=\frac32$ when $2\lambda$ is a
restricted root and $m(\lambda)$ is even.
\end{lem}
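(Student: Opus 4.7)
The plan is to reduce the computation of $c$ to a Killing-form ratio and then to pin the ratio down by analysing the rank-two subsystem of $\Delta$ generated by $\alpha$ and $\beta:=-\theta\alpha$.

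Starting from $\alpha(H_\alpha)=2$, the decomposition $H_\alpha=Y_\alpha+Z_\alpha$ and the identity $\alpha|_{\liea}=\lambda$ give $c = 2-\lambda(Z_\alpha)$. Using the Killing-form identification $\lieh\cong\lieh^*$ (under which $H_\alpha$ corresponds to $2\alpha/\langle\alpha,\alpha\rangle$) together with the orthogonality of $\liet$ and $\liea$, the element $Z_\alpha\in\liea$ corresponds to $(2/\langle\alpha,\alpha\rangle)\lambda\in\liea^*$, and so
\[
c \;=\; 2 \;-\; \frac{2\langle\lambda,\lambda\rangle}{\langle\alpha,\alpha\rangle}.
\]
The task is thus to show that $\langle\alpha,\alpha\rangle/\langle\lambda,\lambda\rangle$ equals $2$ in cases (a) and (b) and $4$ in case (c).

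I would then set $\beta:=-\theta\alpha$. Since $\theta$ is a Cartan involution permuting $\Delta$ that fixes $\liet$ pointwise and acts as $-1$ on $\liea$, the functional $\beta$ lies in $\Delta_\lambda$, has the same Killing norm as $\alpha$, and has $\liet$-part $-\alpha|_{\liet}$. The hypothesis $Y_\alpha\ne 0$, which the text guarantees for every simple root $\alpha\in P_+$, forces $\alpha|_{\liet}\ne 0$ and so $\beta\ne\alpha$. A short computation yields $\langle\alpha,\beta\rangle=2\langle\lambda,\lambda\rangle-\langle\alpha,\alpha\rangle$, so the Cartan integer of the ordered pair $(\alpha,\beta)$ equals $2-2c$, and $\alpha,\beta$ generate a rank-two subsystem of type $A_1\times A_1$ or $A_2$ according to whether $\alpha\pm\beta$ lie in $\Delta$.

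The key observation is that $\alpha+\beta$ has zero $\liet$-part and restricts to $2\lambda$, while $\alpha-\beta=2\,\alpha|_{\liet}$ vanishes on $\liea$. In case (c), $\Delta_{2\lambda}\ne\emptyset$ and the involution $-\theta$ on $\Delta_{2\lambda}$ must admit a fixed point (which has to be $\alpha+\beta$, as it is the unique functional with restriction $2\lambda$ and trivial $\liet$-part); hence $\{\alpha,\beta,\alpha+\beta\}$ spans an $A_2$-subsystem of $\Delta$ of common length, and combining the $A_2$-relation $\langle\alpha+\beta,\alpha+\beta\rangle=\langle\alpha,\alpha\rangle$ with the direct evaluation $\langle\alpha+\beta,\alpha+\beta\rangle=4\langle\lambda,\lambda\rangle$ yields $\langle\alpha,\alpha\rangle=4\langle\lambda,\lambda\rangle$ and thus $c=3/2$. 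In cases (a) and (b) the analogous argument (trivially when $\Delta_{2\lambda}=\emptyset$) shows that $\alpha+\beta\notin\Delta$; once one also rules out $\alpha-\beta\in\Delta$, the pair $\{\alpha,\beta\}$ is orthogonal, giving $\langle\alpha,\alpha\rangle=2\langle\lambda,\lambda\rangle$ and $c=1$.

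The main obstacle is establishing the fixed-point existence for $-\theta$ on $\Delta_{2\lambda}$ in case (c), and dually showing that $2\,\alpha|_{\liet}$ is not a root of $\liem$ in cases (a) and (b). For the real rank-one groups relevant to the present paper these reduce to the numerical fact that $m(2\lambda)$ is odd in all $BC_1$ cases ($1$, $3$, and $7$ for $\SU(n,1)$, $\Sp(n,1)$, and $F_{4(-20)}$, respectively) together with a direct check inside $\liem$; in the general-rank statement proved in Lemma 29 of \cite{Ti}, these facts are deduced from a finer analysis of how the $\liet$-weights appearing in $\lieg_\lambda$ interact with the root system of $\liem$.
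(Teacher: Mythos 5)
The paper itself does not reprove this lemma; it just cites Lemma~29 of \cite{Ti}, so you are offering an independent argument and there is no in-paper proof to compare against. Your opening reduction is correct and clean: $c=2-\alpha(Z_\alpha)=2-2\langle\lambda,\lambda\rangle/\langle\alpha,\alpha\rangle$, and introducing $\beta=-\theta\alpha$ turns everything into the Cartan integer $n(\alpha,\beta)=2-2c\in\{-1,0,1\}$. The parity observation is also sound: $-\theta$ is an involution of $\Delta_{2\lambda}$, whose only possible fixed point is the functional $\alpha+\beta$ (the unique one with restriction $2\lambda$ to $\liea$ and trivial $\liet$-part), so $\alpha+\beta\in\Delta$ if and only if $m(2\lambda)$ is odd. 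And once one knows $\alpha+\beta\in\Delta$ \emph{and} $\alpha-\beta\notin\Delta$, the $\alpha$-string through $\beta$ has $p=0$, $q\ge1$, forcing $n(\beta,\alpha)\le-1$, hence $=-1$ and common length automatically; so the $A_2$ claim you make in case~(c) is fine conditionally.

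The gap you flag yourself is genuine and it is where the content of the lemma lives. Two things are left unproved: (i)~that $m(2\lambda)$ is odd exactly in case~(c), and (ii)~that $\alpha-\beta=2\alpha|_\liet$ is never a root of $\liem$ (equivalently $\langle\alpha,\beta\rangle\le0$, which excludes the \emph{a priori} possible value $c=1/2$). Falling back on a ``direct check'' plus a citation of \cite{Ti} leaves the proposal a sketch rather than a proof. Two remarks would tighten it up considerably. First, in case~(b) you do not need any check at all: if $m(\lambda)$ is odd then by the same parity argument the midpoint $(\alpha+\beta)/2=\alpha|_\liea$ is a root, and since $\Delta$ is reduced $\alpha+\beta=2\cdot\alpha|_\liea$ cannot also be a root; so $\alpha+\beta\notin\Delta$ is automatic there, whatever $m(2\lambda)$ is. Second, for~(ii), simplicity of $\alpha$ together with $\beta\in P_+\subset\Delta^+$ already rules out $\alpha-\beta\in\Delta^+$ (else $\alpha=\beta+(\alpha-\beta)$); what remains to exclude is $\beta-\alpha\in P_-$, i.e.\ $-2\alpha|_\liet$ a positive root of $\liem$, and this --- like~(i) --- is a statement internal to the rank-one subalgebra $\lieg(\lambda)$, so the verification really is a finite list. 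Stating those reductions explicitly, rather than gesturing at ``a finer analysis'' in \cite{Ti}, would turn your outline into a complete argument.
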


In particular, if $G_o$ is locally isomorphic to
F$_4$ we have $c=\frac32$. To simplify the notation set $E=E_\alpha$,
$Y=Y_\alpha$ and $Z=Z_\alpha$ for any simple root $\alpha\in P_+$.
Notice that $[E,Y]=cE$, where $c$ is as
in Lemma \ref{defc}. Also, since $E_\alpha=X_{-\alpha}+\theta X_{-\alpha}$
and $\alpha$ is a simple root in $P_+$ it follows that $E$ is $\liem^+$- dominant.

\smallskip

We shall identify $U(\liek)\otimes U(\liea)$ with the polynomial ring in
one variable $U(\liek)[x]$, replacing $Z$ by the
indeterminate $x$. To study equation \eqref{eqB} we change
$b(x)\in U(\liek)[x]$ by $c(x)\in U(\liek)[x]$ defined by
\begin{equation}\label{c2}
c(x)=b(x+H-1),
\end{equation}
where $H$ is an appropriate vector in $\liet$ to be chosen later,
depending on the simple root $\alpha\in P_+$ and such that
$[H,E]=\frac12 E$ (see \eqref{H}). Now, if $\widetilde Y=Y+H$ we have
$[E,\widetilde Y]=E$. Then $b(x)\in U(\liek)[x]$ satisfies \eqref{eqB}
if and only if $c(x)\in U(\liek)[x]$ satisfies
\begin{equation}\label{ec5}
E^nc(n-\widetilde Y)\equiv c(-n-\widetilde Y)E^n
\end{equation}
\noindent for all $n\in\NN$. Note that \eqref{ec5} is an
equation in the noncommutative ring $U(\liek)$.

\smallskip

Now, if $p$ is a polynomial in one indeterminate $x$ with
coefficients in a ring let $p^{(n)}$ denote the n-th discrete
derivative of $p$. That is, $p^{(n)}(x)=\sum^n_{j=0} (-1)^j {\binom n
j}p(x+{\frac n2}-j)$. In particular, if $p=p_mx^m+\cdots+p_0$ we have
\begin{equation*}
p^{(n)}(x)=\begin{cases} 0,&\text{if $n>m$}\\ m! p_m ,&\text{if
$n=m$}.
\end{cases}
\end{equation*}

Also, if $X\in\liek$ we shall denote with $\dot X$ the derivation of
$U(\liek)$ induced by ad($X$). Moreover if $D$ is a derivation of
$U(\liek)$ we shall denote with the same symbol the unique
derivation of $U(\liek)[x]$ which extends $D$ and such that $Dx=0$.
Thus for $b\in U(\liek)[x]$ and $b=b_mx^m+\cdots+b_0$, we have
$Db=(Db_m)x^m+\cdots+(Db_0)$. Observe that these derivations commute
with the operation of taking the discrete derivative in
$U(\liek)[x]$.

\smallskip

Next theorem gives a triangularized version of the system
(\ref{ec5}), and in turn, of the system (\ref{eqB}) that defines the
algebra $B$. A proof of it is given in \cite{BC}, where the system
\eqref{ec5} is studied in a more abstract setting and in particular the
LU-decomposition of its coefficient matrix is given.

\begin{thm}\label{thm1} Let $c\in U(\liek)[x]$. Then the following
systems of equations are equivalent:

\noindent(i) $E^n c(n-\widetilde Y)\equiv c(-n-\widetilde Y)E^n$,
$(n\in\NN_0)$;

\smallskip

\noindent (ii) $\dot E^{n+1}\big(c^{(n)}\big)(\frac n2+1-\widetilde
Y)+\dot E^n\big(c^{(n+1)}\big)(\frac n2-\frac12-\widetilde Y)E\equiv
0$, $(n\in\NN_0)$.

\smallskip

\noindent Moreover, if $c\in U(\liek)[x]$ is a solution of one of the above
systems, then for all $\ell,n\in\NN_0$ we have

\smallskip

\noindent (iii) $(-1)^n\dot E^{\ell}\big(c^{(n)}\big)(-\frac
n2+\ell-\widetilde Y)E^n-(-1)^\ell\dot
E^n\big(c^{(\ell)}\big)(-\frac \ell2+n-\widetilde Y)E^\ell\equiv 0$.
\end{thm}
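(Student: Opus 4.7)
The plan is to reduce everything to the single commutation relation $[E,\widetilde Y]=E$, together with the $\liem^+$-dominance $[\liem^+,E]=0$ which ensures that right multiplication by $E$ descends to the quotient $U(\liek)/U(\liek)\liem^+$ and that the derivation $\dot E$ respects this quotient. From $[E,\widetilde Y]=E$ one gets $E^n\widetilde Y=(\widetilde Y+n)E^n$ and hence $E^n f(\widetilde Y)=f(\widetilde Y+n)E^n$ for any polynomial $f$; combined with the Leibniz expansion
$$E^n a = \sum_{k=0}^{n}\binom{n}{k}\dot E^k(a)\,E^{n-k}\qquad(a\in U(\liek)),$$
this supplies the only algebraic tools needed to rewrite (i) so that every $E$ sits to the right of the coefficients of $c$.

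To establish (i)$\Leftrightarrow$(ii), I would write $c=\sum_j c_j x^j$, substitute $x\mapsto\pm n-\widetilde Y$ in (i), move each $E$ past the $c_j$'s via the Leibniz expansion, and collect like powers of $E$. The result is a system of congruences indexed by $n\in\NN_0$ whose unknowns are the elements $\dot E^k(c_j)$ evaluated at integer shifts of $\widetilde Y$. Recognizing that the alternating binomial sums arising on both sides match the definition
$$c^{(n)}(a)=\sum_{j=0}^{n}(-1)^j\binom{n}{j}c\!\left(\tfrac{n}{2}-j+a\right)$$
of the discrete derivative suggests adopting the $c^{(n)}$ as the natural unknowns; after this change of variables the coefficient matrix of the system becomes lower triangular, and its LU-decomposition (carried out in \cite{BC} in an abstract noncommutative setting) extracts (ii) as precisely the relations lying along the subdiagonal. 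Running the decomposition in reverse recovers (i) from (ii).

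For (iii), I would argue by induction on $\min(n,\ell)$. When $n=0$ or $\ell=0$ the relation reduces, after a shift of $\widetilde Y$ and use of the definition of $c^{(n)}$, to a single instance of (ii). For the inductive step, (ii) at the appropriate index allows one to trade $\dot E^{n+1}(c^{(n)})$ for $-\dot E^n(c^{(n+1)})E$ modulo $U(\liek)\liem^+$; iterating this trade $\ell$ times, and interchanging the roles of $n$ and $\ell$, produces exactly the antisymmetric pairing demanded by (iii).

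The main obstacle I anticipate is the combinatorial bookkeeping in the (i)$\Leftrightarrow$(ii) step: the exact arithmetic shifts $\tfrac{n}{2}+1-\widetilde Y$ and $\tfrac{n}{2}-\tfrac12-\widetilde Y$ appearing in (ii), together with the complete cancellation of all off-diagonal contributions, only emerge after an explicit triangularization of the full infinite system. Since this calculation is the central content of \cite{BC}, my proposal is to invoke it at that point rather than reconstruct it here.
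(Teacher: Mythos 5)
Your proposal matches the paper's treatment: the paper itself does not prove Theorem \ref{thm1} in situ, but defers entirely to \cite{BC}, where the system \eqref{ec5} is studied abstractly and triangularized via an LU-decomposition of its coefficient matrix — exactly the reduction you identify, and exactly the point at which you also propose to invoke \cite{BC}. The preparatory tools you lay out (the rule $E^n f(\widetilde Y)=f(\widetilde Y+n)E^n$ coming from $[E,\widetilde Y]=E$, the Leibniz expansion of $E^n a$, and the compatibility of right multiplication by $E$ and of $\dot E$ with the quotient $U(\liek)/U(\liek)\liem^+$ owing to $[\liem^+,E]=0$) are the correct setup for that reduction, so your approach is faithful to the paper's.
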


\smallskip

Observe that if $c\in U(\liek)[x]$ is of degree $m$ and
$c=c_mx^m+\cdots+c_0$, then all the equations of the system (ii)
corresponding to $n>m$ are trivial, because $c^{(n)}=0$. Moreover
the equation corresponding to $n=m$ reduces to $\dot
E^{m+1}(c_m)\equiv0$, and more generally the equation associated to
$n=j$ only involves the coefficients $c_m,\dots,c_j$. In this sense
the system (ii) is a triangular system of $m+1$ linear equations in
the $m+1$ unknowns $c_m,\dots,c_0$.

\medskip

If $0\neq b(x)\in U(\liek)[x]$ and $c(x)\in U(\liek)[x]$ is defined
by $c(x)=b(x+H-1)$, where $H$ is as in \eqref{H}, we find it
convenient to write, in a unique way, $b=\sum_{j=0}^m b_jx^j$ with
$b_j\in U(\liek)$, $b_m\ne0$, and $c=\sum_{j=0}^m c_j\varphi_j$
where $c_j\in U(\liek)$ and $\{\varphi_n\}_{n\ge0}$ is the basis of
$\CC[x]$ defined by,
\begin{alignat}{2}
\varphi_0&=1, &&\qquad\qquad\ \tag{i}\\
\varphi_n^{(1)}&=\varphi_{n-1} && \text{\qquad if } n\geq1,\tag{ii}\\
\varphi_n(0)&=0  &&\text{\qquad if }n\ge1.\tag{iii}
\end{alignat}
Moreover it is easy to prove that such a family is given by
\begin{equation}\label{phi}
\varphi_n(x)={\textstyle\frac1{n!}x(x+\frac n2-1)(x+\frac
n2-2)\cdots(x-\frac n2+1)},\quad n\ge1.
\end{equation}
Next lemma contains the results of Lemma 3.3 and Lemma 3.5
of \cite{BCT}. Its proof is the same as that of the corresponding
lemmas in \cite{BCT}.
\begin{lem}\label{cb} Let $b=\sum_{j=0}^mb_jx^j\in U(\liek)[x]$ and set
$c(x)=b(x+H-1)$. Then, if $c=\sum_{j=0}^m c_j\varphi_j$ with $c_j\in
U(\liek)$ we have
\begin{equation*}\label{cb1}
c_i=\sum^m_{j=i}b_jt_{ij}\qquad 0\le i\le m,
\end{equation*}
where
\begin{equation*}\label{tij}
t_{ij}=\sum_{k=0}^i(-1)^k\binom ik(H+{\textstyle\frac i2}-1-k)^j.
\end{equation*}
Thus $t_{ij}$ is a polynomial in $H$ of degree $j-i$. Moreover,
$$\dot E^{j-i}(t_{ij})=\left({-\frac12}\right)^{j-i}j!E^{j-i}.$$
\end{lem}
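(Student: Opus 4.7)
The plan is to combine three standard ingredients: (a) recovering the coefficients $c_i$ in the basis $\{\varphi_j\}$ from the discrete derivatives of $c$ at $0$; (b) substituting $c(x)=b(x+H-1)$ and collecting terms; (c) translating the commutation relation $[H,E]=\tfrac12 E$ into an explicit formula for how $\dot E$ acts on polynomials in $H$.

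First I would establish the extraction formula $c_i=c^{(i)}(0)$. From $\varphi_n^{(1)}=\varphi_{n-1}$ for $n\ge 1$, an easy induction gives $\varphi_n^{(i)}=\varphi_{n-i}$ for $i\le n$ and $\varphi_n^{(i)}=0$ for $i>n$. Since $\varphi_0=1$ and $\varphi_k(0)=0$ for $k\ge 1$, evaluating $c^{(i)}=\sum_{j\ge i}c_j\varphi_{j-i}$ at $x=0$ isolates $c_i$. Applying the explicit formula for the discrete derivative gives
\[
c_i=c^{(i)}(0)=\sum_{k=0}^{i}(-1)^k\binom{i}{k}c\!\left(\tfrac{i}{2}-k\right)=\sum_{k=0}^{i}(-1)^k\binom{i}{k}b\!\left(H-1+\tfrac{i}{2}-k\right).
\]
Expanding $b(H-1+\tfrac{i}{2}-k)=\sum_{j=0}^{m}b_j(H+\tfrac{i}{2}-1-k)^{j}$ and interchanging the sums yields $c_i=\sum_{j=0}^{m}b_j\,t_{ij}$ with $t_{ij}$ exactly as defined. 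Recognizing $t_{ij}$ as the $i$-th discrete derivative of $x^{j}$ evaluated at $x=H-1$, I conclude that $t_{ij}=0$ for $i>j$ and that $t_{ij}$ is a polynomial in $H$ of degree $j-i$ with leading coefficient $j!/(j-i)!$ when $i\le j$; hence the sum effectively runs from $j=i$.

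The remaining assertion is the identity $\dot E^{j-i}(t_{ij})=(-\tfrac12)^{j-i}\,j!\,E^{j-i}$. For this I would first use $[H,E]=\tfrac12 E$, equivalently $EH=(H-\tfrac12)E$, to get by induction $E\,H^{n}=(H-\tfrac12)^{n}\,E$. This translates into the key formal identity
\[
\dot E\bigl(p(H)\bigr)=\bigl(\Delta p\bigr)(H)\,E,\qquad \Delta p(H):=p(H-\tfrac12)-p(H),
\]
for any polynomial $p$. Since $\dot E(E)=0$, iterating gives $\dot E^{n}(p(H))=(\Delta^{n}p)(H)\,E^{n}$.

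Finally I apply this to $p=t_{ij}$, a polynomial in $H$ of degree $n:=j-i$ with leading coefficient $j!/(j-i)!$. A short computation (or induction on $n$) shows that for a polynomial $p$ of degree $n$ with leading coefficient $a$, $\Delta^{n}p$ is the constant $(-\tfrac12)^{n}\,n!\,a$. With $a=j!/(j-i)!$ this gives $\Delta^{j-i}t_{ij}=(-\tfrac12)^{j-i}j!$, and hence the desired $\dot E^{j-i}(t_{ij})=(-\tfrac12)^{j-i}\,j!\,E^{j-i}$.

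The only mildly delicate point is the noncommutativity of $H$ and $E$, which is what makes the last formula nontrivial; once the clean rewriting $EH=(H-\tfrac12)E$ is in hand the rest is bookkeeping. All other steps are formal manipulations with the basis $\{\varphi_n\}$ and the discrete-derivative operator, so no essential obstacle is expected.
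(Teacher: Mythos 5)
Your proof is correct. The paper delegates this lemma to Lemmas 3.3 and 3.5 of \cite{BCT}, and your chain of observations---extracting $c_i = c^{(i)}(0)$ from the defining properties of the $\varphi_n$, recognizing $t_{ij}$ as the $i$-th discrete derivative of $x^j$ evaluated at $H-1$ (hence of degree $j-i$ with leading coefficient $j!/(j-i)!$), and reducing $\dot E$ acting on $p(H)$ to the step-$(-\tfrac12)$ difference operator via $EH=(H-\tfrac12)E$ together with $\dot E(E)=0$---is precisely the natural argument such a proof must use.
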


From these results and Theorem \ref{thm1} we obtain the following
theorem and its corollary in the same way as in \cite{BCT}.

\begin{thm}\label{Ec} If $b=b_m\otimes Z^m+\cdots+b_0\in B$, then $\dot
E^{m+1}(c_j)\equiv0$ for all $0\le j\le m$.
\end{thm}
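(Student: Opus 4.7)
The plan is to deduce the congruences $\dot E^{m+1}(c_j)\equiv 0$ from the triangular system of Theorem \ref{thm1} by a downward induction on $j$, following the blueprint of the analogous result in \cite{BCT}. Since $b\in B$ satisfies \eqref{eqB}, the shifted polynomial $c(x)=b(x+H-1)$ satisfies system (i), and hence (ii) and its consequence (iii). Writing $c=\sum_{j=0}^{m}c_j\varphi_j$ and using the recursion $\varphi_n^{(k)}=\varphi_{n-k}$ for $k\le n$ (and $=0$ otherwise), one obtains $c^{(n)}=\sum_{j=n}^{m}c_j\varphi_{j-n}$, so in particular $c^{(m)}=c_m$ and $c^{(n)}=0$ for $n>m$.

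The base case $j=m$ is immediate. Setting $n=m$ in (ii) of Theorem \ref{thm1}, the second summand vanishes because $c^{(m+1)}=0$, and the first reduces to $\dot E^{m+1}(c_m)\,\varphi_0(\cdot)=\dot E^{m+1}(c_m)$; hence $\dot E^{m+1}(c_m)\equiv 0$.

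For the inductive step, assume $\dot E^{m+1}(c_k)\equiv 0$ for every $k>j$. The tool is relation (iii) with $\ell=m+1$: since $c^{(m+1)}=0$, it becomes
$$\dot E^{m+1}\bigl(c^{(n)}\bigr)\bigl(m+1-\tfrac{n}{2}-\widetilde Y\bigr)\,E^{n}\equiv 0,\qquad n\ge 0.$$
Plugging $n=j$ and expanding $c^{(j)}=c_j+\sum_{k>j}c_k\varphi_{k-j}$, the Leibniz rule together with the commutation relations $[\dot E,\widetilde Y]=E$ and $\dot E(E)=0$ transforms the contribution of each $c_k$ (with $k>j$) into a polynomial expression in $E$ and $\widetilde Y$ whose coefficients are $\dot E^{s}(c_k)$ with $s\le m+1$. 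Combining the inductive hypothesis with iterated use of system (ii), and using that both $E$ and $\widetilde Y$ preserve the left ideal $U(\liek)\liem^+$ under right multiplication (because $[\liem^+,E]=0$ by the $\liem^+$-dominance of $E$, and $\widetilde Y\in\liet\subset\liem$ normalizes $\liem^+$), all these cross terms can be absorbed into the ideal, leaving $\dot E^{m+1}(c_j)\,E^{j}\equiv 0$.

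The main obstacle, and the most delicate point, is to pass from $\dot E^{m+1}(c_j)\,E^{j}\equiv 0$ to the desired $\dot E^{m+1}(c_j)\equiv 0$, i.e., to cancel the right factor $E^{j}$. This is to be handled by a weight-space argument in $U(\liek)/U(\liek)\liem^+$ that exploits the $\liem^+$-dominance of $E$ and the $M$-invariance inherited from $b\in U(\liek)^M\otimes U(\liea)$, exactly parallel to the cancellation used in the proof of the corresponding theorem in Section 3 of \cite{BCT}. Once this cancellation is established, the induction closes and the theorem follows.
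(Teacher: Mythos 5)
Your base case ($j=m$ via system (ii)) is correct, and the overall strategy — downward induction on $j$ using (iii) with $\ell=m+1$ — is a reasonable way to attack the theorem. But the execution of the inductive step has a conceptual error. You invoke the Leibniz rule and the relation $[\dot E,\widetilde Y]=E$ to distribute $\dot E^{m+1}$ over the evaluated polynomial, producing terms $\dot E^{s}(c_k)$ with $s\le m+1$ that you then propose to handle by "iterated use of system (ii)". That is a misreading of the notation: the paper defines $\dot E$ on $U(\liek)[x]$ as the unique extension with $\dot E x=0$, so $\dot E^{m+1}\bigl(c^{(j)}\bigr)=\sum_{k\ge j}\dot E^{m+1}(c_k)\varphi_{k-j}$ is computed coefficient-wise \emph{before} substituting $x=m+1-\tfrac j2-\widetilde Y$. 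No Leibniz expansion occurs and no terms $\dot E^{s}(c_k)$ with $s<m+1$ ever appear. The corrected step is in fact simpler: for $k>j$ the induction hypothesis $\dot E^{m+1}(c_k)\equiv 0$, together with the observation you do make (right multiplication by $\widetilde Y\in\liet$ and by $E$ preserves $U(\liek)\liem^+$, since $\liet$ normalizes $\liem^+$ and $[\liem^+,E]=0$), kills all cross terms outright, leaving $\dot E^{m+1}(c_j)E^j\equiv 0$.

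The more serious problem is that you flag the cancellation of $E^j$ as "the main obstacle" and then do not actually carry it out — you only assert that a "weight-space argument exploiting $\liem^+$-dominance and $M$-invariance" will do it. The real mechanism is the elementary Poincar\'e--Birkhoff--Witt cancellation recorded in this paper as Lemma \ref{U(g)l2}: since $\dot E(\liem^+)=0\subset\liem^+$ and $E\notin\liem^+$, the congruence $uE^j\equiv 0\mod U(\liek)\liem^+$ forces $u\equiv 0$. No $M$-invariance or weight-space decomposition is involved. Until that cancellation is made explicit (and the spurious Leibniz step is removed), the proof is incomplete. Note also that the paper's own argument, imported from \cite{BCT}, may well avoid the $E^j$ cancellation entirely by applying $\dot E^{m-n}$ directly to system (ii) and letting the commutator terms cancel; that variant would make the PBW lemma unnecessary here.
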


\begin{cor}\label{Eb} If $b=b_m\otimes Z^m+\cdots+b_0\in B$, then $\dot
E^{2m+1-j}(b_j)\equiv0$ for all $0\le j\le m$.
\end{cor}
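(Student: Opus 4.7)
The plan is to run descending induction on $j$, starting from $j=m$ and going down to $j=0$, using Theorem \ref{Ec} together with the triangular change of variables from Lemma \ref{cb}. The key observation is that the relation $c_i=\sum_{j=i}^m b_j\,t_{ij}$ lets us extract information about $b_j$ from information about $c_j$ once a sufficiently high power of $\dot E$ is applied.

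A preliminary step I would establish is that the derivation $\dot E$ preserves the left ideal $U(\liek)\liem^+$. The remark in the text that $E$ is $\liem^+$-dominant means $[X,E]=0$ for every $X\in\liem^+$, so for $u\in U(\liek)$ and $X\in\liem^+$ one has $\dot E(uX)=(\dot E u)X+u[E,X]=(\dot E u)X\in U(\liek)\liem^+$. Iterating, $\dot E^n$ preserves the ideal for every $n\ge 0$. Consequently Theorem \ref{Ec} upgrades to $\dot E^N(c_j)\equiv 0$ for every $N\ge m+1$ and every $0\le j\le m$.

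For the base case $j=m$, Lemma \ref{cb} gives $c_m=b_m\,t_{mm}$, and since $t_{mm}$ is the $m$-th discrete derivative of $x^m$ evaluated at $H-1$, it equals $m!$. Thus $\dot E^{m+1}(c_m)=m!\,\dot E^{m+1}(b_m)$, and the preliminary step yields $\dot E^{m+1}(b_m)\equiv 0$. For the inductive step I would assume $\dot E^{2m+1-i}(b_i)\equiv 0$ for all $j<i\le m$ and apply $\dot E^{2m+1-j}$ to $c_j=\sum_{i=j}^m b_i\,t_{ji}$. The Leibniz expansion
\[
\dot E^{2m+1-j}(c_j)=\sum_{i=j}^m\sum_{k=0}^{2m+1-j}\binom{2m+1-j}{k}\,\dot E^k(b_i)\,\dot E^{2m+1-j-k}(t_{ji})
\]
is controlled by Lemma \ref{cb}: since $t_{ji}$ has $H$-degree $i-j$ and $\dot E$ lowers $H$-degree by one (because $\dot E H=-\tfrac12 E$ and $\dot E E=0$), the second factor vanishes unless $k\ge 2m+1-i$. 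For $i>j$ the inductive hypothesis, combined with the $\dot E$-invariance of $U(\liek)\liem^+$, kills every such term modulo the ideal. The only surviving contribution is the one with $i=j$ and $k=2m+1-j$, which equals $j!\,\dot E^{2m+1-j}(b_j)$. Together with $\dot E^{2m+1-j}(c_j)\equiv 0$ (valid since $2m+1-j\ge m+1$), this gives $\dot E^{2m+1-j}(b_j)\equiv 0$ and closes the induction.

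The main obstacle is largely conceptual rather than computational: one has to recognize that the exponent $2m+1-j$ is precisely calibrated so that after accounting for the $H$-degree of $t_{ji}$ the Leibniz expansion telescopes into a single diagonal contribution modulo $U(\liek)\liem^+$. The other potential subtlety—the stability of the congruence under $\dot E$—is settled for free by the $\liem^+$-dominance of $E$ that the text has already pointed out.
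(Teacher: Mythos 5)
Your argument is correct, and the ingredients you use — Theorem~\ref{Ec}, the explicit form of $t_{ij}$ from Lemma~\ref{cb}, the $\liem^+$-dominance of $E$ (which gives $\dot E$-invariance of the left ideal and, together with $[\liem^+,\liet]\subset\liem^+$, also stability under right multiplication by polynomials in $H$ and $E$), and the Leibniz rule — are exactly the ones the paper relies on (the proof itself is deferred to the reference for the symplectic case). One small streamlining is possible: since the change of variables $c(x)=b(x+H-1)$ is invertible, one can write directly $b_j=\sum_{i\ge j}c_i\,s_{ji}$ with $s_{ji}\in\CC[H]$ of degree $i-j$; then in the Leibniz expansion of $\dot E^{2m+1-j}(b_j)$ every surviving term has a factor $\dot E^{k}(c_i)$ with $k\ge 2m+1-i\ge m+1$, which is $\equiv 0$ by Theorem~\ref{Ec}, so no descending induction is needed. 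Your inductive version, working from $c_j=\sum_i b_i t_{ji}$ instead, is an equally valid organization of the same computation.
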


Next we rewrite equation (iii) of Theorem \ref{thm1} for later
reference. Given $b=\sum_{j=0}^mb_jx^j\in B$ and $c(x)=b(x+H-1)$ as
above, it follows from Theorem \ref{Ec} that equation (iii) of
Theorem \ref{thm1} is satisfied if $\ell>m$ or $n>m$, and it is
trivial when $\ell=n$. Also note that the equation corresponding to
$(n,\ell)$ is  equivalent to that one corresponding to $(\ell,n)$.

\begin{thm}\label{thm2} Let $b=\sum_{j=0}^mb_jx^j\in U(\liek)[x]$ and
$c(x)=b(x+H-1)$. If $c=\sum_{j=0}^m c_j\varphi_j$ with $c_j\in
U(\liek)$ and $0\le\ell,n$ we set
\begin{equation*}\label{eln}
\begin{split}
\epsilon(\ell,n)&=(-1)^n\sum_{n\le i\le m}\dot
E^{\ell}(c_i)\varphi_{i-n}
(-{\textstyle\frac n2}+\ell-\widetilde Y)E^n\\
&-(-1)^\ell\sum_{\ell\le i\le m}\dot E^n(c_i)\varphi_{i-\ell}
(-{\textstyle\frac\ell2}+n-\widetilde Y)E^\ell.
\end{split}
\end{equation*}
Then, if $b\in B$ we have
$\epsilon(\ell,n)\equiv0\mod(U(\liek)\liem^+)$ for all $0\le\ell,n$.
\end{thm}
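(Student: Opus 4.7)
The plan is to observe that $\epsilon(\ell,n)$ is nothing more than an explicit expansion of equation (iii) of Theorem~\ref{thm1} once we write $c$ in the basis $\{\varphi_j\}_{j\ge 0}$. Thus the argument reduces to two steps: translating the hypothesis $b\in B$ into the corresponding assertion for $c$, and then a direct computation of the discrete derivatives $c^{(n)}$.

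First I would recall that by the change of variable $c(x)=b(x+H-1)$ (equation \eqref{c2}), the hypothesis $b\in B$, i.e. \eqref{eqB} for all simple $\alpha\in P_+$ and $n\in\NN$, is equivalent to the system
\begin{equation*}
E^n c(n-\widetilde Y)\equiv c(-n-\widetilde Y)E^n,\qquad n\in\NN_0,
\end{equation*}
which is precisely statement (i) of Theorem~\ref{thm1}. Hence by the last sentence of Theorem~\ref{thm1}, $c$ satisfies (iii):
\begin{equation*}
(-1)^n\dot E^{\ell}\bigl(c^{(n)}\bigr)\bigl(-{\textstyle\frac n2}+\ell-\widetilde Y\bigr)E^n-(-1)^\ell\dot E^n\bigl(c^{(\ell)}\bigr)\bigl(-{\textstyle\frac \ell2}+n-\widetilde Y\bigr)E^\ell\equiv 0
\end{equation*}
for all $\ell,n\in\NN_0$.

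Next I would compute $c^{(n)}$ explicitly. Defining properties (i)--(iii) of $\varphi_n$ give, by an immediate induction on $n$, that $\varphi_j^{(n)}=\varphi_{j-n}$ if $n\le j$ and $\varphi_j^{(n)}=0$ if $n>j$ (the vanishing case uses $\varphi_0=1$, whose discrete derivative is $0$). Since $\dot E$ commutes with the discrete derivative and the coefficients $c_i\in U(\liek)$ are constants with respect to $x$, one obtains
\begin{equation*}
\dot E^{\ell}\bigl(c^{(n)}\bigr)=\sum_{n\le i\le m}\dot E^{\ell}(c_i)\,\varphi_{i-n},
\end{equation*}
and analogously with $\ell$ and $n$ interchanged. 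Evaluating at $x=-\frac n2+\ell-\widetilde Y$ (respectively $x=-\frac\ell2+n-\widetilde Y$) and substituting into (iii) yields exactly $\epsilon(\ell,n)\equiv 0\mod(U(\liek)\liem^+)$.

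I do not expect any real obstacle here: the content of the theorem is just a change of basis from monomials to $\{\varphi_j\}$ in equation (iii) of Theorem~\ref{thm1}, and the only thing to verify is the effect of the iterated discrete derivative on this basis, which is built into the definition of the $\varphi_j$. The cases $\ell>m$ or $n>m$ (where one or both sums are empty) and $\ell=n$ (where the two terms cancel) give $\epsilon(\ell,n)=0$ tautologically, consistent with the remark preceding the statement.
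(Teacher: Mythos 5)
Your argument is correct and follows the same route as the paper: the paper's proof likewise just cites equation (iii) of Theorem~\ref{thm1} together with the identity $c^{(k)}=\sum_{i=k}^m c_i\varphi_{i-k}$, which is exactly what you derive from the defining properties of the $\varphi_j$ and the commutation of $\dot E$ with the discrete derivative. You have simply spelled out the details that the paper leaves implicit.
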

\begin{proof} The assertion follows from equation (iii) of Theorem \ref{thm1}
and the fact that $c^{(k)}=\sum_{i=k}^m c_i\varphi_{i-k}$ for all
$0\le k\le m$.
\end{proof}

\section{The group F$_4$}\label{F4}

Let $G_o$ be locally isomorphic to F$_4$. The Dynkin-Satake diagram of $\lieg$,
the complexification of the Lie algebra of $G_o$, is

\noindent

\

\setlength{\unitlength}{.75mm}
\begin{picture}(0,15)
\put(59,14){$\bullet$}
\put(62,15.5){\line(1,0){8}} \put(58,10){\scriptsize$\alpha_4$}
\put(71,14){$\bullet$}
\put(74,15.85){\line(1,0){6}}
\put(74,15.15){\line(1,0){6}}
\put(79,14){$>$} \put(70,10){\scriptsize$\alpha_3$}
\put(83,14){$\bullet$}
\put(86,15.5){\line(1,0){8}} \put(82,10){\scriptsize$\alpha_2$}
\put(95,14){$\circ$}
\put(94,10){\scriptsize$\alpha_1$}
\end{picture}

\vspace{-.7cm}

\noindent We can choose an orthonormal basis
$\{\epsilon_i\}_{i=1}^{4}$ of $\lieh_\RR^*$  such that
$\alpha_4=\epsilon_2-\epsilon_3$, $\alpha_3=\epsilon_3-\epsilon_4$,
$\alpha_2=\epsilon_4$, $\alpha_1=\frac12(\epsilon_1-\epsilon_2-\epsilon_3-\epsilon_4)$.
Moreover, if $\sigma$ denotes the conjugation of $\lieg$ with respect to $\lieg_o$,
then $\epsilon_1^\sigma=\epsilon_1$ and $\epsilon_i^\sigma=-\epsilon_i$ if $2\le i\le 4$.
Also, we have $\epsilon_1^\theta=-\epsilon_1$ and
$\epsilon_i^\theta=\epsilon_i$ for $2\le i\le 4$. From the diagram
it follows that
\begin{align*}
\begin{split}
\Delta^+(\lieg,\lieh)=&\{\epsilon_i:1\le
i\le4\}\cup\{\epsilon_i\pm\epsilon_j:1\le i<j\le4\}\\
&\cup\{\textstyle\frac12(\epsilon_1\pm\epsilon_2\pm\epsilon_3\pm\epsilon_4)\},\\
P_+=&\{\epsilon_1,\epsilon_1\pm\epsilon_2,\epsilon_1\pm\epsilon_3,\epsilon_1
\pm\epsilon_4\}\cup
\{\textstyle\frac12(\epsilon_1\pm\epsilon_2\pm\epsilon_3\pm\epsilon_4)\},\\
P_-=&\{\epsilon_2,\epsilon_3,\epsilon_4,\epsilon_2\pm\epsilon_3,\epsilon_2\pm
\epsilon_4,\epsilon_3\pm\epsilon_4\},
\end{split}
\end{align*}
where the signs may be chosen independently. Here $P_-$ denotes the
set of roots in $\Delta^+(\lieg,\lieh)$ that vanish on $\liea$.
Hence $P_-=\Delta^+(\liem,\liet)$  and from this it follows that
$\liem\simeq\lieso(7,\CC)$.

We have $\liet=\ker(\epsilon_1)$ and $\epsilon_1$ is the only root in $P_+$
that vanishes on $\liet$. If we set $\mu=\epsilon_1$, then $H_\mu=Z_\mu\in\liea$.
Choose the root vector $X_\mu$ so that $\langle X_\mu,\theta X_\mu\rangle=2$ and define
$X_{-\mu}=\theta X_\mu$. Then the
ordered set $\{H_\mu,X_\mu,X_{-\mu}\}$ is an s-triple. This choice
characterizes $X_\mu$ up to a sign. On the other hand, it can be established that
for any choice of nonzero root vectors $X_{\alpha_1}$ and $X_{-\alpha_1}$ we have
$[X_\mu,\theta X_{\alpha_1}]=tX_{\alpha_1}$ and
$[X_\mu,X_{-\alpha_1}]=-t\theta X_{-\alpha_1}$ with $t^2=1$.
Then normalize $X_\mu$ so that,
\begin{equation}\label{Xmu}
[X_\mu,\theta X_{\alpha_1}]=-X_{\alpha_1} \quad {\text{and}} \quad
[X_\mu,X_{-\alpha_1}]=\theta X_{-\alpha_1}.
\end{equation}
Now consider the Cayley transform $\chi$ of $\lieg$ defined by
\begin{equation*}\label{xi}
\chi=Ad(\exp\tfrac\pi4(\theta X_\mu-X_\mu)).
\end{equation*}
It is easy to see that
\begin{equation*}\label{xi1}
Ad(\exp t(\theta
X_\mu-X_\mu))H_\mu=\cos(2t)H_\mu+\sin(2t)(X_\mu+\theta X_\mu).
\end{equation*}
Then $\chi(H_\mu)=X_\mu+\theta X_\mu$ and, since $\mu_{|_\liet}=0$,
$\chi$ fixes all elements of $\liet$. Therefore
$\lieh_\liek=\chi(\liet\oplus\liea)=\liet\oplus\CC(X_\mu+\theta
X_\mu)\subset\liek$ is a Cartan subalgebra of both $\lieg$ and
$\liek$.

For any $\phi\in\lieh^*$ define $\widetilde\phi\in\lieh_\liek^*$ by
$\widetilde\phi=\phi\cdot\chi^{-1}$. Then
$\Delta(\lieg,\lieh_\liek)=\{\widetilde\alpha:\alpha\in\Delta(\lieg,\lieh)\}$
and $\lieg_{\widetilde\alpha}=\chi(\lieg_\alpha)$. A root
$\widetilde\alpha\in\Delta(\lieg,\lieh_\liek)$ is said to be compact
(respectively noncompact) if $\lieg_{\widetilde\alpha}\subset\liek$
(respectively $\lieg_{\widetilde\alpha}\subset\liep$). Let
$\Delta(\liek,\lieh_\liek)$ and $\Delta(\liep,\lieh_\liek)$ denote, respectively,
the sets of compact and noncompact roots.

Using Lemma 3.1 of \cite{BCT} it follows that $\widetilde{\alpha_3}$ and
$\widetilde{\alpha_4}$ are compacts roots, and that $\widetilde{\alpha_2}$ is a
noncompact root. Also, since $X_\mu$ was chosen so that \eqref{Xmu} holds,
we obtain that $\widetilde{\alpha_1}$ is a noncompact root. From this it follows that

\begin{align*}
\begin{split}
\Delta(\liek,\lieh_\liek)=&\{\pm(\widetilde\epsilon_i\pm\widetilde\epsilon_j):1\le i<j\le4\}\\
&\cup\{\textstyle\frac12(\pm\widetilde\epsilon_1\pm\widetilde\epsilon_2
\pm\widetilde\epsilon_3\pm\widetilde\epsilon_4): \text{ even number of minus signs}\},\\
\Delta(\liep,\lieh_\liek)=&\{\pm\widetilde\epsilon_i:1\le  i\le4\}\\
&\cup\{\textstyle\frac12(\pm\widetilde\epsilon_1\pm\widetilde\epsilon_2
\pm\widetilde\epsilon_3\pm\widetilde\epsilon_4): \text{ odd number of minus signs}\}.
\end{split}
\end{align*}

Next we construct a particular Borel subalgebra $b_\liek=\lieh_\liek\oplus\liek^+$
of $\liek$ that will be useful later
on to describe the set $\Gamma$, as well as some of the properties of the elements
of $\Gamma$ (see Proposition \ref{hw3}). For more details on the
construction of the subalgebra $b_\liek$ and its relation with $\Gamma$ we
refer the reader to \cite{CT}.

Since $\alpha_1=\frac12(\epsilon_1-\epsilon_2-\epsilon_3-\epsilon_4)$
is the only simple root in $P_+$ set, as in the previous section,
$E=X_{-\alpha_1}+\theta X_{-\alpha_1}$. Let
$H_+\in\liet_\RR$ be such that $\alpha(H_+)>0$ for all
$\alpha\in\Delta^+(\liem,\liet)$. We say that $H_+$ is  $\liek$-regular
if in addition $\alpha(H_+)\ne0$ for all $\alpha$
with $\widetilde\alpha\in\Delta(\liek,\lieh_\liek)$. Since $\mu$ is the only
root in $\Delta^+(\lieg,\lieh)$ that vanishes on $\liet$ and since
$\widetilde\mu$ is a noncompact root, it follows that $\liek$-regular vectors
exist. Given a $\liek$-regular vector $H_+$ consider the positive system
\begin{equation*}\label{pos.syst}
\Delta^+(\liek,\lieh_\liek)=\{\widetilde\alpha\in\Delta(\liek,\lieh_\liek):
\alpha(H_+)>0\}.
\end{equation*}
If $\lambda_0=\alpha_1|_\liea$ is the simple restricted root and $H_+$
is a $\liek$-regular vector we consider the following set,
$$P_+(\lambda_0)^-=\{\alpha\in P_+:\alpha|_\liea=\lambda_0\text{\;and\;}
\alpha(H_+)<0\}.$$
\begin{defn} A positive system $\Delta^+(\liek,\lieh_\liek)$ defined by a
$\liek$-regular vector $H_+$ (see \eqref{pos.syst}) is said to be compatible
with $E$ if $\alpha-\alpha_1$ is a root for every $\alpha\in P_+(\lambda_0)^-$
such that $\alpha\ne\alpha_1$.
\end{defn}

The $\liek$-regular vectors in $\liet_\RR$, for $\lieg_o\simeq\lief_4$, are all of
the form $H_+=(0,t_2,t_3,t_4)$ with $t_2>t_3>t_4>0$ and $t_2\ne
t_3+t_4$. Different vectors $H_+$ define two different positive
systems, they depend only on whether $\pm(t_2-t_3-t_4)>0$, and they
are both compatible with $E$. From now on fix a $\liek$-regular
vector $H_+=(0,t_2,t_3,t_4)$ with $t_2>t_3>t_4>0$ and $t_2>t_3+t_4$.
The corresponding positive system in $\Delta(\liek,\lieh_\liek)$ is,
\begin{align*}
\begin{split}
\Delta^+(\liek,\lieh_\liek)=&\{\widetilde\epsilon_i\pm\widetilde\epsilon_j:2\le
i<j\le4\}\cup\{\widetilde\epsilon_i\pm\widetilde\epsilon_1:2\le i\le 4\}\\
&\cup\{\textstyle\frac12(\pm\tilde\epsilon_1+
\widetilde\epsilon_2\pm\widetilde\epsilon_3\pm\widetilde\epsilon_4):
\text{even number of minus signs}\},
\end{split}
\end{align*}
and $b_\liek=\lieh_\liek\oplus\liek^+$ is the associated Borel subalgebra.
A simple system in $\Delta^+(\liek,\lieh_\liek)$ is given by,
\begin{equation}\label{simroot}
\Pi(\liek,\lieh_\liek)=\{\widetilde\epsilon_4+\widetilde\epsilon_1, \,
\widetilde\epsilon_3-\widetilde\epsilon_4, \,
\widetilde\epsilon_4-\widetilde\epsilon_1, \,
\tfrac12(\widetilde\epsilon_1+\widetilde\epsilon_2-
\widetilde\epsilon_3-\widetilde\epsilon_4)\}.
\end{equation}
Hence $\liek\simeq\lieso(9,\CC)$.

\smallskip

Fix nonzero root vectors $X_{\epsilon_i+\epsilon_1}$ ($2\le i\le 4$),
$X_{\epsilon_i\pm \epsilon_j}$ ($2\le i<j\le 4$) and define,
\begin{equation}\label{def1}
X_{\widetilde\epsilon_i+\widetilde\epsilon_1}=\chi(X_{\epsilon_i+\epsilon_1}), \quad
X_{\widetilde\epsilon_i-\widetilde\epsilon_1}=\chi(\theta
X_{\epsilon_i+\epsilon_1}), \quad
X_{\widetilde\epsilon_i\pm\widetilde\epsilon_j}=\chi(X_{\epsilon_i\pm\epsilon_j}).
\end{equation}

\noindent Then it follows from Proposition 2.4 of \cite{CT} that,
\begin{equation}\label{ipmj}
X_{\widetilde\epsilon_i\pm \widetilde\epsilon_j}=X_{\epsilon_i\pm\epsilon_j},
\end{equation}
\begin{equation*}
X_{\widetilde\epsilon_i+\widetilde\epsilon_1}=\frac12(X_{\epsilon_i+\epsilon_1}
+[X_{\mu},\theta X_{\epsilon_i+\epsilon_1}]+
\theta X_{\epsilon_i+\epsilon_1})
\end{equation*}
and
\begin{equation*}
X_{\widetilde\epsilon_i-\widetilde\epsilon_1}=\frac12(X_{\epsilon_i+\epsilon_1}
-[X_{\mu},\theta X_{\epsilon_i+\epsilon_1}]+
\theta X_{\epsilon_i+\epsilon_1}).
\end{equation*}
Hence,
\begin{equation}\label{ip1-im1}
X_{\widetilde\epsilon_i+\widetilde\epsilon_1}-X_{\widetilde\epsilon_i-\widetilde\epsilon_1}
=[X_{\mu},\theta X_{\epsilon_i+\epsilon_1}]=X_{\epsilon_i}\in\liem^+,
\end{equation}
Then from \eqref{ipmj} and \eqref{ip1-im1} it follows that,
\begin{equation}\label{m^+}
\liem^+ = \langle\{X_{\widetilde\epsilon_i\pm \widetilde\epsilon_j}:
2\le i<j\le4\}\cup\{X_{\widetilde\epsilon_i+
\widetilde\epsilon_1}-X_{\widetilde\epsilon_i-\widetilde\epsilon_1}:2\le
i\le 4\}\rangle,
\end{equation}
where $\langle S\rangle$ denotes the linear space spanned by the set
$S$.

\smallskip

Next we define, as in the case of Sp($n$,1) (see Section 3 of \cite{BCT}), a Lie
subalgebra $\widetilde\lieg$ of $\lieg$ that it is both $\sigma$ and $\theta$
stable and its real form $\widetilde{\lieg_o}=\lieg_o\cap\widetilde\lieg$
is isomorphic to $\liesp(2,1)$. Recall that $\alpha_1=
\tfrac12(\epsilon_1-\epsilon_2-\epsilon_3-\epsilon_4)$ is the only simple
root in $P_+$. Let $\widetilde \lieg$ be the complex Lie subalgebra
of $\lieg$ generated by the following nonzero root vectors,
$$\{X_{\pm\epsilon_2},\,
X_{\pm \alpha_1},\,
X_{\pm(\epsilon_3+\epsilon_4)}\}.$$
Then $\widetilde\lieg$ is a  simple Lie algebra
stable under $\sigma$ and $\theta$. Therefore $\widetilde\lieg$ is the
complexification of the real
subalgebra $\widetilde{\lieg_o}=\lieg_o\cap\widetilde\lieg$ and $\widetilde\lieg=
\widetilde\liek\oplus\widetilde\liep$
is a Cartan decomposition of $\widetilde\lieg$, where
$\widetilde\liek=\liek\cap\widetilde\lieg$ and
$\widetilde\liep=\liep\cap\widetilde\lieg$. Moreover
$\widetilde\lieh=(\liet\cap\widetilde\lieg)
\oplus\liea$ is a Cartan
subalgebra of $\widetilde\lieg$ and $\widetilde\liem=\liem\cap\widetilde\liek$
is the centralizer of $\liea$ in $\widetilde\liek$. That $\widetilde{\lieg_o}\simeq\liesp(2,1)$
follows from the Dynkin-Satake diagram of $\widetilde{\lieg_o}$,

\

\setlength{\unitlength}{.75mm}
\begin{picture}(0,15)
\put(55,14){$\bullet$}
\put(58,15.2){\line(1,0){18}}
\put(55,10){\scriptsize$\epsilon_2$}
\put(77,14){$\circ$}
\put(83,15.6){\line(1,0){15}}
\put(83,14.8){\line(1,0){15}}
\put(76,10){\scriptsize$\alpha_1$}
\put(80,14){$<$}
\put(99,14){$\bullet$}
\put(95,10){\scriptsize$\epsilon_3+\epsilon_4$}
\end{picture}
Since the root vectors $X_\mu$ and $\theta X_\mu$ are in $\widetilde\lieg$,
it follows that $\widetilde\lieg$ is stable under the Cayley transform
$\chi$ of the pair $(\lieg,\lieh)$. Hence the restriction of $\chi$ to
$\widetilde\lieg$ is the Cayley transform
associated to $(\widetilde\lieg,\widetilde\lieh)$. Then
$\lieh_{\widetilde\liek}=\chi(\widetilde\lieh)=\lieh_\liek\cap\widetilde\liek$
is a Cartan subalgebra of $\widetilde\liek$  and $\widetilde\lieg$.
The positive system $\Delta^+(\liek,\lieh_\liek)$ determines a positive system
$\Delta^+(\widetilde\liek,\lieh_{\widetilde\liek})=
\{\widetilde\alpha_{|_{\lieh_{\widetilde\liek}}}\in
\Delta(\widetilde\liek,\lieh_{\widetilde\liek}):\widetilde\alpha\in\Delta^+
(\liek,\lieh_\liek)\}$ in $\Delta(\widetilde\liek,\lieh_{\widetilde\liek})$.
Moreover,
\begin{equation*}
\Pi(\widetilde\liek,\lieh_{\widetilde\liek})=\{\delta=\widetilde{\epsilon_2}-
\widetilde{\epsilon_1}, \, \gamma_1=\tfrac12(\widetilde{\epsilon_1}+\widetilde{\epsilon_2}-
\widetilde{\epsilon_3}-\widetilde{\epsilon_4}), \, \gamma_2=\widetilde{\epsilon_3}+
\widetilde{\epsilon_4}\}
\end{equation*}
is a simple system in $\Delta^+(\widetilde\liek,\lieh_{\widetilde\liek})$ and the
corresponding Dynkin diagram is

\bigskip

\setlength{\unitlength}{.75mm}
\begin{picture}(0,15)
\put(60,14){$\circ$}
\put(60,10){\scriptsize$\delta$}
\put(76,14){$\circ$}
\put(82,15.7){\line(1,0){10}}
\put(82,14.9){\line(1,0){10}}
\put(76,10){\scriptsize$\gamma_1$}
\put(80,14){$<$}
\put(94,14){$\circ$}
\put(94,10){\scriptsize$\gamma_2$}
\end{picture}

\noindent Then $\Delta^+(\widetilde\liek,\lieh_{\widetilde\liek})=
\{\delta, \gamma_1,\gamma_2, \gamma_3, \gamma_4\}$, where
$\gamma_3=\gamma_1+\gamma_2=\tfrac12(\widetilde{\epsilon_1}+\widetilde{\epsilon_2}+
\widetilde{\epsilon_3}+\widetilde{\epsilon_4})$ and
$\gamma_4=2\gamma_1+\gamma_2=\widetilde{\epsilon_1}+\widetilde{\epsilon_2}$.
Hence $\widetilde\liek\simeq\liesp(1,\CC)\times\liesp(2,\CC)$.

\smallskip

A simple calculation shows that $\chi(\theta
X_{-\alpha_1})=\frac{\sqrt2}2 E$, thus $E$ is a root vector in
$\widetilde\liek^+$ corresponding to $\gamma_3$. Then set
$X_{\gamma_3}=E$. Now define
$\varphi_1=\widetilde{\epsilon_3}+\widetilde{\epsilon_1}$,
$\delta_1=\widetilde{\epsilon_3}-\widetilde{\epsilon_1}$,
$\varphi_2=\widetilde{\epsilon_4}+\widetilde{\epsilon_1}$ and
$\delta_2=\widetilde{\epsilon_4}-\widetilde{\epsilon_1}$. Then in
view of \eqref{def1} we have,
\begin{equation}\label{pd1}
X_{\gamma_4}= \chi(X_{\epsilon_2+\epsilon_1}), \quad \,\,
X_{\delta}=\chi(\theta X_{\epsilon_2+ \epsilon_1}), \quad \,\,
X_{\varphi_1}= \chi(X_{\epsilon_3+\epsilon_1}),
\end{equation}
and
\begin{equation}\label{pd2}
X_{\delta_1}= \chi(\theta X_{\epsilon_3+\epsilon_1}), \quad
X_{\varphi_2}=\chi(X_{\epsilon_4+ \epsilon_1}), \quad
X_{\delta_2}=\chi(\theta X_{\epsilon_4+\epsilon_1}).
\end{equation}
It follows from \eqref{ip1-im1} that $X_{\gamma_4}-X_{\delta}$
and $X_{\varphi_i}-X_{\delta_i}$ are in $\liem^+$ for $i=1,2$.

\smallskip

Normalize $X_{-\gamma_4}$, $X_{-\delta}$, $X_{-\varphi_i}$ and
$X_{-\delta_i}$  so that $\langle
X_{\gamma_4},\,X_{-\gamma_4}\rangle=\langle X_{\delta},\,X_{-\delta}
\rangle=\langle X_{\varphi_i},\,X_{-\varphi_i} \rangle=\langle
X_{\delta_i},\,X_{-\delta_i} \rangle=1$, for $i=1,2$. Then it follows that,
\begin{equation}\label{ortog}
\langle X_{\gamma_4}-X_{\delta},\,X_{-\gamma_4}+X_{-\delta}\rangle=
\langle X_{\varphi_i}-X_{\delta_i},\,X_{-\varphi_i}+X_{-\delta_i}\rangle =0.
\end{equation}
Hence, $X_{-\gamma_4}+X_{-\delta}$ and $X_{-\varphi_i}+X_{-\delta_i}$ $(i=1,2)$
are in $(\liem^+)^\bot$, the orthogonal complement of $\liem^+$ in $\liek$
with respect to the Killing form of $\liek$.

To simplify the notation set, $X_{\pm1}=X_{\pm\gamma_1}$, $X_{\pm2}=X_{\pm\gamma_2}$,
$X_{\pm3}=X_{\pm\gamma_3}$, and $X_{\pm4}=X_{\pm\gamma_4}$. Let
$H_1\in[\liek_{\gamma_1},\liek_{-\gamma_1}]$  be such that $\gamma_1(H_1)=2$, and
normalize $X_1$ and $X_{-1}$ so that $\{H_1,X_1,X_{-1}\}$ is an
$\lies$-triple. Next normalize $X_2$ and $X_4$ (and accordingly $X_\delta$)
so that
\begin{equation*}\label{norm1}
[X_1,X_2]=E \quad {\text {and}} \quad [X_1,E]=X_4.
\end{equation*}
From this, and the fact that $\gamma_2(H_1)=-2$, it follows that
\begin{equation*}\label{norm2}
[X_{-1},E]=2X_2 \quad {\text {and}} \quad [X_{-1},X_4]=2E.
\end{equation*}

Now choose $H_2\in[\liek_{\gamma_2},\liek_{-\gamma_2}]$ such that $\gamma_2(H_2)=2$
and normalize $X_{-2}$ so that $\{H_2,X_2,X_{-2}\}$ is an $\lies$-triple.
Since $[\liek_{\gamma_2},\liek_{-\gamma_2}]\subset\liet$ and $\gamma_1(H_2)=-1$,
if we define
\begin{equation}\label{H}
H=\tfrac12H_2,
\end{equation}
we obtain a vector $H\in\liet$ such that $\dot H(E)=\tfrac12E$. This vector $H$ is the
one used in \eqref{c2}. Also, since $\delta(H_2)=0$, we have
$[X_{\delta},H]=0$.

\smallskip

As in the previous sections set $Z=Z_{\alpha_1}$, $Y=Y_{\alpha_1}$
and $\widetilde Y=Y+H$. From Lemma \ref{defc} it follows that $\dot
E(Y)=\frac32 E$, hence $\dot E(\widetilde Y)=E$. Now, since
$(\epsilon_1+\epsilon_2)(H_{\alpha_1})=0$, we have
$(\epsilon_1+\epsilon_2)(Y)=-(\epsilon_1+\epsilon_2)(Z)=-1$ because
$(\epsilon_1+\epsilon_2)|_\liea=2\alpha_1|_\liea$ and
$\alpha_1(Z)=\frac12$ (see Lemma \ref{defc}). Then $\dot
X_{\delta}(Y)=X_{\delta}$, and therefore $\dot X_{\delta}(\widetilde
Y)=X_{\delta}$.

\section{The $M$-spherical $K$-modules}\label{KMsph}

In this section we describe the main properties of the $K$-modules in the
classes $\Gamma$ and $\Gamma_1$ (see \eqref{Gamma1}). In the following
proposition we collect several results that will be very useful later on, and
in Proposition \ref{d(uv)} we will prove some important properties of the
Kostant degree $d(u)$ for $u\in U(\liek)^M$ that make use of these results.

\begin{prop}\label{hw3} Let $G_o$ be locally isomorphic to F$_4$ and let
$\lieb_\liek=\lieh_\liek\oplus\liek^+$ be the  Borel subalgebra of
$\liek$ defined before. Then $\liem^+\subset\liek^+$ and $E$ is a
root vector in $\liek^+$. Moreover:

\noindent (i) For any $\gamma\in\hat K$ let $\xi_\gamma$ denote its
highest  weight. Then, $\gamma\in\Gamma$ if and only if
$\xi_\gamma=\frac k2(\gamma_4+\delta)+\ell\gamma_3$ with $k, \ell\in
\NN_o$. In this context we write $\gamma=\gamma_{k,\ell}$,
$\xi_\gamma=\xi_{k,\ell}$ and $V_{k,\ell}$ for the corresponding
representation space. Also we shall refer to any $v\in V_{k,\ell}^M$
as an $M$-invariant element of type $(k,\ell)$.

\noindent(ii) For any $\gamma_{k,\ell}\in\Gamma$ we have
$d(\gamma_{k,\ell})=k+2\ell$.

\noindent(iii) If $\gamma\in\Gamma$ we have $\gamma\in\Gamma_1$ if
and only if $\xi_\gamma=\xi_{k,\ell}$ with $k$ even.

\noindent(iv)  For any $\gamma_{k,\ell}\in\Gamma$ we have
$X_\delta^kE^\ell(V^M_{k,\ell})=V^{\liek^+}_{k,\ell}$ and
$X_\delta^pE^q(V^M_{k,\ell})=\{0\}$ if  and only if $p>k$ or
$p+q>k+\ell$.
\end{prop}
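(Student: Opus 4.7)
I would organize the proof in four parts. The preliminary claims $\liem^+\subset\liek^+$ and $E=X_{\gamma_3}\in\liek^+$ follow at once from the explicit generators of $\liem^+$ listed in \eqref{m^+} together with $\gamma_3\in\Delta^+(\liek,\lieh_\liek)$.

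Part (i) would be extracted from \cite{CT}, where the Borel subalgebra $\lieb_\liek$ was constructed precisely so that the $M$-spherical highest weights of $K$ take the form $\frac{k}{2}(\gamma_4+\delta)+\ell\gamma_3$. The underlying geometric fact is that $K_o/M_o\simeq\mathrm{Spin}(9)/\mathrm{Spin}(7)\simeq S^{15}$ via the $16$-dimensional spin representation of $\mathrm{Spin}(9)$, so $L^2(K_o/M_o)$ decomposes multiplicity-free under $K_o$ as a two-parameter family of $K$-types, with fundamental generators the spin and vector representations of $\mathrm{Spin}(9)$ (highest weights $\widetilde\epsilon_2=\tfrac12(\gamma_4+\delta)$ and $\gamma_3$ respectively).

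For part (ii), the Kostant--Rallis theorem identifies $\mathcal{H}(\liep)$ as a $K$-module and, since $(K_o,M_o)$ is a Gelfand pair, places each $V_{k,\ell}$ in a single homogeneous degree. The degree is additive on the two generators of $\Gamma$, so it suffices to check $d(\gamma_{1,0})=1$ (since $V_{1,0}\simeq\liep\subset\mathcal{H}^1(\liep)$) and $d(\gamma_{0,1})=2$ (since the vector representation of $\mathrm{Spin}(9)$ occurs in $\mathcal{H}^2(\liep)$ via the decomposition of $S^2(\liep)$ modulo the $K$-invariant quadratic form); this yields $d(\gamma_{k,\ell})=k+2\ell$. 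For part (iii), $U(\liek)\cong S(\liek)$ as $K$-modules by PBW, and the adjoint representation of $\mathrm{Spin}(9)$ factors through $\mathrm{SO}(9)$, so $U(\liek)$ contains only integer-weight $\mathrm{Spin}(9)$-modules. A Hadamard-type change of basis from $\{\widetilde\epsilon_i\}$ to an orthonormal basis $\{f_i\}$ for $B_4=\lieso(9)$ (sending $\gamma_3$ to $f_1$) shows that the $f$-coordinates of $\xi_{k,\ell}$ are integral exactly when $k$ is even; conversely every integer-weight $\mathrm{SO}(9)$-module occurs in some tensor power of the adjoint, hence in $S(\liek)$.

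Part (iv) is the heart of the proposition. Given $v\in V^M_{k,\ell}$, the inclusion $\liet\subset\liem$ forces the $\lieh_\liek$-weight of $v$ to have the form $t\widetilde\epsilon_1$. The weight of $X_\delta^pE^qv$ is then $t\widetilde\epsilon_1+p\delta+q\gamma_3$, and equating this with $\xi_{k,\ell}$ yields the unique solution $q=\ell$, $p=k$, $t=k$. Hence $X_\delta^pE^q(V_{k,\ell}^M)\subset V_{k,\ell}^{\liek^+}$ forces $(p,q)=(k,\ell)$, and only the component of $V_{k,\ell}^M$ of weight $k\widetilde\epsilon_1$ can contribute. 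To finish I would restrict $V_{k,\ell}$ to $\widetilde\liek\simeq\liesp(1)\times\liesp(2)$ arising from $\widetilde\lieg\simeq\liesp(2,1)$ of Section \ref{F4}: since $X_\delta$ and $E$ lie in distinct commuting factors, the problem decouples into two independent $\liesl(2)$-calculations, both handled in \cite{BCT} for Sp$(2,1)$. The main obstacle is the branching step: one must verify that the restriction of $V_{k,\ell}$ to $\widetilde\liek$ contains an irreducible $\widetilde\liek$-constituent of $\widetilde M$-spherical type $(k,\ell)$ whose extremal $\widetilde M$-invariant supplies an $M$-invariant of weight $k\widetilde\epsilon_1$; once this is in hand, both the nonvanishing of $X_\delta^kE^\ell$ and the full vanishing criterion $p>k$ or $p+q>k+\ell$ follow from the corresponding $\liesl(2)$-statements in each factor.
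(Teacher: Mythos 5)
The paper itself gives no proof of this proposition: it refers to \cite{CT} (Proposition 4.4, Theorem 4.5 and Theorem 5.3 of that paper for items (i), (ii) and (iv)) and dismisses (iii) as standard. Your reconstruction is therefore an independent route. The preliminary inclusions, the geometric framing of (i) via $K_o/M_o\simeq\mathrm{Spin}(9)/\mathrm{Spin}(7)\simeq S^{15}$, and the parity argument for (iii) are sound. But there are two genuine gaps.

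For (ii), asserting that the Kostant degree is additive on $\Gamma$ is precisely the nontrivial point, not a free step: the Cartan product $V_{\gamma_{1,0}}^{\odot k}\odot V_{\gamma_{0,1}}^{\odot\ell}$ occurs in harmonics of degree \emph{at most} $k+2\ell$, and showing that the $M$-invariant does not drop to a lower homogeneous degree is exactly what Theorem 4.5 of \cite{CT} does. There is also a circularity risk: the paper's own proof that $d$ is multiplicative on $U(\liek)^M$ (Proposition \ref{d(uv)}(b)) invokes item (iv) of the present proposition, so you cannot appeal to that result here. If you want to argue through the $z$-eigenvalue of Lemma \ref{z2}, you must still check that the top $z$-eigenvector of $V_{\gamma_{1,0}}^{\otimes k}\otimes V_{\gamma_{0,1}}^{\otimes\ell}$ survives the projection onto the Cartan product.

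For (iv), your weight computation correctly singles out $(p,q,t)=(k,\ell,k)$ as the only candidate, but it establishes neither that the $M$-invariant actually has weight $k\widetilde\epsilon_1$, nor that $X_\delta^kE^\ell$ acts nontrivially on it, nor the mixed vanishing criterion. You flag the branching step yourself, and it is indeed the crux. Beyond that, the claimed decoupling into two independent $\liesl(2)$-calculations because $X_\delta$ and $E$ lie in the two factors of $\widetilde\liek\simeq\liesp(1,\CC)\times\liesp(2,\CC)$ does not go through: the condition $p+q>k+\ell$ does not factor into separate one-variable conditions, and the $\lies$-triple used in Proposition \ref{Techo} is $\{H_1,X_1,X_{-1}\}$ with $X_1=X_{\gamma_1}$, whose action trades powers of $E$ for powers of $X_\delta$ and therefore couples the two factors rather than separating them. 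The argument one actually needs is the coupled $\liesl(2)$-analysis of Theorem 5.3 of \cite{CT} (see Proposition 3.11 of \cite{BCT} for the symplectic analogue), not a pair of independent $\liesl(2)$ computations.
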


For a proof of this proposition we refer the reader to \cite{CT}. The
construction of the Borel subalgebra $\lieb_\liek$ is
contained in Section 3 of \cite{CT} and the statements in (i), (ii)
and (iv) follow from Proposition 4.4, Theorem 4.5 and Theorem 5.3 of
\cite{CT}, respectively. On the other hand (iii) is a well known
general fact. We point out that some of these results where first
established in \cite{JW}, others where proved in \cite{BT} and they
were generalized in \cite{CT} to any real rank one semisimple Lie
group.

The following proposition is the analogue of part (ii) of Proposition 3.11
of \cite{BCT}. We omit its proof since, up to minor changes, is the same as
that of Proposition 3.11.

\begin{prop}\label{Techo} Let $G_o$ be locally isomorphic to
F$_4$. Let $\gamma_{k,\ell}\in\Gamma$ and let $V_{k,\ell}$ be a $K$-module
in the class $\gamma_{k,\ell}$. Then if $0\ne v\in V_{k,\ell}^M$ the set
$$\{X_{\delta}^{k-j}E^{\ell+j}(v):\,0\le j\le k\}$$
is a basis of the irreducible $\{H_1,X_1,X_{-1}\}$-module of dimension
$k+1$ generated by any non trivial highest weight vector of
$V_{k,\ell}$. Moreover, $X_{\delta}^{k-j}E^{\ell+j}(v)$ is a weight vector of weight
$\xi_{k,\ell}-j\gamma_1$ and the following identities hold,
\begin{equation}\label{Dk11}
X_1X_{\delta}^{k-j}E^{\ell+j}(v)=\frac{(j+\ell)}2X_{\delta}^{k-j+1}E^{\ell+j-1}(v)\qquad
0\le j\le k,
\end{equation}
\begin{equation}\label{Dk22}
X_{-1}X_{\delta}^{k-j}E^{\ell+j}(v)=\frac{2(j+1)(k-j)}{\ell+j+1}X_{\delta}^{k-j-1}E^{\ell+j+1}(v)
\qquad 0\le j\le k,
\end{equation}
\begin{equation}\label{Dk33}
X^j_{-1}(u_{k,\ell})=2^jj!\binom kj\binom{\ell+j}\ell^{-1}
X_{\delta}^{k-j}E^{\ell+j}(v) \qquad 0\le j\le k,
\end{equation}
where $u_{k,\ell}$ is the highest weight vector $X_{\delta}^kE^\ell(v)$.
\end{prop}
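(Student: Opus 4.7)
The plan is to follow the scheme of Proposition 3.11 of \cite{BCT}, adapted to the F$_4$ setting. The principal structural input, available from Section \ref{F4}, is the decomposition $\widetilde\liek\simeq\liesp(1,\CC)\oplus\liesp(2,\CC)$, in which $X_\delta$ generates the $\liesp(1)$ factor while $X_{\pm1},X_{\pm2},E,X_{\pm4}$ all live in the $\liesp(2)$ factor. I will exploit throughout that $X_\delta$ therefore commutes with $X_{\pm1},X_{\pm2},E$ and $X_{\pm4}$, which reduces each manipulation to a computation inside the $\liesp(2)$ factor.

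First I would extract the consequences of $M$-invariance. From \eqref{ip1-im1} we have $X_4-X_\delta=X_{\epsilon_2}\in\liem^+$, so $X_4(v)=X_\delta(v)$, while \eqref{m^+} gives $X_2\in\liem^+$ and $X_{-2}\in\liem^-$, so $X_2(v)=X_{-2}(v)=0$. A short Jacobi computation using $E=[X_1,X_2]$, $[X_{-2},X_1]=0$ (because $\gamma_1-\gamma_2$ is not a root), $[X_{-2},X_2]=-H_2$, and $\gamma_1(H_2)=-1$ gives $[X_{-2},E]=-X_1$. Combining with $X_{-2}(v)=0$ yields the crucial identity
\begin{equation*}
X_1(v)=-X_{-2}E(v).
\end{equation*}

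Second, I would prove \eqref{Dk11}. An induction on $m$ using $[X_{-2},E]=-X_1$, $[X_1,E]=X_4$, and $[X_4,E]=0$ produces the operator identity $[E^m,X_{-2}]=mE^{m-1}X_1+\binom{m}{2}E^{m-2}X_4$. Writing $X_1X_\delta^{k-j}E^{\ell+j}(v)=X_\delta^{k-j}\bigl(E^{\ell+j}X_1(v)+(\ell+j)E^{\ell+j-1}X_\delta(v)\bigr)$ and substituting $X_1(v)=-X_{-2}E(v)$, I pull $X_{-2}$ to the left via this operator identity. Since $X_1E(v)=EX_1(v)+X_4(v)=-EX_{-2}E(v)+X_\delta(v)$, the process is self-referential and collecting the $E^{\ell+j}X_{-2}E(v)$ terms produces the solvable equation
\begin{equation*}
(\ell+j+1)\,E^{\ell+j}X_{-2}E(v)=X_{-2}E^{\ell+j+1}(v)+\binom{\ell+j+1}{2}E^{\ell+j-1}X_\delta(v).
\end{equation*}
Multiplying by $X_\delta^{k-j}$ and commuting $X_{-2}$ through $X_\delta^{k-j}$, the first term on the right vanishes by Proposition \ref{hw3}(iv) since $(k-j)+(\ell+j+1)>k+\ell$; what remains collapses to $X_\delta^{k-j}E^{\ell+j}X_1(v)=-\tfrac{\ell+j}{2}X_\delta^{k-j+1}E^{\ell+j-1}(v)$, and adding back the extra term $(\ell+j)X_\delta^{k-j+1}E^{\ell+j-1}(v)$ gives \eqref{Dk11}.

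For the remaining statements, the $H_1$-weight of $u_{k,\ell}$ is $\xi_{k,\ell}(H_1)=k$ (from $\gamma_4(H_1)=2$ and $\delta(H_1)=\gamma_3(H_1)=0$), so $u_{k,\ell}$ generates an irreducible $\{H_1,X_1,X_{-1}\}$-submodule of dimension $k+1$ with standard basis $\{X_{-1}^ju_{k,\ell}\}_{j=0}^k$ and weights $\xi_{k,\ell}-j\gamma_1$. Proposition \ref{hw3}(iv) ensures $X_\delta^{k-j}E^{\ell+j}(v)\ne0$ for $0\le j\le k$. Combining \eqref{Dk11} with the standard $\lies$-triple identity $X_1X_{-1}^ju_{k,\ell}=j(k-j+1)X_{-1}^{j-1}u_{k,\ell}$ and inducting on $j$ gives \eqref{Dk33}, with the recursion $c_j/c_{j-1}=2j(k-j+1)/(\ell+j)$ solving to $c_j=2^jj!\binom{k}{j}\binom{\ell+j}{\ell}^{-1}$; the basis assertion and the weight assertion follow at once. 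Formula \eqref{Dk22} is then immediate from $X_{-1}^{j+1}u_{k,\ell}=c_{j+1}X_\delta^{k-j-1}E^{\ell+j+1}(v)=c_jX_{-1}X_\delta^{k-j}E^{\ell+j}(v)$, yielding the ratio $2(j+1)(k-j)/(\ell+j+1)$. The principal delicacy is the noncentral commutation $[E^m,X_{-2}]$ with its $\binom{m}{2}X_4$ correction and the resulting self-referential equation for $E^{\ell+j}X_{-2}E(v)$; it simplifies cleanly only because the Kostant-degree vanishing $X_\delta^{k-j}E^{\ell+j+1}(v)=0$ from Proposition \ref{hw3}(iv) eliminates the troublesome $X_{-2}E^{\ell+j+1}(v)$ term.
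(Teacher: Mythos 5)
Your derivation of \eqref{Dk11} is correct and nicely organized: the observations that $X_\delta$ commutes with the $\liesp(2,\CC)$ factor, that $M$-invariance gives $X_2(v)=X_{-2}(v)=0$ and $X_4(v)=X_\delta(v)$, the identity $X_1(v)=-X_{-2}E(v)$, the commutator formula $[E^m,X_{-2}]=mE^{m-1}X_1+\binom m2 E^{m-2}X_4$, and the use of Proposition~\ref{hw3}(iv) to kill $X_\delta^{k-j}X_{-2}E^{\ell+j+1}(v)$ all check out, and the resulting equation
\begin{equation*}
(\ell+j+1)\,E^{\ell+j}X_{-2}E(v)=X_{-2}E^{\ell+j+1}(v)+\binom{\ell+j+1}{2}E^{\ell+j-1}X_\delta(v)
\end{equation*}
does collapse to \eqref{Dk11} as you describe.

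The passage from \eqref{Dk11} to \eqref{Dk33} is where the argument has a genuine gap. You propose to prove $X_{-1}^{j}u_{k,\ell}=c_{j}X_\delta^{k-j}E^{\ell+j}(v)$ by induction on $j$, using the $\lies$-triple relation $X_1X_{-1}^ju_{k,\ell}=j(k-j+1)X_{-1}^{j-1}u_{k,\ell}$ together with \eqref{Dk11}. But applying $X_1$ to both sides of the *desired* identity and matching constants is only a consistency check: it shows $X_1\bigl(X_{-1}^{j}u_{k,\ell}-c_jX_\delta^{k-j}E^{\ell+j}(v)\bigr)=0$, and a vector of $V_{k,\ell}$ annihilated by $X_1$ alone need not vanish. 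To close the induction one must know beforehand that $X_\delta^{k-j}E^{\ell+j}(v)$ lies in the $\{H_1,X_1,X_{-1}\}$-module generated by $u_{k,\ell}$ — equivalently, that it is an $\lieh_\liek$-weight vector of weight $\xi_{k,\ell}-j\gamma_1$. That is precisely one of the conclusions of the proposition and is not automatic here: $v$ itself need \emph{not} be an $\lieh_\liek$-weight vector, because $\liem^+$ (see \eqref{m^+}, where vectors such as $X_{\widetilde\epsilon_i+\widetilde\epsilon_1}-X_{\widetilde\epsilon_i-\widetilde\epsilon_1}$ mix two $\lieh_\liek$-weight spaces) is not $\lieh_\liek$-stable, so $V^M_{k,\ell}$ is preserved by $\liet$ but not by all of $\lieh_\liek$. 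Moreover you obtain \eqref{Dk22} as a consequence of \eqref{Dk33}, so there is no independent handle on $X_{-1}X_\delta^{k-j}E^{\ell+j}(v)$ to carry out the induction step directly. The natural repair — and what the analogous Proposition~3.11 of \cite{BCT} in effect requires — is to compute $X_{-1}X_\delta^{k-j}E^{\ell+j}(v)$ directly, by a calculation parallel to your proof of \eqref{Dk11} (now using $[X_{-1},E]=2X_2$, $[X_{-1},X_4]=2E$ and the degree-bound vanishings from Proposition~\ref{hw3}(iv)), obtaining \eqref{Dk22} first; \eqref{Dk33} and the basis and weight assertions then follow by applying $X_{-1}$ repeatedly to $u_{k,\ell}=X_\delta^kE^\ell(v)$, which is where the induction legitimately runs.
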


\smallskip

In the following proposition we prove some important properties of the Kostant
degree $d(u)$ for $u\in U(\liek)^M$. Even though we give the proof for F$_4$,
since our argument relies heavily on Proposition \ref{hw3}, the same proof hold
for the other real rank one groups, SO($n$,1), SU($n$,1) and Sp$(n,1)$, with the
appropriate changes. These result will be used in Section \ref{mainproof}.

\begin{prop}\label{d(uv)} Let $G_o$ be locally isomorphic to F$_4$.
If $u,v\in U(\liek)^M$ are nonzero vectors, then
\begin{enumerate}[\rm (a)]
\item $d(u+v)\le\max\{d(u),d(v)\}$,

\item $d(uv)=d(u)+d(v)$,

\item $d(u)=0$ if and only if $u\in U(\liek)^K$.
\end{enumerate}
\end{prop}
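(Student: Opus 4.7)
Part (a) is immediate from the definition: if $u=\sum_\gamma \mathbf{u}_\gamma$ and $v=\sum_\gamma \mathbf{v}_\gamma$ are the $K$-isotypic decompositions, the $\gamma$-component of $u+v$ is $\mathbf{u}_\gamma+\mathbf{v}_\gamma$, which can be nonzero only if at least one summand is. Part (c) follows from Proposition~\ref{hw3}(i)--(ii): the equation $d(\gamma_{k,\ell})=k+2\ell=0$ forces $k=\ell=0$, so $\gamma$ is the trivial representation; hence $d(u)=0$ (with $u\ne 0$) iff $u$ lies in the trivial isotypic component, iff $u\in U(\liek)^K$.

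For the upper bound in part (b), decompose $u,v$ and reduce to the single-isotypic case $u\in V_{k_1,\ell_1}^M$, $v\in V_{k_2,\ell_2}^M$. Because multiplication is $K$-equivariant, the adjoint $K$-submodule generated by $uv$ is a quotient of $V_{k_1,\ell_1}\otimes V_{k_2,\ell_2}$, so any $K$-type $\gamma_{K,L}$ that appears satisfies $\xi_{K,L}\le \xi_{k_1+k_2,\ell_1+\ell_2}$ in the dominance order of $\liek$. Let $H_0\in\lieh_\liek$ be the element with $\widetilde\epsilon_i(H_0)=1$ for $1\le i\le 4$, so that $d(\gamma_{k,\ell})=\xi_{k,\ell}(H_0)=k+2\ell$. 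Inspecting the explicit list of positive roots of $\liek$ given in Section~\ref{F4}, each positive root pairs non-negatively with $H_0$; therefore $\xi_{K,L}(H_0)\le \xi_{k_1+k_2,\ell_1+\ell_2}(H_0)$, which yields the upper bound $d(uv)\le d(u)+d(v)$.

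For the lower bound, write $u=\sum\mathbf{u}_{k,\ell}$, $v=\sum\mathbf{v}_{k,\ell}$, and let $(K_1^*,L_1^*)$ be the pair with $\mathbf{u}_{K_1^*,L_1^*}\ne 0$, $K_1^*+2L_1^*=d(u)$, and $K_1^*$ maximal among such pairs; similarly define $(K_2^*,L_2^*)$ for $v$. I will show that the $(K_1^*+K_2^*,L_1^*+L_2^*)$-isotypic component of $uv$ is nonzero, which gives Kostant degree $d(u)+d(v)$. Expanding $uv=\sum\mathbf{u}_{k_1,\ell_1}\mathbf{v}_{k_2,\ell_2}$, by the upper bound only pairs with $k_i+2\ell_i=d_i$ can contribute at Kostant degree $d_1+d_2$. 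For $(K_1^*+K_2^*,L_1^*+L_2^*)$ to appear in $V_{k_1,\ell_1}\otimes V_{k_2,\ell_2}$ at the top Kostant degree, the same $\widetilde\epsilon_2$-coefficient computation as in the upper bound forces $K_1^*+K_2^*\le k_1+k_2$; combined with the maximality $k_i\le K_i^*$, this pins down $k_i=K_i^*$ and $\ell_i=L_i^*$. Hence only the single product $\mathbf{u}_{K_1^*,L_1^*}\mathbf{v}_{K_2^*,L_2^*}$ contributes.

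To verify that this single product has a nonzero Cartan $(K_1^*+K_2^*,L_1^*+L_2^*)$ component, apply the operator $X_\delta^{K_1^*+K_2^*}E^{L_1^*+L_2^*}$ via the adjoint action (note $[X_\delta,E]=0$, since $\delta+\gamma_3$ is not a root of $\liek$). The Leibniz expansion together with the vanishing criterion of Proposition~\ref{hw3}(iv) forces $(i,j)=(K_1^*,L_1^*)$, yielding a nonzero scalar multiple of $h_u h_v$, where $h_u=X_\delta^{K_1^*}E^{L_1^*}(\mathbf{u}_{K_1^*,L_1^*})$ and $h_v=X_\delta^{K_2^*}E^{L_2^*}(\mathbf{v}_{K_2^*,L_2^*})$ are highest weight vectors; $h_u h_v\ne 0$ since $U(\liek)$ is a domain. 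Applying Proposition~\ref{hw3}(iv) once more shows the same operator annihilates every other $K$-isotypic component that could appear in the adjoint $K$-module generated by $\mathbf{u}_{K_1^*,L_1^*}\mathbf{v}_{K_2^*,L_2^*}$, so the nonvanishing of the image forces the Cartan component itself to be nonzero. The main obstacle is precisely this isolation step: without the maximal-$K^*$ choice (which pins down a unique contributing pair in the previous paragraph), cross-term non-Cartan contributions from other top-degree pairs could in principle cancel the Cartan contribution.
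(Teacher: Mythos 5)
Your proposal is correct and takes essentially the same route as the paper. For the upper bound in (b) you use a linear functional $H_0$ with $\widetilde\epsilon_i(H_0)=1$, which is precisely dual to the single simple root $\widetilde\epsilon_4+\widetilde\epsilon_1$ that the paper tracks in the simple-root expansion of $\xi_{i,j}$ (the other simple roots pair to zero with $H_0$), so the two computations are identical in content. For the lower bound you make the same maximal-first-index choice ($K_i^*$ maximal among top-Kostant-degree types) and apply the same operator $\dot X_\delta^{K_1^*+K_2^*}\dot E^{L_1^*+L_2^*}$, invoking Proposition~\ref{hw3}(iv) through the Leibniz rule to see that exactly one term survives. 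The only cosmetic difference is that the paper applies the operator directly to $uv$ and concludes that \emph{some} component $\mathbf{b}_{k+\ell,j}$ with $j\ge(p+q-k-\ell)/2$ is nonzero, whereas you pin down the specific type $(K_1^*+K_2^*,L_1^*+L_2^*)$; this requires the extra observation, which you state but compress, that the operator annihilates every other $K$-type occurring in the critical product (this again uses the $H_0$-coefficient computation, not Proposition~\ref{hw3}(iv) alone). Both variants are valid, and the paper's version is marginally more economical because it does not need this isolation step.
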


\begin{proof} The assertions (a) and (c) follow directly from the definition of the Kostant degree. We start the proof of (b) by showing  that $d(uv)\le d(u)+d(v)$ for any
$0\ne u,v\in U(\liek)^M$. Let us begin by considering $u\in V_{r,s}\subset
U(\liek)^M$ and $v\in V_{r',s'}\subset U(\liek)^M$ where $V_{r,s}$ and
$V_{r',s'}$ are, respectively, irreducible finite dimensional $K$-modules
in the classes $\gamma_{r,s}$ and $\gamma_{r',s'}$ of $\Gamma_1$. Then
$u\otimes v\in(V_{r,s}\otimes V_{r',s'})^M$ and we decompose it as follows,
\begin{equation}\label{uxv}
u\otimes v=\sum_{i,j}{\textbf w}_{i,j},
\end{equation}
where ${\textbf w}_{i,j}\ne0$ is the $\gamma_{i,j}$-isotypic component of $u\otimes v$. We recall that if
$\gamma_{i,j}\in \Gamma$ then its  highest weight is  $\xi_{i,j}=\frac {i}{2}\,(\gamma_4+\delta)+j\,\gamma_3$ and $d(\gamma_{i,j})=i+2j$,
see Proposition \ref{hw3}. We will show that $d({\textbf w}_{i,j})\le d(u)+d(v)$
for any ${\textbf w}_{i,j}$ that occurs in \eqref{uxv}.

\smallskip

In view of \eqref{simroot} a simple system of roots in $\Delta^+(\liek,\lieh_\liek)$
is given by,
\begin{equation}\label{simroot(k)}
\Pi(\liek,\lieh_\liek)=\{\widetilde\epsilon_4+\widetilde\epsilon_1, \,
\widetilde\epsilon_3-\widetilde\epsilon_4, \,
\widetilde\epsilon_4-\widetilde\epsilon_1, \,
\gamma_1=\tfrac12(\widetilde\epsilon_1+\widetilde\epsilon_2-
\widetilde\epsilon_3-\widetilde\epsilon_4)\}.
\end{equation}
Then it follows that
\begin{equation*}
\gamma_4+\delta=(\widetilde\epsilon_4+\widetilde\epsilon_1)+
2(\widetilde\epsilon_3-\widetilde\epsilon_4)+
3(\widetilde\epsilon_4-\widetilde\epsilon_1)+ 4\,\gamma_1
\end{equation*}
and
\begin{equation*}
\gamma_3=(\widetilde\epsilon_4+\widetilde\epsilon_1)+
(\widetilde\epsilon_3-\widetilde\epsilon_4)+
(\widetilde\epsilon_4-\widetilde\epsilon_1)+ \gamma_1.
\end{equation*}

\smallskip

\noindent If $V_{i,j}\subset U(\liek)^M$ occurs in the decomposition of
$V_{r,s}\otimes V_{r',s'}$ it is known (see \cite {Hu}) that its highest
weight $\xi_{i,j}=\frac {i}{2}\,(\gamma_4+\delta)+j\,\gamma_3$
is given by,
\begin{equation}\label{xi_{i,j}}
\xi_{i,j}=\xi_{r+r',s+s'}-\left[c_1(\widetilde\epsilon_4+\widetilde\epsilon_1)+
c_2(\widetilde\epsilon_3-\widetilde\epsilon_4)+
c_3(\widetilde\epsilon_4-\widetilde\epsilon_1)+
c_4\,\gamma_1\right],
\end{equation}

\noindent where $c_i \in\NN_o$ for $1\le i\le 4$. Hence comparing the
coefficients of the simple root $\widetilde\epsilon_4+\widetilde\epsilon_1$ in the
left hand side and the right hand side of \eqref{xi_{i,j}} it follows that
$$\frac{i}2 + j=\frac{r+r'}2 +s+s' - c_1.$$
Then, since $c_1\ge 0$, we have
$$d(\textbf{w}_{i,j})= r+r'+2(s+s')-2c_1=
d(u)+d(v)-2c_1 \le d(u)+d(v).$$
Therefore, using the definition \eqref{Kostdeg} and
\eqref{uxv}  it follows that,
$$d(u\otimes v) = \max \{d(\textbf{w}_{i,j})\}\le d(u)+d(v).$$
Now, using that the map $u\otimes v\in U(\liek)^M\otimes U(\liek)^M
\rightarrow uv\in U(\liek)^M$ is a $K$-homomorphism it follows that
$d(uv) \le d(u)+d(v)$.

\smallskip

Now let $u\in V_{r,s}\oplus \dots \oplus V_{r,s}$ ($m$ summands) and
$v\in V_{r',s'}\oplus\dots\oplus V_{r',s'}$ ($n$ summands), where
$V_{r,s}$ and $V_{r',s'}$ are irreducible finite dimensional $K$-submodules of
$U(\liek)^M$ as above. Write $u=u_1+\dots+u_m$ with $u_k\in V_{r,s}$
($1\le k \le m$) and $v=v_1+\dots+v_n$ with $v_{\ell}\in V_{r',s'}$
($1\le \ell \le n$). Then using above calculation we obtain,
\begin{equation}\label{d(uv)'}
\begin{split}
d(uv)&=d\Big(\!\sum_{k,\ell}u_kv_{\ell}\Big)
\le\max \{d(u_kv_{\ell}): 1\le k\le m,\,\, 1\le\ell\le n\}\\
             &\le\max \{d(u_k)+d(v_{\ell})): 1\le k\le m,\,\, 1\le\ell\le n\}
             =d(u)+d(v).\\
\end{split}
\end{equation}

\smallskip

Consider now $u,v\in U(\liek)^M$ such that $d(u)=p$ and $d(v)=q$. It follows
from \eqref{Kostdeg} that,
\begin{equation}\label{K-isotypic}
u=\sum_{d(\gamma)\le p} \textbf{u}_{\gamma} \quad\quad \text{and} \quad\quad
v=\sum_{d(\tau)\le q} \textbf{v}_{\tau},
\end{equation}
where $\textbf{u}_{\gamma}$ and $\textbf{v}_{\tau}$ denote, respectively, the
$K$-isotypic componets of $u$ and $v$ corresponding to the classes $\gamma$ and
$\tau$ of $\Gamma_1$. Then using \eqref{d(uv)'} we obtain,
\begin{equation*}\label{d(uv)1}
\begin{split}
d(uv)&=d\Big(\!\sum_{\gamma,\tau}\textbf{u}_{\gamma}\textbf{v}_{\tau}\Big)
\le\max \{d(\textbf{u}_{\gamma}\textbf{v}_{\tau}): \textbf{u}_{\gamma}\neq 0,
\, \textbf{v}_{\tau}\neq 0 \}\\
&\le\max \{d(\textbf{u}_{\gamma})+d(\textbf{v}_{\tau}): \textbf{u}_{\gamma}\neq 0,
\, \textbf{v}_{\tau}\neq 0 \}\\
&=\max \{d(\gamma)+d(\tau): \textbf{u}_{\gamma}\neq 0,\, \textbf{v}_{\tau}\neq 0 \}\\
 &\le p+q = d(u)+d(v).\\
\end{split}
\end{equation*}

\smallskip

Our next goal is to show that $d(uv)=d(u)+d(v)$ for any $u,v\in U(\liek)^M$.
Assume that $d(u)=p$ and $d(v)=q$. Then, using \eqref{K-isotypic} and the fact that
$d(uv)\le d(u)+d(v)$ for any $u,v\in U(\liek)^M$ it follows that,
\begin{equation*}
uv=\sum_{d(\gamma)=p, \,\, d(\tau)=q} \textbf{u}_{\gamma}\textbf{v}_{\tau} + w,
\end{equation*}
where $w\in U(\liek)^M$ is such that $d(w)< p+q$. Then, in view of \eqref{Kostdeg}
we may assume that
\begin{equation}\label{u,v}
u=\sum_{i+2j=p} \textbf{u}_{i,j} \quad\quad \text{and} \quad\quad
v=\sum_{r+2s=q} \textbf{v}_{r,s},
\end{equation}
where $\textbf{u}_{i,j}$ and $\textbf{v}_{r,s}$ denote, respectively, the
$K$-isotypic components of $u$ and $v$ corresponding to the classes $\gamma_{i,j}$
and $\gamma_{r,s}$ of $\Gamma_1$. Let $k=\max \{i\in \NN_o: \textbf{u}_{i,j}\neq 0
\,\,\text{for some}\,\,j \}$ and $\ell=\max \{r\in \NN_o: \textbf{v}_{r,s}\neq 0
\,\,\text{for some}\,\,s \}$. Then using \eqref{u,v}, Leibnitz rule and part (iv) of
Proposition \ref{hw3} it follows that,
\begin{equation}\label{XE(uv)}
\begin{split}
\dot E&^{(p+q-k-\ell)/2}\dot X_{\delta}^{k+\ell}(uv)\\
&=\binom{k+\ell}{\ell}\binom{\frac{p+q-k-\ell}{2}}{\frac{q-\ell}{2}}
\,\dot E^{(p-k)/2}\dot X_{\delta}^{k}(\textbf{u}_{k,\frac{p-k}2})\,
\dot E^{(q-\ell)/2}\dot X_{\delta}^{\ell}(\textbf{v}_{\ell,\frac{q-\ell}2})\ne0.
\end{split}
\end{equation}
We point out that the right hand side of \eqref{XE(uv)} is different from zero
because, in view of (iv) of Proposition \ref{hw3}, it is a product of two dominant
vectors. Also using Leibnitz rule, Proposition \ref{hw3} (iv) and \eqref{XE(uv)}
it follows that,
\begin{equation}\label{X(uv)1}
\dot X_{\delta}^{k+\ell}(uv)=\binom{k+\ell}{\ell}\dot X_{\delta}^{k}(\textbf{u}_{k,\frac{p-k}2})\,
\dot X_{\delta}^{\ell}(\textbf{v}_{\ell,\frac{q-\ell}2})\ne0,
\end{equation}
and
\begin{equation}\label{X(uv)2}
\dot X_{\delta}^{k+\ell+1}(uv)=0.
\end{equation}

\smallskip

To finish the proof write
\begin{equation*}\label{uv}
uv=\sum_{i,j}{\textbf b}_{i,j},
\end{equation*}
where $\textbf{b}_{i,j}$  denote the
$K$-isotypic components of $uv$ corresponding, respectively, to the classes
$\gamma_{i,j}\in \Gamma_1$. Then from \eqref{XE(uv)}, \eqref{X(uv)1} and \eqref{X(uv)2}
we obtain,
$$\dot X_{\delta}^{k+\ell}(uv)=\sum_j\dot X_{\delta}^{k+\ell}({\textbf b}_{k+\ell,j})$$
and
$$0\ne\sum_j\dot E^{(p+q-k-\ell)/2}\dot X_{\delta}^{k+\ell}({\textbf b}_{k+\ell,j}).$$
Therefore, from Proposition \ref{hw3} (iv) it follows that there exists
${\textbf b}_{k+\ell,j}\neq 0$ such that $(p+q-k-\ell)/2+k+\ell\le k+\ell+j$. Thus
$$d(uv)\le d(u)+d(v)=p+q\le k+\ell+2j=d({\textbf b}_{k+\ell,j})\le d(uv).$$
This completes the proof of the proposition. \qed
\end{proof}

\section{Transversality results}\label{trans.}

In this section we prove several results that will allow us to deal with
the congruence modulo $U(\liek)\liem^+$ that occur in the equations that
define the algebra $B$ (see \eqref{eqB}). In particular, we reduce
the congruence modulo $U(\liek)\liem^+$ to a congruence modulo
$U(\liek)\liey$, where $\liey\subset \liem^+$ is the abelian subalgebra
defined as follows
\begin{equation}\label{n}
\liey = \langle\{X_{\widetilde{\epsilon_3}+\widetilde{\epsilon_4}},
X_{\widetilde{\epsilon_2}+\widetilde{\epsilon_3}},
X_{\widetilde{\epsilon_2}+\widetilde{\epsilon_4}}\}\rangle.
\end{equation}
Before stating the main results we introduce the following notation,
\begin{equation}\label{ST}
S_{23}= X_{\widetilde{\epsilon_2}+\widetilde{\epsilon_3}}, \quad
S_{24}= X_{\widetilde{\epsilon_2}+\widetilde{\epsilon_4}},\quad \text{and}
\quad T_{ij}= X_{\widetilde{\epsilon_i}-\widetilde{\epsilon_j}} \quad
(2\le i\neq j \le4).
\end{equation}

Let $\lieq^+$ be the linear span of
$\{X_{\alpha}:\alpha\in\Delta^+(\liek,\lieh_\liek)
\text{\,and\,\,}\alpha\ne\gamma_1 \}$. Since $\gamma_1$ is a simple
root in $\Delta^+(\liek,\lieh_\liek)$ (see \eqref{simroot(k)}) it follows that $\lieq^+$
is a subalgebra of $\liek^+$. We are interested in considering weight
vectors $u\in U(\liek)\liem^+$ of weight $\lambda=a(\gamma_4+\delta)+b\gamma_3$
($a,b\in\ZZ$), and such that $\dot X(u) \equiv 0 \mod(U(\liek)\liey)$
for every $X\in \lieq^+$.

\smallskip

Consider the subalgebra $\lieq \subset \liek$ defined as follows
\begin{equation}\label{q}
\lieq=\lieq^+\oplus\lieh_\lier\oplus\lieq^{-},
\end{equation}
where
\begin{equation}\label{hr}
\lieh_{\lier}=\ker(\gamma_4+\delta)\cap \ker(\gamma_3)=\langle\{H_{\widetilde{\epsilon_3}-\widetilde{\epsilon_4}}, H_{\widetilde{\epsilon_4}-\widetilde{\epsilon_1}}\}\rangle
\end{equation}
and
\begin{equation}\label{q^-}
\lieq^-=\langle\{X_{-(\,\widetilde{\epsilon_3}-\widetilde{\epsilon_4})}\}\rangle.
\end{equation}
Then a simple calculation shows that,
\begin{equation*}\label{[q,n]}
[\lieq,\liey]\subset \liey.
\end{equation*}
Moreover, $\lieq=\lier\oplus\lieu$ where $\lier= \langle \lieh_{\lier} \cup
\{X_{\pm(\,\widetilde{\epsilon_3}-\widetilde{\epsilon_4})}\}\rangle\simeq\liegl(2,\CC)$, $\lieh_{\lier}$ is a Cartan subalgebra of $\lier$ and $\lieu$ is the following nilpotent
subalgebra,
$$\lieu = \langle
\{X_{\widetilde{\epsilon_2}\pm \widetilde{\epsilon_j}}: 3\le j\le
4\}\cup \{X_{\widetilde{\epsilon_i}\pm\widetilde{\epsilon_1}}: 2\le
i\le4 \} \cup \{X_{\gamma_2}, X_{\gamma_3},
X_{\psi_1},X_{\psi_2}\}\rangle,$$
where
\begin{equation}\label{psi}
\psi_1= \tfrac 12(-\widetilde{\epsilon_1}+
\widetilde{\epsilon_2}- \widetilde{\epsilon_3}+
\widetilde{\epsilon_4}), \quad \quad \psi_2=\tfrac
12(-\widetilde{\epsilon_1}+ \widetilde{\epsilon_2}+
\widetilde{\epsilon_3}- \widetilde{\epsilon_4}).
\end{equation}

\smallskip

The proof of the next two lemmas follow from a direct application of
Poincar\'e-Birkhoff-Witt theorem. Let $\lieg$ be an arbitrary finite
dimensional complex Lie algebra and let $\liel$ be a subalgebra of
$\lieg$. If $\{X_1,\dots,X_p\}$ is an ordered basis of $\liel$
complete it to an ordered basis $\{Y_1,\dots,Y_q,X_1,\dots,X_p\}$ of
$\lieg$. Now, if $I=(i_1,\dots,i_q)\in \NN_o^q$ and
$J=(j_1,\dots,j_p)\in \NN_o^p$ define as usual $Y^IX^J=Y_1^{i_1}\cdots
Y_q^{i_q}X_1^{j_1}\cdots X_p^{j_p}$ in $U(\lieg)$. Then we have,
\begin{lem}\label{U(g)l1} Any $u\in U(\lieg)\liel$ can be written in a
unique way as $u=a_1X_1+\cdots+ a_pX_p$ where
$$a_k=\sum a_{I,j_1,\dots,j_k}\,Y^IX_1^{j_1}\cdots X_k^{j_k}
\quad\textstyle{for}\quad k=1,\dots,p,$$
and the coefficients $a_{I,j_1,\dots,j_k}$ are complex numbers.
\end{lem}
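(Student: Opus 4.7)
The plan is to use the Poincar\'e--Birkhoff--Witt (PBW) theorem to expand $u$ in the canonical basis $\{Y^IX^J:I\in\NN_o^q,\,J\in\NN_o^p\}$ of $U(\lieg)$ associated to the ordered basis $\{Y_1,\dots,Y_q,X_1,\dots,X_p\}$, and then to reorganise the monomials according to the largest index of an $X$-generator appearing in each. Every $u\in U(\lieg)$ thus admits a unique expansion $u=\sum c_{I,J}\,Y^IX^J$ with $c_{I,J}\in\CC$, and everything will be read off from this expansion.

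The first substantive step is to identify $U(\lieg)\liel$ inside this basis: I claim that $U(\lieg)\liel$ coincides with the linear span of those PBW monomials $Y^IX^J$ with $J\ne 0$. The inclusion ``$\supseteq$'' is immediate, since if $k$ denotes the largest index with $j_k\ge 1$ then $Y^IX^J=(Y^IX_1^{j_1}\cdots X_k^{j_k-1})X_k\in U(\lieg)\liel$. The reverse inclusion is the one real use of PBW: the induced module $U(\lieg)\otimes_{U(\liel)}\CC$, with $\liel$ acting as zero on $\CC$, has $\{Y^I\otimes 1\}$ as a linear basis, so the kernel of the canonical surjection $U(\lieg)\twoheadrightarrow U(\lieg)/U(\lieg)\liel$ is exactly the span of the monomials with $J\ne 0$. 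I expect this to be the main (and really the only) point at which care is required.

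Granting the claim, existence of the required decomposition is pure bookkeeping. For each nonzero multi-index $J$ set $k(J)=\max\{k:j_k\ge 1\}$; collect the PBW monomials in the expansion of $u\in U(\lieg)\liel$ according to the value of $k(J)$ and factor out one copy of $X_{k(J)}$ on the right. This produces $u=\sum_{k=1}^p a_k X_k$, where $a_k$ is a linear combination of monomials $Y^IX_1^{j_1}\cdots X_k^{j_k}$ (the new $j_k$ being the old $j_k-1\ge 0$), which is exactly the form asserted.

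For uniqueness, I would observe that whenever $a_k$ is written in the prescribed form, the product $a_kX_k$ is a sum of PBW basis elements $Y^IX_1^{j_1}\cdots X_k^{j_k+1}$ whose largest $X$-index is \emph{exactly} $k$. Consequently the contributions of different values of $k$ to $\sum_{k=1}^p a_k X_k$ lie in mutually disjoint subsets of the PBW basis of $U(\lieg)$, so the linear independence of PBW monomials forces each $a_k$, and hence each coefficient $a_{I,j_1,\dots,j_k}$, to be uniquely determined by $u$.
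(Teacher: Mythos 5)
Your proof is correct and spells out the ``direct application of Poincar\'e--Birkhoff--Witt'' that the paper invokes without detail. Identifying $U(\lieg)\liel$ with the span of PBW monomials $Y^IX^J$ having $J\ne 0$ (via the induced module $U(\lieg)\otimes_{U(\liel)}\CC$), and then sorting those monomials by the largest $X$-index before factoring out one $X_{k}$ on the right, is exactly the intended argument, and your disjoint-support observation settles uniqueness.
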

\begin{lem}\label{U(g)l2} Let $\lieg$ and $\liel$ be as above. Let $u\in
U(\lieg)$ and $X\in\lieg-\liel$ be such that $\dot
X(\liel)\subset\liel$. If $uX^n\equiv0\mod(U(\lieg)\liel)$ for some
$n\in\NN$, then $u\equiv0\mod(U(\lieg)\liel).$
\end{lem}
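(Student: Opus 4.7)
The plan is to reduce to Poincar\'e--Birkhoff--Witt uniqueness by choosing the basis appropriately. Since $X\in\lieg\setminus\liel$, its image in $\lieg/\liel$ is nonzero, so I may choose the ordered basis $\{Y_1,\dots,Y_q,X_1,\dots,X_p\}$ of $\lieg$ used in the setup of Lemma \ref{U(g)l1} so that $Y_q=X$. Using that PBW decomposition, I would split $u=u_V+u_\liel$ uniquely, where $u_V\in V:=\mathrm{span}\{Y^I\}$ is a $\CC$-linear combination of pure $Y$-monomials and $u_\liel\in U(\lieg)\liel$.

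The first step is to observe that $U(\lieg)\liel$ is stable under right multiplication by $X$. Indeed, for $Z\in\liel$ the identity $ZX=XZ+[Z,X]$, together with the hypothesis $\dot X(\liel)\subset\liel$ (which gives $[Z,X]\in\liel$), shows that $ZX\in U(\lieg)\liel$; left-multiplying by arbitrary $w\in U(\lieg)$ and iterating yields $U(\lieg)\liel\cdot X^n\subseteq U(\lieg)\liel$ for every $n$. Consequently $u_\liel X^n\in U(\lieg)\liel$, so the hypothesis $uX^n\equiv 0$ reduces to $u_V X^n\equiv 0\pmod{U(\lieg)\liel}$.

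The second step is PBW uniqueness: since $X=Y_q$, multiplying any pure $Y$-monomial $Y_1^{i_1}\cdots Y_q^{i_q}$ on the right by $X^n$ simply increases $i_q$ by $n$, so $u_V X^n$ is again a pure $Y$-monomial sum in $V$. Combining $u_V X^n\in V$ with $u_V X^n\in U(\lieg)\liel$ and the PBW direct sum decomposition $U(\lieg)=V\oplus U(\lieg)\liel$, we conclude $u_V X^n=0$; since $U(\lieg)$ is a domain and $X^n\neq 0$, this forces $u_V=0$, whence $u=u_\liel\in U(\lieg)\liel$.

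There is no real obstacle here: the lemma is essentially an immediate corollary of PBW once $X$ is placed into the transverse part of the basis. The only subtle point is the commutation step showing right $X$-stability of $U(\lieg)\liel$, which is exactly where the hypothesis $\dot X(\liel)\subset\liel$ is used.
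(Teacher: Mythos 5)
Your proof is correct and follows exactly the Poincar\'e--Birkhoff--Witt route the paper indicates (the authors merely remark that Lemmas \ref{U(g)l1} and \ref{U(g)l2} ``follow from a direct application of Poincar\'e--Birkhoff--Witt theorem'' and give no further details). Placing $X=Y_q$, using the direct sum $U(\lieg)=V\oplus U(\lieg)\liel$, checking right $X$-stability of $U(\lieg)\liel$ via $\dot X(\liel)\subset\liel$, and finishing with the fact that $U(\lieg)$ is a domain is precisely the intended argument; if one wishes, the last step can even bypass the domain property by noting that multiplication by $Y_q^n$ permutes the PBW basis of $V$ injectively.
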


Let $\liey^\bot$ be the orthogonal complement of $\liey$ in $\liek$ with
respect to the Killing form of $\liek$. For any $Z\in (\liem^+)^\bot$
consider the linear map $T_Z:\lieq \times (\liem^+)^\bot \rightarrow \liey^\bot$
given by
\begin{equation}\label{Tz}
T_Z(X,Y)=[X,Z]+Y, \quad \quad X \in \lieq \quad \text{and}\quad
Y\in (\liem^+)^\bot .
\end{equation}
Since $[\lieq,\liey]\subset \liey$ and $(\liem^+)^\bot\subset\liey^\bot$ it
follows that Im$(T_Z)\subset\liey^\bot$, where Im$(T_Z)$ denotes the image of
the map $T_Z$. The following proposition will be used in Theorem \ref{m+/n}
to prove one of the main results of this section.

\begin{prop}\label{surj} There exists $Z_o\in(\liem^+)^\bot$ such that
\rm{Im}$(T_{Z_o})=\liey^\bot$.
\end{prop}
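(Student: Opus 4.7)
The plan is to reformulate Proposition \ref{surj} as a surjectivity statement on a small-dimensional quotient, then exhibit an explicit $Z_o$ and verify surjectivity by direct computation. Since $\liey\subset\liem^+$ gives $(\liem^+)^\bot\subset\liey^\bot$, and since the $Y$-component of $T_{Z_o}$ already sweeps out all of $(\liem^+)^\bot$, the statement $\operatorname{Im}(T_{Z_o})=\liey^\bot$ is equivalent to surjectivity of the induced map $\bar T_{Z_o}\colon\lieq\to\liey^\bot/(\liem^+)^\bot$, $X\mapsto[X,Z_o]+(\liem^+)^\bot$. A direct count using $\liek\simeq\lieso(9,\CC)$, $\dim\liem^+=9$ and $\dim\liey=3$ gives $\dim\liey^\bot/(\liem^+)^\bot=6$, with representatives
$$X_{-(\widetilde\epsilon_2-\widetilde\epsilon_3)},\ X_{-(\widetilde\epsilon_2-\widetilde\epsilon_4)},\ X_{-(\widetilde\epsilon_3-\widetilde\epsilon_4)},\ X_{-\gamma_4}-X_{-\delta},\ X_{-\varphi_1}-X_{-\delta_1},\ X_{-\varphi_2}-X_{-\delta_2}.$$
The first three are Killing-form duals to the corresponding elements of $\liem^+$, while the last three are the ``asymmetric'' duals to the vectors $S_i$ appearing in \eqref{m^+}.

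My candidate is the natural one suggested by \eqref{ortog}:
$$Z_o=(X_{-\gamma_4}+X_{-\delta})+(X_{-\varphi_1}+X_{-\delta_1})+(X_{-\varphi_2}+X_{-\delta_2})\in(\liem^+)^\bot.$$
To verify surjectivity, I would exhibit, for each of the six basis directions, an element $X\in\lieq$ whose bracket $[X,Z_o]$ has a nontrivial projection there. The bracket $[X_{\varphi_1},X_{-\gamma_4}]$ gives a scalar multiple of $X_{\widetilde\epsilon_3-\widetilde\epsilon_2}$, which is the first direction; analogous brackets with $X_{\varphi_2}\in\lieq^+$ and $X_{\delta_2}\in\lieq^+$ yield two independent vectors in the plane spanned by the second and third directions. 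To reach the three ``asymmetric'' directions one exploits that $\varphi_i$ and $\delta_i$ differ by $2\widetilde\epsilon_1$: the bracket with $H_{\widetilde\epsilon_4-\widetilde\epsilon_1}\in\lieh_\lier$, for which $\widetilde\epsilon_1(H)\ne0$, projects to the sum of the three asymmetric components, while further brackets with $X_{\widetilde\epsilon_2-\widetilde\epsilon_3},X_{\widetilde\epsilon_3-\widetilde\epsilon_4}\in\lieq^+$ isolate the $i=3$ and $i=4$ components individually, allowing one to solve for all three.

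The main obstacle is the bookkeeping of structure constants. Because the root vectors involved are defined through the Cayley transform (see \eqref{def1}) under the specific normalization \eqref{Xmu}, signs and scalar factors must be propagated carefully through each bracket; surjectivity ultimately amounts to checking that a certain $6\times6$ matrix has nonzero determinant, which is a finite but delicate verification. The abundance of room (namely $\dim\lieq=18$ against a $6$-dimensional target) makes the existence of such a $Z_o$ very plausible, and the natural choice above turns out to work after verifying the sign conditions.
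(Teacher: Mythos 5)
Your reduction to a surjectivity statement on the $6$-dimensional quotient $\liey^\bot/(\liem^+)^\bot$ is correct, as is the basis of representatives and the general strategy; and the first four brackets you list ($[X_{\varphi_1},Z_o]$, $[X_{\varphi_2},Z_o]$, $[X_{\delta_2},Z_o]$, $[H_{\widetilde\epsilon_4-\widetilde\epsilon_1},Z_o]$) do produce nonzero contributions. But there is a fatal gap in the last two brackets. You propose bracketing $Z_o$ with $X_{\widetilde\epsilon_2-\widetilde\epsilon_3}$ and $X_{\widetilde\epsilon_3-\widetilde\epsilon_4}$. Both of these root vectors lie in $\liem^+$ (they appear in the basis \eqref{m^+}). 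Since $\liem^+$ is a subalgebra and $Z_o\in(\liem^+)^\bot$, invariance of the Killing form gives $\langle[X,Z_o],Y\rangle=-\langle Z_o,[X,Y]\rangle=0$ for all $X,Y\in\liem^+$; that is, $[\liem^+,(\liem^+)^\bot]\subset(\liem^+)^\bot$. So $[X_{\widetilde\epsilon_2-\widetilde\epsilon_3},Z_o]$ and $[X_{\widetilde\epsilon_3-\widetilde\epsilon_4},Z_o]$ vanish identically in the quotient $\liey^\bot/(\liem^+)^\bot$, and your six brackets only span at most a $4$-dimensional subspace. (Concretely, for instance $[X_{\widetilde\epsilon_2-\widetilde\epsilon_3},Z_o]$ is a combination $aX_{-\varphi_1}+bX_{-\delta_1}$; the above identity forces $a=b$, so it is a multiple of $X_{-\varphi_1}+X_{-\delta_1}\in(\liem^+)^\bot$ and cannot reach $X_{-\varphi_1}-X_{-\delta_1}$.)

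This is not a bookkeeping issue that can be repaired by adjusting signs: with your $Z_o$ you must bracket with elements of $\lieq$ outside $\liem^+$ to reach the asymmetric directions. The paper does exactly this by using the spin root vectors $X_{\psi_1},X_{\psi_2}\in\lieq^+\setminus\liem^+$ (see \eqref{psi}) and $T_{43}\in\lieq^-$; to make those brackets productive it also augments $Z_o$ with two additional terms, $X_{-\gamma_3}$ and the Cartan element $H_{\widetilde\epsilon_4-\widetilde\epsilon_3}$, so that $[X_{\psi_i},X_{-\gamma_3}]$ lands on $X_{-\varphi_i}$ and $[T_{43},H_{\widetilde\epsilon_4-\widetilde\epsilon_3}]$ lands on $T_{43}$. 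Your ``natural'' candidate for $Z_o$ built purely from the $(\liem^+)^\bot$-generators in \eqref{ortog} is too symmetric: it has no component whose bracket with the spin roots or with $\lieq^-$ can see the remaining directions, and the $\liem^+$-brackets are structurally forbidden from helping. You need to enlarge $Z_o$ roughly as the paper does before the surjectivity check can succeed.
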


\begin{proof} Using \eqref{m^+} and the notation introduced in \eqref{pd1},
\eqref{pd2} and \eqref{ST} it is easy to check that,
\begin{equation}\label{n^bot}
\liey^\bot=(\liem^+)^\bot\oplus\langle \{X_{-\delta},\,X_{-\delta_1},\,
X_{-\delta_2},\,T_{32},\,T_{42},\,T_{43}\}\rangle.
\end{equation}

It is clear, from the definition of $T_Z$, that $(\liem^+)^\bot\subset \text{Im}(T_Z)$
for every $Z\in (\liem^+)^\bot$. Now consider the vector,
\begin{equation}\label{Z_o}
Z_o = X_{-\gamma_4} + X_{-\delta} + X_{-\varphi_2} + X_{-\delta_2} +
X_{-\gamma_3} + H_{\widetilde{\epsilon_4}-\widetilde{\epsilon_3}},
\end{equation}
where $H_{\widetilde{\epsilon_4}-\widetilde{\epsilon_3}}\in \lieh_{\liek}$ is
such that $(\widetilde{\epsilon_4}-\widetilde{\epsilon_3})
(H_{\widetilde{\epsilon_4}-\widetilde{\epsilon_3}})=2$. Using \eqref{m^+} and
\eqref{ortog} it follows that $Z_o\in(\liem^+)^\bot$.
In view of \eqref{n^bot}, to prove that Im$(T_{Z_o})=\liey^\bot$ we need to show
that $\langle \{X_{-\delta},\,X_{-\delta_1},\, X_{-\delta_2},\,T_{32},\,T_{42},
\,T_{43}\}\rangle$ is contained in Im$(T_{Z_o})$. In fact, using that
$X_{\varphi_1}, X_{\varphi_2}, X_{\psi_1}, X_{\psi_2}, H_{\widetilde{\epsilon_4}-\widetilde{\epsilon_1}}$ and $T_{43}$ are in $\lieq$
(see \eqref{pd1}, \eqref{pd2}, \eqref{ST} and \eqref{psi} for the notation) a simple
calculation shows that,
\begin{equation*}
T_{Z_o}(X_{\varphi_2},0)\equiv c_1\, T_{42}, \quad \quad \quad T_{Z_o}(X_{\varphi_1},0)
\equiv c_2\, T_{32},
\end{equation*}

\begin{equation*}
T_{Z_o}(X_{\psi_2},0)\equiv c_3\, X_{-\delta_2},\quad \quad \quad T_{Z_o}(X_{\psi_1},0)
\equiv c_4\, X_{-\delta_1},
\end{equation*}

\begin{equation*}
T_{Z_o}(H_{\widetilde{\epsilon_4}-\widetilde{\epsilon_1}},0)\equiv c_5\,
X_{-\delta},\quad \quad \quad T_{Z_o}(T_{43},0)\equiv c_6\, T_{43},
\end{equation*}
where, in all cases, the congruence is module the subspace $(\liem^+)^\bot$ and
$c_i\neq 0$ for $1\leq i\leq 6$. This completes the proof of the proposition.
\qed
\end{proof}

\begin{thm}\label{m+/n} Let $u\in U(\liek)\liem^+$ be a vector of weight
\mbox{$\lambda=a(\gamma_4+\delta)+b\gamma_3$}, with $a,b\in\ZZ$, and
such that $\dot X(u) \equiv 0 \mod(U(\liek)\liey)$ for every $X \in \lieq^+$.
Then $u\equiv0\mod\!(U(\liek)\liey)$.
\end{thm}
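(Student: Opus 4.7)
My plan is to use Proposition~\ref{surj} (the transversality lemma) together with the PBW theorem to rewrite $u$ modulo $U(\liek)\liey$ in a normal form, and then exploit the $\lieq^+$-invariance hypothesis together with the restriction on the weight $\lambda$ to force that normal form to vanish.

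First, a reduction: since $\liey\subset\liem^+$ there is a vector space decomposition $\liem^+=\liey\oplus W$ where $W=\liem^+\cap\liey^\bot$ is $6$-dimensional. Consequently $U(\liek)\liem^+=U(\liek)\liey+U(\liek)W$, so modulo $U(\liek)\liey$ we may assume that $u=\sum_j a_jw_j$ where $\{w_j\}$ is a fixed basis of $W$ and $a_j\in U(\liek)$. Because $\liey$ is $\lieh_\liek$-stable and $\lieq^+$-stable (indeed $[\lieq,\liey]\subset\liey$), the weight condition and the $\lieq^+$-annihilation condition both pass to the quotient $U(\liek)/U(\liek)\liey$.

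Second, the key use of Proposition~\ref{surj}: for each $w_j\in W\subset\liey^\bot$ the proposition produces $X_j\in\lieq$ and $w'_j\in(\liem^+)^\bot$ with $w_j=[X_j,Z_o]+w'_j$, i.e.\ $w_j=X_jZ_o-Z_oX_j+w'_j$ in $U(\liek)$. Substituting into the expression for $u$ and reordering via PBW (Lemma~\ref{U(g)l1}), together with the fact $[\lieq,\liey]\subset\liey$ which ensures that the commutators produced in the reordering do not leave the $U(\liek)\liey$-congruence class, one obtains a normal form for $u$ modulo $U(\liek)\liey$ in which each monomial has its right-most factor in $\CC Z_o+(\liem^+)^\bot$ rather than in the original $\liem^+$.

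Finally, one applies the hypothesis that $\dot X(u)\equiv 0\pmod{U(\liek)\liey}$ for every $X\in\lieq^+$. A particularly useful choice is $X=E\in\lieq^+$: since $\gamma_3$ added to any root of $\liey$ is not a root, we have $[E,\liey]=0$, putting us in position to apply Lemma~\ref{U(g)l2} with $\liel=\liey$. Combined with the weight condition $\lambda=a(\gamma_4+\delta)+b\gamma_3=2a\tilde\epsilon_2+b\gamma_3$, which imposes a rigid shape on the $\lieh_\liek$-weights of surviving monomials, comparing PBW coefficients forces every term of the normal form to vanish, yielding $u\equiv 0\pmod{U(\liek)\liey}$. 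The main obstacle I expect is the PBW bookkeeping in the normal-form step: the correction terms $w'_j\in(\liem^+)^\bot$ and the commutators $[X_j,Z_o]$ introduce a cascade of lower-order monomials, and one must check that the very specific shape of $\lambda$ --- supported on only two of the fundamental weights of $\liek$ --- is incompatible with any nonzero normal-form element surviving the $\lieq^+$-invariance. This weight analysis, interwoven with the iterated PBW substitutions, is the technical heart of the argument.
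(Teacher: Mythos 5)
Your plan correctly identifies Proposition~\ref{surj} as the crucial transversality input, but the mechanism you propose for exploiting it is not the one the paper uses, and as outlined it does not work. The paper's proof does not manipulate $u$ directly in $U(\liek)$. Instead it argues by contradiction and passes to the symbol: one takes $u$ of minimal filtration degree $r$ among hypothetical counterexamples, maps it to $p_r(u)\in S_r(\liek^*)$ via the Killing-form identification $U_r(\liek)/U_{r-1}(\liek)\simeq S_r(\liek^*)$, and then reads the hypotheses as statements about the vanishing of a polynomial function on $\liek$. That $u\in U(\liek)\liem^+$ forces $p_r(u)$ to vanish on $(\liem^+)^\perp$; the $\lieq^+$-condition, the weight condition (which gives $\dot H(u)=0$ for $H\in\lieh_\lier$), and an $\liesl_2$-argument for the lowering direction $\lieq^-$ together force $p_r(u)$ to vanish on the whole $Q$-orbit $\mathrm{Ad}(Q)(\liem^+)^\perp$. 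Proposition~\ref{surj} then says the differential of $(g,Y)\mapsto\mathrm{Ad}(g)Y$ at $(e,Z_o)$ is onto $\liey^\perp$, so the orbit contains an open subset of $\liey^\perp$ and $p_r(u)$ vanishes identically on $\liey^\perp$. That means $p_r(u)$ lies in the ideal generated by the linear forms dual to $\liey$, which in turn lets one subtract off a term of $U(\liek)\liey$ to drop the filtration degree of $u$, contradicting minimality. None of this filtration-induction machinery appears in your sketch; it is the engine of the whole argument, and the ``PBW bookkeeping'' you flag as the hard step cannot replace it, because without passing to the symbol there is no well-defined notion of ``normal form degree'' on which to induct.

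There is also a concrete error at the very start of your reduction: you set $W=\liem^+\cap\liey^\perp$ and claim $\liem^+=\liey\oplus W$ with $\dim W=6$. But both $\liey$ and $\liem^+$ are spanned by root vectors attached to positive roots, so $\liem^+$ is isotropic for the Killing form and in particular $\liem^+\subset\liey^\perp$. Hence $\liem^+\cap\liey^\perp=\liem^+$ is $9$-dimensional and contains $\liey$; it is not a complement at all. You would need to choose a genuine vector-space complement $W$ of $\liey$ in $\liem^+$, but even then the substitution $w_j=[X_j,Z_o]+w'_j$ gives an identity in $\liek$ which, when plugged into $u=\sum_j a_jw_j$, produces terms such as $a_jX_jZ_o$ whose rightmost PBW factor is the fixed vector $Z_o\in(\liem^+)^\perp$; there is no mechanism by which this brings $u$ closer to $U(\liek)\liey$. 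Finally, your appeal to Lemma~\ref{U(g)l2} via $X=E$ misfires: that lemma requires a relation of the form $u'E^n\equiv 0\pmod{U(\liek)\liey}$ for some $n\ge 1$, but the hypothesis only gives you $\dot E(u)\equiv 0$, and nothing in your construction produces the former from the latter. To repair the argument you would, in effect, have to rediscover the paper's associated-graded/submersion strategy and also use the $\lieh_\lier$- and $\lieq^-$-directions, which your sketch omits entirely.
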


\begin{proof}Let $U(\liek)=\bigcup_{j\ge 0}U_j(\liek)$ be the canonical
ascending filtration of $U(\liek)$. If $v \in U(\liek)$ and $v\ne 0$ define,
\begin{equation}\label{deg}
\text{deg}(v)= \min \{j: v\in U_j(\liek)\,\, \text{and}\,\, v\notin
U_{j-1}(\liek)\},
\end{equation}
where it is understood that  $U_{-1}(\liek)=\{0\}$.
Let $\calS$ be the set of all $v \in U(\liek)\liem^+$ of weight $\lambda =
a(\gamma_4+\delta)+b\gamma_3$ ($a,b\in\ZZ$), so that $\dot X(v)\in U(\liek)\liey$
for every $X \in \lieq^+$ and $v\notin U(\liek)\liey$. The theorem will be
proved if we show that $\calS = \emptyset$. Assume on the contrary that
$\calS \neq \emptyset$ and choose $u\in \calS$ such that
$\text{deg}(u)= \min \{\text{deg}(v): v\in \calS\}$. Set $r=\text{deg}(u)$
and let $p_r:U_r(\liek) \rightarrow U_r(\liek)/U_{r-1}(\liek)$ denote the
quotient map. The map $p_r$ intertwines the representations of $K$ on $U_r(\liek)$
and on $U_r(\liek)/U_{r-1}(\liek)$, and since $u\notin U_{r-1}(\liek)$ we have
$p_r(u)\ne 0$.

\smallskip

Let $S(\liek)$ be the symmetric algebra of $\liek$ and let $S(\liek^*)$ denote
the algebra of polynomial functions on $\liek$. Let $S_r(\liek)$ and
$S_r(\liek^*)$ denote the corresponding homogeneous subspaces of $S(\liek)$ and
$S(\liek^*)$ of degree $r$. There is an algebra isomorphism between
$S(\liek)$ and $S(\liek^*)$ defined by the Killing form of $\liek$, this
isomorphism maps $S_r(\liek)$ onto $S_r(\liek^*)$ and intertwines the canonical
representations of $K$ on $S_r(\liek)$ and on $S_r(\liek^*)$. Composing this
isomorphism with the natural $K$-isomorphism between $U_r(\liek)/U_{r-1}(\liek)$ and
$S_r(\liek)$ we obtain a $K$-isomorphism,
\begin{equation}\label{K-iso}
U_r(\liek)/U_{r-1}(\liek)\simeq S_r(\liek^*).
\end{equation}
Hence we can think of $p_r(u)$ as a homogeneous polynomial function
on $\liek$ of degree r, and regard $p_r$ as a $K$-homomorphism from $U_r(\liek)$
to $S_r(\liek^*)$.

\smallskip

Let $(\liem^+)^\bot$ be the orthogonal complement of $\liem^+$ in $\liek$
with respect to the Killing form of $\liek$. Since $u \in U(\liek)\liem^+$
and the isomorphism given in \eqref{K-iso} is defined by the Killing form of
$\liek$ it follows that,
\begin{equation}\label{p(u)(Y)}
p_r(u)(Y)=0 \quad \text{for every}\quad Y \in (\liem^+)^\bot.
\end{equation}

Now let $X\in \lieq^+$. Since $[\lieq^+,\liey]\subset \liey$ we have
$\dot X^k(U(\liek)\liey)\subset U(\liek)\liey$ for every $k\in \NN$. Then, since
by hypothesis $\dot X(u)\in U(\liek)\liey$, it follows that $\dot X^k(u)\in U(\liek)\liey$
for any $k\in \NN$. Therefore, using that $(\liem^+)^\bot\subset \liey^\bot$ and
that $p_r$ is a $K$-homomorphism it follows by induction on $k$ that
\begin{equation}\label{X^k(p(u))}
X^k(p_r(u))(Y)=p_r(\dot X^k(u))(Y)=0 \,\,\, \text{for}\,\,\, Y \in (\liem^+)^\bot
\,\,\, \text{and}\,\,\, X \in \lieq^+,
\end{equation}
where $X(p_r(u))$ denotes the action of $X$ on the polynomial function
$p_r(u)$.

\smallskip

Since $u$ is a vector of weight $\lambda=a(\gamma_4+
\delta)+b\gamma_3$, it follows from the definition of $\lieh_{\lier}$ that
$\dot H(u)=0$ for every $H\in\lieh_{\lier}$. Then,
\begin{equation}\label{H^k(p(u))}
H^k(p_r(u))(Y)=0 \,\,\, \text{for} \,\,\, Y\in \liek, \,\,\,
H\in\lieh_{\lier} \,\,\, \text{and}\,\,\, k\in \NN.
\end{equation}

\smallskip

Let $0\ne\overline{u}\in U(\liek)/U(\liek)\liey$ be the image of $u$ under the
quotient map. Normalize $X_{\widetilde{\epsilon_3}-\widetilde{\epsilon_4}}$
and $X_{-(\,\widetilde{\epsilon_3}-\widetilde{\epsilon_4})}$ so that
$\{X_{\widetilde{\epsilon_3}-\widetilde{\epsilon_4}},H_{\widetilde{\epsilon_3}
-\widetilde{\epsilon_4}}, X_{-(\,\widetilde{\epsilon_3}-\widetilde{\epsilon_4})}\}$
is an $\lies$-triple.  Since $X_{\widetilde{\epsilon_3}-\widetilde{\epsilon_4}}
\in \lieq^+$ and  $H_{\widetilde{\epsilon_3}-\widetilde{\epsilon_4}}\in \lieh_{\lier}$,
and by hypothesis $\overline{u}$ is a dominant vector of weight zero with
respect to above $\lies$-triple, we obtain that
$\dot X_{-(\,\widetilde{\epsilon_3}-\widetilde{\epsilon_4})}(\overline{u})=0$.
Hence, from \eqref{q^-} we obtain that $\dot X(u)\in U(\liek)\liey$ for $X\in
\lieq^{-}$. Then, since $[\lieq^-,\liey]\subset \liey$, it follows that
\begin{equation}\label{X_^k(p(u))}
X^k(p_r(u))(Y)=0 \,\,\, \text{for}\,\,\, Y \in (\liem^+)^\bot,
\,\,\, X \in \lieq^- \,\,\, \text{and}\,\,\, k\in \NN.
\end{equation}

\smallskip

Now recall that for $k\in K$ and $f\in S_r(\liek^*)$ the action of $k$ on $f$ is
given by $(kf)(Y)=f(Ad(k^{-1})Y)$ for every $Y \in \liek$ . Then, from \eqref{p(u)(Y)}, \eqref{X^k(p(u))}, \eqref{H^k(p(u))} and \eqref{X_^k(p(u))} it follows that
\begin{equation}\label{p(u)(Ad(X))}
p_r(u)(Ad(\exp X)Y)=0 \quad \text{for}\quad X \in \lieq^+ \cup \lieh_{\lier} \cup \lieq^-
\quad \text{and}\quad Y\in (\liem^+)^\bot .
\end{equation}
Let $Q$ be the connected Lie subgroup of $K$ with Lie algebra $\lieq$ (see \eqref{q}).
Since the set $\exp\lieq^+.\exp\lieh_{\lier}.\exp\lieq^-$ generates $Q$ we obtain that,
\begin{equation}\label{Q}
p_r(u)(Ad(g)Y)=0 \quad \text{for}\quad g \in Q \quad \text{and}\quad
Y\in (\liem^+)^\bot .
\end{equation}

\smallskip

Now consider the map $\Phi:Q \times (\liem^+)^\bot \rightarrow \liey^\bot$ defined by
$\Phi(g,Y)=Ad(g)Y$. The fact that the image of $\Phi$ is contained in $\liey^\bot$
follows from a simple calculation using that $[\lieq,\liey]\subset \liey$ and that
$\liey \subset \liem^+$. Let $e\in Q$ be the identity element and $Z\in (\liem^+)^\bot$,
then $(d\Phi)_{(e,Z)}$ is the map $T_Z:\lieq \times (\liem^+)^\bot \rightarrow \liey^\bot$
defined in \eqref{Tz}. It follows from Proposition \ref{surj} that $(d\Phi)_{(e,Z_o)}$ is  surjective. This implies that the image of $\Phi$ contains an open set of $\liey^\bot$,
then in view of \eqref{Q} we obtain that,
\begin{equation}\label{p(u)=0}
p_r(u)(Y)=0 \quad \text{for every}\quad Y\in \liey^\bot.
\end{equation}

\smallskip

Recall that $\liey = \langle\{X_2,S_{23},S_{24}\}\rangle$ (see \eqref{n}). Extend the
basis of $\liey$  to a basis $\calB=\{Z_1,\cdots,Z_q,X_2,S_{23},S_{24}\}$ of $\liek$,
where $q=\dim\liek-3$.
If $I=(i_1,\dots,i_q)\in \NN_o^q$ and $J=(j_1,j_2,j_3)\in \NN_o^3$, set
$|I|=i_1+\dots+i_q$, $|J|=j_1+j_2+j_3$ and $Z^I=Z_1^{i_1}\dots Z_q^{i_q}$
in $S(\liek)$. If we regard $p_r(u)$ as an element in $S_r(\liek)$ we can write
\begin{equation*}
p_r(u)=\sum b_{I,J}\, Z^I X_2^{j_1}S_{23}^{j_2}S_{24}^{j_3},
\end{equation*}
where $b_{I,J}\in \CC$ and the sum extends over all $I$ and $J$ such that
$|I|+|J|=r$. Now, identifying $\liek^*$ with $\liek$ via the Killing form of
$\liek$ and considering a basis $\widetilde\calB$ of $\liek$ dual to $\calB$
it follows from \eqref{p(u)=0} that $b_{I,0}=0$, for all $I$ such that $|I|=r$.
Therefore
\begin{equation}\label{p(u)}
p_r(u)=\sum_{|J|>0} b_{I,J}\, Z^I X_2^{j_1}S_{23}^{j_2}S_{24}^{j_3},
\end{equation}
where the sum extends over all $I$ and $J$ such that $|I|+|J|=r$. On the other hand,
since $p_r$ is a $K$-homomorphism from $U_r(\liek)$ to $S_r(\liek)$ it follows that
$p_r(u)$ has weight $\lambda=a(\gamma_4+\delta)+b\gamma_3$ with respect to
$\lieh_{\liek}$. Then, \eqref{p(u)} implies that
\begin{equation}\label{u'}
u=\sum_{|J|>0} b_{I,J}\, Z^I X_2^{j_1}S_{23}^{j_2}S_{24}^{j_3} + u',
\end{equation}
where the monomials $Z^I X_2^{j_1}S_{23}^{j_2}S_{24}^{j_3}$ are in $U(\liek)$, the
sum extends over all $I$ and $J$ such that $|I|+|J|=r$ and $u'$ is a vector of weight
$\lambda$ in $U_{r-1}(\liek)$. Moreover, since the sum in the first term of \eqref{u'}
is a vector in $U(\liek)\liey$ and $\dot X(U(\liek)\liey)\subset U(\liek)\liey$ for
$X \in \lieq^+$, it follows by hypothesis that $\dot X(u')\in U(\liek)\liey$
for every $X \in \lieq^+$. Also, since $u\in U(\liek)\liem^+$ and
$u\notin U(\liek)\liey$ the same facts hold for $u'$, therefore $u'\in\calS$.
This is a contradiction since $\text{deg}(u')<\text{deg}(u)$. Then $\calS = \emptyset$
and the proof of the theorem is completed.\qed
\end{proof}

\smallskip

\begin{cor}\label{q+dom} Let $u\in U(\liek)\liem^+$ be a $\lieq^+$-dominant vector
of weight $\lambda=a(\gamma_4+\delta)+b\gamma_3$ with $a,b\in\ZZ$. Then
$u\in U(\liek)\liey$.
\end{cor}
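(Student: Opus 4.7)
This corollary is designed to be a direct consequence of Theorem \ref{m+/n}, so the plan is simply to check that the hypotheses of that theorem are met and then to invoke it.

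First, I would observe that the weight hypothesis on $u$ in the corollary, namely $\lambda=a(\gamma_4+\delta)+b\gamma_3$ with $a,b\in\ZZ$, coincides verbatim with the weight hypothesis in Theorem \ref{m+/n}. Likewise, the assumption $u\in U(\liek)\liem^+$ is common to both statements.

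Next, I would compare the derivation conditions. Being $\lieq^+$-dominant means $\dot X(u)=0$, as an actual equality in $U(\liek)$, for every $X\in\lieq^+$. This is strictly stronger than the condition $\dot X(u)\equiv 0\mod(U(\liek)\liey)$ required by Theorem \ref{m+/n}, so the latter is automatically satisfied.

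Applying Theorem \ref{m+/n} therefore yields $u\equiv 0\mod(U(\liek)\liey)$, which is precisely the desired conclusion $u\in U(\liek)\liey$. There is no substantive obstacle to overcome here; the corollary is just a convenient packaging of the theorem in the form in which it will be applied later in the paper (typically to $\liek^+$-dominant vectors, which are in particular $\lieq^+$-dominant).
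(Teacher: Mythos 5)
Your proposal is correct and matches the paper's intent: the corollary is stated without a separate proof precisely because it is the immediate special case of Theorem~\ref{m+/n} obtained by noting that $\lieq^+$-dominance ($\dot X(u)=0$ for all $X\in\lieq^+$) is stronger than the congruence hypothesis $\dot X(u)\equiv 0\mod(U(\liek)\liey)$. Nothing further is needed.
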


\smallskip

Next theorem will be used in an important way in Section \ref{mainproof}.
Its proof is similar to that of Theorem \ref{m+/n}. Consider the following subalgebra
of $\liek$,
\begin{equation}\label{qtilde}
\widetilde\lieq=\{X\in \liek: \dot X(V_\gamma^{\liek^+})=0 \,\,\,\text{for every}
\,\,\,\gamma \in\Gamma_1\}.
\end{equation}
It is easy to see that,
\begin{equation*}\label{qtilde1}
\widetilde\lieq = \liek^+\oplus \lieh_{\lier} \oplus
\langle\{X_{-\widetilde{\epsilon_3}+\widetilde{\epsilon_4}},
X_{-\widetilde{\epsilon_4}+\widetilde{\epsilon_1}},
X_{-\widetilde{\epsilon_3}+\widetilde{\epsilon_1}}\} \rangle,
\end{equation*}
where $\lieh_{\lier}$ is as in \eqref{hr}. Let $\widetilde Q$ denote the connected
Lie subgroup of $K$ with Lie algebra $\widetilde\lieq$.

\smallskip

If $Z\in (\liem^+)^\bot$ consider the linear map
$\widetilde{T}_Z:\widetilde\lieq \times(\liem^+)^\bot \rightarrow \liek$
given by
\begin{equation}\label{tildeTz}
\widetilde{T}_Z(X,Y)=[X,Z]+Y, \quad \quad X \in \widetilde\lieq
\quad \text{and}\quad Y\in (\liem^+)^\bot.
\end{equation}
Next proposition is the analogue of Proposition \ref{surj} and will be
used in the proof of Theorem \ref{k+dom}.

\smallskip

\begin{prop}\label{surj1} If $Z_o \in (\liem^+)^\bot$ is as in \eqref{Z_o} it follows that
\rm{Im}$(\widetilde{T}_{Z_o})=\liek$.
\end{prop}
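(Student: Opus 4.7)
The plan is to mimic the argument of Proposition \ref{surj}. From the description of $\widetilde\lieq$ and that of $\lieq$ in \eqref{q}, one checks directly that $\lieq\subset\widetilde\lieq$, so Proposition \ref{surj} already provides $\liey^\bot\subset\mathrm{Im}(\widetilde T_{Z_o})$. Since the Killing form identifies $\liek/\liey^\bot$ with the dual of $\liey=\langle X_2,S_{23},S_{24}\rangle$, this quotient is $3$-dimensional with natural representatives $\{X_{-\gamma_2},X_{-S_{23}},X_{-S_{24}}\}$. Thus it suffices to exhibit three vectors in $\widetilde\lieq$ whose brackets with $Z_o$ are linearly independent modulo $\liey^\bot$.

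The three vectors in $\widetilde\lieq$ that do not lie in $\lieq$ are $X_{\gamma_1}$ (the unique positive compact root vector missing from $\lieq^+$) together with $X_{-\delta_1}$ and $X_{-\delta_2}$ (the extra negative root vectors added to $\lieq^-$). For each of them I would carry out the explicit commutator with the six summands of $Z_o$ in \eqref{Z_o}, discarding every term that is zero, not a root vector, or already in $\liey^\bot$. Since by \eqref{n^bot} the subspace $\liey^\bot$ equals $(\liem^+)^\bot\oplus\langle X_{-\delta},X_{-\delta_1},X_{-\delta_2},T_{32},T_{42},T_{43}\rangle$, the only summands capable of surviving modulo $\liey^\bot$ are those whose resulting root is $-\gamma_2$, $-S_{23}$, or $-S_{24}$.

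A short root-by-root inspection, using $\gamma_3=\gamma_1+\gamma_2$, $\gamma_4=\varphi_2+\delta_2=\varphi_1+\delta_1$, and the explicit root data of F$_4$, yields
\begin{align*}
[X_{\gamma_1},Z_o]&\equiv c_1\,X_{-\gamma_2} &&\pmod{\liey^\bot},\\
[X_{-\delta_1},Z_o]&\equiv c_2\,X_{-S_{23}}+c_3\,X_{-\gamma_2} &&\pmod{\liey^\bot},\\
[X_{-\delta_2},Z_o]&\equiv c_4\,X_{-S_{24}} &&\pmod{\liey^\bot},
\end{align*}
where the contributions come respectively from $[X_{\gamma_1},X_{-\gamma_3}]\in\CC X_{-\gamma_2}$; from $[X_{-\delta_1},X_{-\gamma_4}]\in\CC X_{-S_{23}}$ together with $[X_{-\delta_1},X_{-\varphi_2}]\in\CC X_{-\gamma_2}$; and from $[X_{-\delta_2},X_{-\gamma_4}]\in\CC X_{-S_{24}}$. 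All other summands either produce non-roots, land in $(\liem^+)^\bot$, or hit one of the six vectors already absorbed in $\liey^\bot$.

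The main remaining obstacle, as in Proposition \ref{surj}, is to verify that the structure constants $c_1,c_2,c_4$ are nonzero under the normalizations fixed in Section \ref{F4}; this is routine since each of the corresponding root sums $\gamma_1+(-\gamma_3)=-\gamma_2$, $(-\delta_1)+(-\gamma_4)=-S_{23}$, and $(-\delta_2)+(-\gamma_4)=-S_{24}$ is a root, so the associated $N_{\alpha,\beta}$ are nonzero. In the ordered basis $(X_{-\gamma_2},X_{-S_{23}},X_{-S_{24}})$ the three residues then form the rows of a lower-triangular matrix with diagonal $(c_1,c_2,c_4)$, hence span $\liek/\liey^\bot$. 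Combined with $\liey^\bot\subset\mathrm{Im}(\widetilde T_{Z_o})$, this proves $\mathrm{Im}(\widetilde T_{Z_o})=\liek$.
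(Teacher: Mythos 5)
Your proof is correct and takes essentially the same route as the paper: it first notes $\lieq\subset\widetilde\lieq$ to reuse Proposition \ref{surj} for $\liey^\bot$, identifies the $3$-dimensional complement $\langle X_{-\widetilde\epsilon_2-\widetilde\epsilon_3},\,X_{-\widetilde\epsilon_2-\widetilde\epsilon_4},\,X_{-\widetilde\epsilon_3-\widetilde\epsilon_4}\rangle$, and then brackets $Z_o$ against exactly the three extra vectors $X_{\gamma_1},\,X_{-\delta_1}=X_{-\widetilde\epsilon_3+\widetilde\epsilon_1},\,X_{-\delta_2}=X_{-\widetilde\epsilon_4+\widetilde\epsilon_1}$ of $\widetilde\lieq$, matching the paper's computation term by term (including the extra $X_{-\widetilde\epsilon_3-\widetilde\epsilon_4}$-component in $[X_{-\delta_1},Z_o]$). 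The only blemish is notational: $S_{23},S_{24}$ are root vectors, not roots, so writing $X_{-S_{23}}$ should be $X_{-\widetilde\epsilon_2-\widetilde\epsilon_3}$ and similarly for $X_{-S_{24}}$.
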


\begin{proof} Using the definition of $\liey$ (see \eqref{n}) it is easy to see that,
\begin{equation}\label{liek}
\liek=\liey^\bot \oplus \langle \{X_{-\widetilde{\epsilon_2}-\widetilde{\epsilon_3}},
\,X_{-\widetilde{\epsilon_2}-\widetilde{\epsilon_4}},\,
X_{-\widetilde{\epsilon_3}-\widetilde{\epsilon_4}}\}\rangle.
\end{equation}
Now, since $\lieq \subset \widetilde\lieq$ it follows from Proposition \ref{surj} that, $$\widetilde{T}_{Z_o}\big(\lieq \times (\liem^+)^\bot\big) =
T_{Z_o}\big(\lieq \times (\liem^+)^\bot\big) = \liey^\bot.$$
Hence, it follows from \eqref{liek} that to complete the proof we need to show that $X_{-\widetilde{\epsilon_2}-\widetilde{\epsilon_3}}$,\,
$X_{-\widetilde{\epsilon_2}-\widetilde{\epsilon_4}}$, and \,$X_{-\widetilde{\epsilon_3}-\widetilde{\epsilon_4}}$
are in the image of $\widetilde{T}_{Z_o}$. In fact, a simple calculation shows that,
\begin{equation}
\widetilde{T}_{Z_o}\big(X_{-\widetilde{\epsilon_4}+\widetilde{\epsilon_1}},0\big)
\equiv a_1\, X_{-\widetilde{\epsilon_2}-\widetilde{\epsilon_4}},\quad \quad \quad \widetilde{T}_{Z_o}\big(X_{\gamma_1},0\big)\equiv a_2\, X_{-\widetilde{\epsilon_3}-
\widetilde{\epsilon_4}},
\end{equation}
and
\begin{equation}
\widetilde{T}_{Z_o}\big(X_{-\widetilde{\epsilon_3}+\widetilde{\epsilon_1}},0\big)\equiv
a_3\, X_{-\widetilde{\epsilon_2}-\widetilde{\epsilon_3}} +
a_4\, X_{-\widetilde{\epsilon_3}-\widetilde{\epsilon_4}},
\end{equation}
where, in all cases, the congruence is module the subspace $\liey^\bot$ and the constants
$a_i$ are nonzero for $1\leq i\leq 4$. This completes the proof.\qed
\end{proof}

\smallskip

\begin{thm}\label{k+dom} Let $u\in U(\liek)\liem^+$ be a $\liek^+$-dominant vector
of weight $\lambda=a(\gamma_4+\delta)+b\gamma_3$ with $a,b\in\NN_o$. Then
$u=0$.
\end{thm}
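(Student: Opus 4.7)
The plan is to argue by contradiction, mirroring the proof of Theorem \ref{m+/n}. Suppose $u\ne 0$ satisfies the hypotheses, and choose such a $u$ of minimal filtration degree $r=\deg(u)$ in the canonical filtration $U(\liek)=\bigcup_j U_j(\liek)$. I would then apply the $K$-equivariant graded map
\[
p_r\colon U_r(\liek)\longrightarrow U_r(\liek)/U_{r-1}(\liek)\simeq S_r(\liek^*)
\]
(using the Killing form identification, exactly as in the proof of Theorem \ref{m+/n}), so that $p_r(u)$ is a nonzero homogeneous polynomial of degree $r$ on $\liek$. The goal is to contradict this by showing $p_r(u)\equiv 0$ on $\liek$.

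First, since $u\in U(\liek)\liem^+$, the polynomial $p_r(u)$ vanishes on $(\liem^+)^\bot$, exactly as in Theorem \ref{m+/n}. Next, because $p_r$ is $K$-equivariant and $u$ is $\liek^+$-dominant of weight $\lambda=a(\gamma_4+\delta)+b\gamma_3$, the image $p_r(u)$ is itself $\liek^+$-dominant of weight $\lambda$ in $S_r(\liek^*)$. With $a,b\in\NN_o$, Proposition \ref{hw3}(i), (iii) identifies $\lambda=\xi_{2a,b}$ as the highest weight of a class $\gamma_{2a,b}\in\Gamma_1$. Hence $p_r(u)$ lies in the $\liek^+$-fixed part of the $\gamma_{2a,b}$-isotypic subspace of $S_r(\liek^*)$, and the defining property \eqref{qtilde} of $\widetilde\lieq$ gives $\dot X(p_r(u))=0$ for every $X\in\widetilde\lieq$.

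Iterating and exponentiating then upgrades this to $\widetilde Q$-invariance:
\[
p_r(u)(Ad(g)Y)=0\qquad\text{for every } g\in\widetilde Q \text{ and } Y\in(\liem^+)^\bot.
\]
Next, I would consider the smooth map $\Phi\colon\widetilde Q\times(\liem^+)^\bot\to\liek$, $\Phi(g,Y)=Ad(g)Y$. Its differential at $(e,Z_o)$, with $Z_o$ as in \eqref{Z_o}, is precisely the map $\widetilde T_{Z_o}$ of \eqref{tildeTz}, which by Proposition \ref{surj1} is surjective onto $\liek$. Therefore the image of $\Phi$ contains a nonempty open subset of $\liek$. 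The homogeneous polynomial $p_r(u)$ vanishes on this open set, hence vanishes identically on $\liek$, contradicting $\deg(u)=r$; this forces $u=0$.

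The main subtlety (rather than a hard obstacle) is the verification in the second paragraph that $\widetilde\lieq$ really annihilates $p_r(u)$: one must check that a $\liek^+$-dominant vector in $S_r(\liek^*)$ of weight $\xi_{2a,b}$ necessarily lies in the $\gamma_{2a,b}$-isotypic component, and that annihilation by $\widetilde\lieq$ of the (one-dimensional) highest-weight line of $\gamma_{2a,b}$ propagates to every copy occurring in $S_r(\liek^*)$, since this property is intrinsic to the isomorphism class. Once this is in place, the sharper transversality of Proposition \ref{surj1}---whose image is all of $\liek$, in contrast to the smaller $\liey^\bot$ appearing in Theorem \ref{m+/n}---closes the argument in one shot, and no descent-in-degree iteration on a residual vector $u'$ is needed.
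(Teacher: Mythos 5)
Your proposal is correct and follows essentially the same strategy as the paper's proof: pass to the symbol $p_r(u)\in S_r(\liek^*)$ via the Killing-form identification, note that $p_r(u)$ vanishes on $(\liem^+)^\bot$, use the $\widetilde\lieq$-annihilation and the surjectivity of $\widetilde T_{Z_o}$ (Proposition \ref{surj1}) to conclude $p_r(u)\equiv 0$, hence a contradiction. The only cosmetic difference is that the paper first records $\dot X(u)=0$ for $X\in\widetilde\lieq$ at the level of $u\in U(\liek)$ (appealing to Proposition \ref{hw3} and the definition \eqref{qtilde}) and then pushes that forward with the $K$-equivariance of $p_r$, whereas you apply the annihilation directly to $p_r(u)$; your closing remark resolving the isotypic/propagation subtlety is exactly the point that makes both formulations equivalent. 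You also correctly observe that, unlike in Theorem \ref{m+/n}, no minimal-degree descent on a residual $u'$ is needed here, since Proposition \ref{surj1} already delivers all of $\liek$.
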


\begin{proof}Let $U(\liek)=\bigcup_{j\ge 0}U_j(\liek)$ and let $u\in U(\liek)\liem^+$
be a  $\liek^+$-dominant vector of weight $\lambda=a(\gamma_4+\delta)+b\gamma_3$
with $a,b\in\NN_o$. Assume that $u\neq 0$ and set $r=\text{deg}(u)$ (see \eqref{deg}).
Let $p_r:U_r(\liek)\rightarrow S_r(\liek^*)$
be the $K$-homomorphism defined in the proof of Theorem \ref{m+/n}. Observe that
$p_r(u)\ne 0$ because $u\notin U_{r-1}(\liek)$.

\smallskip

Since $u \in U(\liek)\liem^+$, and the $K$-homomorphism $p_r:U_r(\liek)\rightarrow
S_r(\liek^*)$ is defined via the Killing form of $\liek$, it follows that
\begin{equation}\label{p(u)Y}
p_r(u)(Y)=0 \quad \text{for every}\quad Y \in (\liem^+)^\bot.
\end{equation}

\smallskip

Also, since $u$ is a $\liek^+$-dominant vector of weight
$\lambda=a(\gamma_4+\delta)+b\gamma_3$, it follows from Proposition \ref{hw3} that $u\in
V_\gamma^{\liek^+}$ for $\gamma\in\Gamma_1$ with highest weight $\lambda$. Hence $\dot X(u)=0$ for every
$X\in\widetilde\lieq$. Then since $p_r$ is a $K$-homomorphism we have
\begin{equation}\label{X^k(p(u))1}
X^k(p_r(u))(Y)=p_r(\dot X^k(u))(Y)=0 \quad \text{for}\,\,\, X\in\widetilde\lieq,
\,\,\, Y\in \liek \,\,\,\text{and}\,\,\, k\in \NN.
\end{equation}
Now, since $\{\exp X: X\in \widetilde\lieq\}$ generates $\widetilde Q$,
it follows from \eqref{p(u)Y} and \eqref{X^k(p(u))1} that
\begin{equation*}\label{tildeQ}
p_r(u)(Ad(g)Y)=0 \quad \text{for}\quad g \in \widetilde Q \quad \text{and}\quad
Y\in (\liem^+)^\bot.
\end{equation*}
That is, $p_r(u)$ vanishes on the image of the map
$\widetilde\Phi:\widetilde Q \times (\liem^+)^\bot \rightarrow \liek$ defined by $\widetilde\Phi(g,Y)=Ad(g)Y$. Now, if $e\in \widetilde Q$ is the
identity element and $Z\in (\liem^+)^\bot$, then $(d\widetilde\Phi)_{(e,Z)} = \widetilde{T}_Z:\widetilde \lieq \times(\liem^+)^\bot \rightarrow \liek$.
Then it follows from Proposition \ref{surj1} that $(d\widetilde\Phi)_{(e,Z_o)}$
is  surjective. This implies that the image of $\widetilde\Phi$ contains an open
set of $\liek$, hence $p_r(u)=0$ as a polynomial function on $\liek$, which
is a contradiction. Therefore $u=0$ as we wanted to prove.\qed
\end{proof}

\medskip

Before stating the next results we define the following subalgebra of $\liek$,
\begin{equation}\label{s}
\lies=\liek^{-}\oplus\lieh_\liek\oplus\langle\{X_{\widetilde{\epsilon_3}+
\widetilde{\epsilon_1}}, X_{\widetilde{\epsilon_4}+\widetilde{\epsilon_1}}, T_{34},
X_1\}\rangle.
\end{equation}

The following result is the analogue of Proposition 4.9 of \cite {BCT}.
Although its proof uses the same idea as that of Proposition 4.9
we include it here because of some technical differences.

\begin{prop}\label{u+vE}
Let $u_0,u_1\in U(\liek)$ be such that $\dot X_1(u_0)=\dot
X_1(u_1)=0$. If $u_0+u_1E\equiv0\mod(U(\liek)\liey)$ then
$u_0\equiv u_1 \equiv0 \mod(U(\liek)\liey)$.
\end{prop}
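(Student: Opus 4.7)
The plan is to adapt the strategy of Proposition 4.9 of \cite{BCT} to the F$_4$ setting, exploiting that the abelian subalgebra $\liey=\langle X_2,S_{23},S_{24}\rangle$ sits in a very special position relative to the $\sl$-triple $\{H_1,X_1,X_{-1}\}$ and the root vector $E=X_{\gamma_3}$. The crucial structural observations, verified by direct inspection of the root data, are:
\begin{equation*}
[X_{-1},\liey]=0,\qquad [E,\liey]=0,\qquad [X_{-1},E]=2X_2\in\liey,\qquad \dot X_{-1}^{\,2}(E)=0.
\end{equation*}
These imply that both $\dot X_{-1}$ and right multiplication by $E$ preserve the left ideal $U(\liek)\liey$, so the congruence $u_0+u_1E\equiv 0\mod(U(\liek)\liey)$ can be iteratively differentiated and manipulated without leaving the quotient.

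The first step is to apply $\dot X_{-1}^{\,k}$ to the given congruence. Since $\dot X_{-1}^{\,j}(E)=0$ for $j\geq 2$, Leibnitz gives
\begin{equation*}
\dot X_{-1}^{\,k}(u_0)+\dot X_{-1}^{\,k}(u_1)E+2k\,\dot X_{-1}^{\,k-1}(u_1)X_2\equiv 0\mod(U(\liek)\liey),
\end{equation*}
and the cross term $2k\,\dot X_{-1}^{\,k-1}(u_1)X_2$ already belongs to $U(\liek)\liey$. This produces the family of congruences $\dot X_{-1}^{\,k}(u_0)+\dot X_{-1}^{\,k}(u_1)E\equiv 0\mod(U(\liek)\liey)$ for every $k\ge0$. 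Next, decompose $u_0,u_1\in\ker\dot X_1$ into $H_1$-weight components; since $\gamma_3(H_1)=\gamma_1(H_1)+\gamma_2(H_1)=2-2=0$, each weight component of $u_0+u_1E$ must separately vanish mod $U(\liek)\liey$, so without loss of generality we may assume $u_0,u_1$ are highest weight vectors of a fixed $H_1$-weight $\lambda\ge0$ for the $\sl$-triple $\{H_1,X_1,X_{-1}\}$. In particular $\dot X_{-1}^{\,\lambda+1}(u_j)=0$, giving a system of $\lambda+1$ nontrivial congruences.

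The final step is to invoke Lemma \ref{U(g)l2} with $X=E$: since $[E,\liey]=0\subset\liey$ and $E\notin\liey$, the lemma guarantees that $uE^n\equiv 0\mod(U(\liek)\liey)$ implies $u\equiv 0\mod(U(\liek)\liey)$. Thus, once we establish $u_0\equiv 0\mod(U(\liek)\liey)$, the original relation becomes $u_1E\equiv 0\mod(U(\liek)\liey)$, and Lemma \ref{U(g)l2} yields $u_1\equiv 0\mod(U(\liek)\liey)$, completing the proof. The plan for showing $u_0\equiv 0$ is to combine the system obtained from the $\dot X_{-1}^{\,k}$-iteration with a PBW analysis adapted to $\liey$: using a basis ordering that puts $\{X_2,S_{23},S_{24}\}$ last, a vector $v\in U(\liek)$ is zero modulo $U(\liek)\liey$ exactly when its PBW-expansion has no $\liey$-factors on the right, and the $\sl$-theoretic identities from Proposition \ref{Techo} applied to the $\lambda+1$ congruences force the $\liey$-free PBW part of $u_0$ to vanish.

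The main obstacle is the asymmetry between $X_1$ and $X_{-1}$ relative to $\liey$: while $\dot X_{-1}$ preserves $U(\liek)\liey$, the derivation $\dot X_1$ does \emph{not}, because $[X_1,X_2]=E\notin\liey$. Hence one cannot simply transport the hypothesis $\dot X_1(u_j)=0$ to the quotient; any bracket with $X_1$ produces extra terms of the form $(\text{stuff})\cdot E$ which must be tracked. This is precisely the technical difficulty alluded to in the statement and, as in \cite{BCT}, the workaround is to use $\dot X_{-1}$ as the operator to iterate while leveraging $\dot X_1(u_j)=0$ only indirectly, through the fact that highest weight vectors for $\{H_1,X_1,X_{-1}\}$ of weight $\lambda$ generate irreducible $\sl$-modules of dimension $\lambda+1$ under $\dot X_{-1}$, which provides enough redundancy among the $\lambda+1$ congruences to isolate $u_0$ and $u_1$.
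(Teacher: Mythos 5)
Your proposal diverges from the paper's argument in a way that leaves a genuine gap, and it is not clear that your route can be made to work.

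The paper's proof applies the derivation $\dot X_1$, not $\dot X_{-1}$. The whole point of the hypothesis $\dot X_1(u_0)=\dot X_1(u_1)=0$ is that applying $\dot X_1$ once to the relation kills the $u_0$-term outright and converts $u_1E$ into $u_1X_4$ (since $\dot X_1(E)=X_4$), thereby \emph{separating} $u_0$ from $u_1$. More precisely, the paper first uses Lemma \ref{U(g)l1} to write $u_0+u_1E=aX_2+bS_{23}+cS_{24}$ with $a\in\calU_1$, $b\in\calU_2$, $c\in U(\liek)$ for suitable PBW-subspaces $\calU_1,\calU_2$ preserved by $\dot X_1$, and then iterates $\dot X_1$. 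Because $\dot X_1(X_2)=E$ escapes $\liey$ while $\dot X_1(S_{23})=\dot X_1(S_{24})=0$, the iterated equations form a triangular ladder in $X_2$, $E$, $X_4$ from which a decreasing induction yields $a=0$; one then gets $u_1X_4\in U(\liek)\tilde\liey$ with $\tilde\liey=\langle S_{23},S_{24}\rangle$ and finishes with Lemma \ref{U(g)l2}.

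Your proposal iterates $\dot X_{-1}$ instead. The structural facts you cite are correct: $[X_{-1},\liey]=0$, $[E,\liey]=0$, $\dot X_{-1}^2(E)=0$, and $[X_{-1},E]=2X_2\in\liey$. But precisely because $\dot X_{-1}$ commutes (in the relevant sense) with the passage to $U(\liek)/U(\liek)\liey$ and sends $E$ into $\liey$, iterating it never breaks the coupling between $u_0$ and $u_1$: every congruence you produce has the form $\dot X_{-1}^{\,k}(u_0)+\dot X_{-1}^{\,k}(u_1)E\equiv 0$, which is again a relation of exactly the original shape. Truncating at $k=\lambda$ where $\dot X_{-1}^{\,\lambda+1}(u_j)=0$ gives no extra information; the $(\lambda+1)$-st relation is simply $0\equiv 0$. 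In other words, the ``redundancy'' you invoke among the $\lambda+1$ congruences is illusory: the operator you are iterating preserves the ideal and fixes the defect $E\bmod\liey$, so it cannot isolate $u_0$. This is the opposite of what the hypothesis was given for.

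Two further points. First, the reference to Proposition \ref{Techo} is misplaced: that proposition concerns the vectors $X_\delta^{k-j}E^{\ell+j}(v)$ inside an irreducible $K$-module $V_{k,\ell}$ with $v\in V_{k,\ell}^M$, and it does not apply to an arbitrary $u_0\in U(\liek)$ annihilated by $\dot X_1$; nothing in your argument supplies the $M$-invariance or irreducibility needed to use it. Second, your closing application of Lemma \ref{U(g)l2} with $X=E$ is correct once one already knows $u_0\equiv 0\bmod U(\liek)\liey$, but you have not provided a valid mechanism to establish that prerequisite. The missing idea is exactly the one the paper uses: apply $\dot X_1$ (which kills $u_0,u_1$ by hypothesis but moves $E$ to $X_4$, out of $\liey$) to the PBW-normalized expression $aX_2+bS_{23}+cS_{24}$, not $\dot X_{-1}$ (which leaves $u_0,u_1$ alone and keeps everything inside $\liey$).
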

\begin{proof}
Let $\lies$ be the subalgebra of $\liek$ defined in \eqref{s}. If
$\{S_1,\dots,S_t \}$ is an ordered basis of $\lies$, the
following is an ordered basis for $\liek$
\begin{equation}\label{base6} \{S_1,
\dots, S_t, T_{23}, T_{24}, X_{\delta}, X_{\psi_2}, X_{\delta_1},
X_{\psi_1}, X_{\delta_2}, X_4, X_3, X_2, S_{23}, S_{24}\},
\end{equation}
we refer the reader to \eqref{pd1}, \eqref{pd2}, \eqref{ST} and \eqref{psi}
for the notation.

\smallskip

Let $\calU_1$ (respectively $\calU_2$) be
the subspace of $U(\liek)$ spanned by those monomials that, when
written in the Poincar\'e-Birkhoff-Witt bases of $U(\liek)$
associated to \eqref{base6}, end with powers of $X_2$ (respectively
$S_{23}$) or before. Using that $\dot X_1(\lies)\subset \lies$ and
taking a close look at the action of $\dot X_1$ on the other
elements of the basis \eqref{base6} it follows that $\dot
X_1(\calU_1)\subset \calU_1$ and $\dot X_1(\calU_2)\subset \calU_2$.

\smallskip

Since $u_0+u_1E \in U(\liek)\liey$ in view of Lemma \ref{U(g)l1} we
can write
\begin{equation}\label{X0}
u_0+u_1E=a X_2+b S_{23}+c S_{24},
\end{equation}
with $a\in\calU_1$, $b\in\calU_2$ and  $c\in U(\liek)$.
Then applying $\dot X_1$ we obtain that,
\begin{equation}\label{X1}
u_1X_4=\dot X_1(a)X_2+aE+\dot X_1(b)S_{23}+\dot X_1(c)S_{24},
\end{equation}
and for every $k\ge2$ we get,
\begin{equation}\label{X11}
0=\dot X_1^k(a)X_2+k\dot X_1^{k-1}(a)E+\textstyle\binom{k}2 \dot
X_1^{k-2}(a)X_4+\dot X_1^k(b)S_{23}+\dot X_1^k(c)S_{24}.
\end{equation}

Now set $\widetilde \liey=\langle\{S_{23}, S_{24}\}\rangle$. If $n$
is sufficiently large so that $\dot X_1^n(a)=0$, using equation
\eqref{X11} and decreasing induction on $j$ it follows
that $\dot X_1^j(a)=0$ for every $0\le j\le n$. In particular,
$a=0$. Hence, from \eqref{X0} and \eqref{X1} we obtain that
$u_1X_4 \in U(\liek)\tilde\liey$ and $u_0+u_1E \in U(\liek)\tilde\liey$.
Now, using Lemma \ref{U(g)l2} and the fact that $\dot E(\liey)=0$ it follows
that $u_0\equiv u_1 \equiv0 \mod(U(\liek)\tilde\liey)$, therefore
$u_0\equiv u_1 \equiv0 \mod(U(\liek)\liey)$ as we wanted to prove.\qed
\end{proof}

\medskip

Next proposition will be used in Theorem \ref{sistLR} of
Section \ref{mainproof}.
\begin{prop}\label{parimpar} Let $\{\eta_j:j\in\NN_0\}$ be a sequence in
$U(\liek)$ such that $\eta_j\ne0$ for a finite number of $j'$s,
$\dot X_1(\eta_j)=0$ for every $j\in\NN_0$ and
$\sum_{j\ge0}\eta_jE^j\equiv0\mod(U(\liek)\liey)$. Then
$$\sum_{i\ge0}\eta_{2i}E^{2i}\equiv0\quad\textstyle{and}\quad\sum_{i\ge0}
\eta_{2i+1}E^{2i+1}\equiv0,$$
where the congruence is\!\!$\mod(U(\liek)\liey)$.
\end{prop}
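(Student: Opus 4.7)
The plan is to deduce Proposition \ref{parimpar} from Proposition \ref{u+vE} by replacing the even powers $E^{2i}$ with a ``corrected'' element $F^i$ that lies both in the kernel of $\dot X_1$ and in the same class modulo $U(\liek)\liey$. The direct application of Proposition \ref{u+vE} to $u_0=\sum_i\eta_{2i}E^{2i}$ and $u_1=\sum_i\eta_{2i+1}E^{2i}$ fails because $\dot X_1(E^{2i})=2i\,X_4E^{2i-1}\ne0$; the correction must absorb this defect while lying in $U(\liek)\liey$.

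The correct choice is $F:=E^2-2X_4X_2\in U(\liek)$. Using the relations $[X_1,X_2]=E$ and $[X_1,E]=X_4$ from Section \ref{F4}, together with $[X_1,X_4]=0$, $[X_4,E]=0$, $[X_4,X_2]=0$ and $[E,X_2]=0$ (each holding because the corresponding sums of roots lie outside the $C_2$-subsystem of $\widetilde\liek$), one computes $\dot X_1(F)=2X_4E-2X_4E=0$. Moreover $F-E^2=-2X_4X_2\in U(\liek)\liey$ since $X_2\in\liey$, and $F$ commutes with each of $X_2$, $S_{23}$, $S_{24}$, so right multiplication by $F$ preserves the left ideal $U(\liek)\liey$. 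A straightforward induction on $i$ then gives $F^i\equiv E^{2i}\mod(U(\liek)\liey)$ for every $i\ge0$.

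Setting $u_0:=\sum_{i\ge0}\eta_{2i}F^i$ and $u_1:=\sum_{i\ge0}\eta_{2i+1}F^i$, the hypotheses $\dot X_1(\eta_j)=0$ together with $\dot X_1(F)=0$ give, via Leibniz, $\dot X_1(u_0)=\dot X_1(u_1)=0$. Using that $E$ commutes with $\liey$ (so right multiplication by $E$ also preserves $U(\liek)\liey$), one obtains $u_0+u_1E\equiv\sum_{j\ge0}\eta_jE^j\equiv0\mod(U(\liek)\liey)$. Proposition \ref{u+vE} then yields $u_0\equiv u_1\equiv0\mod(U(\liek)\liey)$, whence $\sum_{i\ge0}\eta_{2i}E^{2i}\equiv u_0\equiv0$ and $\sum_{i\ge0}\eta_{2i+1}E^{2i+1}\equiv u_1E\equiv0$ modulo $U(\liek)\liey$. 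The main difficulty is identifying the correction term $-2X_4X_2$; once $F$ is available, the proposition reduces directly to Proposition \ref{u+vE}.
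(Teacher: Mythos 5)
Your proof is correct and follows essentially the same route as the paper: the paper sets $\Delta=2X_4X_2-E^2$ (which is $-F$ in your notation), observes that $(-1)^j\Delta^j\equiv E^{2j}\mod(U(\liek)\liey)$ and $\dot X_1(\Delta)=0$, and then defers the remaining steps to Proposition 4.11 of \cite{BCT}. You have simply written out that deferred reduction explicitly, forming $u_0=\sum_i\eta_{2i}F^i$ and $u_1=\sum_i\eta_{2i+1}F^i$ and invoking Proposition \ref{u+vE}, which is exactly the intended argument.
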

\begin{proof} Let $\Delta=2X_4X_2-E^2$. Since $X_2$, $X_4$ and $E$ commute with
each other it follows that $(-1)^j\Delta^j\equiv E^{2j}$ mod$(U(\liek)\liey)$
for every $j\in \NN_0$. Also observe that $\dot X_1(\Delta)=0$. From now on
the proof follows in the same way as that of Proposition 4.11 of \cite {BCT},
simply changing the congruence mod$(U(\liek)X_2)$ for a congruence
mod$(U(\liek)\liey)$.\qed
\end{proof}

\section{An estimate on the Kostant degree}\label{Kostant}

In this section we introduce the \textit{degree property} and show
that every $b\in P(U(\lieg)^K)$ has the degree property. This result
is used in Proposition \ref{Btilde1} bellow. We also show that to
prove Theorem \ref{main'}, and therefore our main result Theorem
\ref{main}, it is enough to prove Theorem \ref{Btilde2} bellow.

\begin{defn}\label{degprop}
Let $b=b_m\otimes Z^m+\cdots+b_0\in U(\liek)^M\otimes U(\liea)$ with
$b_m \neq 0.$ We say that $b$ has the \textit{degree property} if
$d(b_{m-j})\le m+2j$ for every $0\le j\le m$.
\end{defn}

We begin by recalling a few facts about $\lies$-triples in $\lieg$.
Recall that an $\lies$-triple is a set of three linearly independent
elements $\{x, e,f\}$ in $\lieg$ such that $[x,e]=2e$, $[x,f]=-2f$
and $[e,f]=x$. The $\lies$-triple $\{x,e,f\}$  is called \textit{normal}
if $e, f \in\liep$ and $x\in\liek$. A normal $\lies$-triple $\{x,e,f\}$
is called \textit{principal} if $e$ (and hence $f$) is a regular element in
$\liep$. Theorem 3 of \cite{KR} guarantee that principal normal
$\lies$-triples exist, and in Theorem 6 of the same paper it is
proved that any two principal normal $\lies$-triples are
$K_\theta$-conjugate, where $K_\theta$ is the subgroup of all
elements in $G$ that commute with $\theta$.

\smallskip

Fix a principal normal $\lies$-triple $\{x,e,f\}$ in $\lieg$ and set
$z=x/2$. In Proposition 1 of \cite{Ti1} it is proved that the map
$ad(z) : \liep \rightarrow \liep$ is diagonalizable with eigenvalues
$1$, $-1$ and possibly $0$. Since in our case $\lieg \simeq \lief_4 $, the
eigenvalues of $ad(z)$ in $\lieg$ are $-2$, $-1$, $0$, $1$ and $2$
(see the proof of Proposition 1 of \cite{Ti1}), then the next result
follows.

\begin{lem}\label{z1} The map  $ad(z) : \liek \rightarrow \liek$ is
diagonalizable and its highest eigenvalue is 2.
\end{lem}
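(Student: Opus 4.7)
The plan is to exploit the representation theory provided by the $\lies$-triple $\{x,e,f\}$ together with the $\theta$-stability of the eigenspaces of $ad(z)$. First I would observe that since $\{x,e,f\}$ is normal, $x\in\liek$ and hence $z=x/2\in\liek$, so that $ad(z)$ preserves the decomposition $\lieg=\liek\oplus\liep$. Viewing $\lieg$ as an $\liesl_2$-module via the triple $\{x,e,f\}$, the element $ad(x)$ is diagonalizable with integer eigenvalues; consequently $ad(z)$ is diagonalizable on $\lieg$, and by restriction diagonalizable on the invariant subspace $\liek$. This handles the first assertion.

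For the second assertion I would combine two inputs cited in the excerpt: the full list of eigenvalues of $ad(z)$ on $\lieg$ is $\{-2,-1,0,1,2\}$, while the eigenvalues of $ad(z)$ on $\liep$ lie in $\{-1,0,1\}$ by Proposition~1 of \cite{Ti1}. Since $ad(z)$ preserves both $\liek$ and $\liep$, any eigenvalue of $ad(z)$ on $\liek$ is an eigenvalue on $\lieg$, so the highest eigenvalue on $\liek$ is at most $2$. Conversely, denoting by $\lieg_2$ the $2$-eigenspace of $ad(z)$ in $\lieg$, one has $\lieg_2=(\lieg_2\cap\liek)\oplus(\lieg_2\cap\liep)$; but $\lieg_2\cap\liep=0$ by the restriction on the eigenvalues on $\liep$. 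Since $2$ does occur as an eigenvalue of $ad(z)$ on $\lieg$, we conclude that $\lieg_2=\lieg_2\cap\liek$ is a nonzero subspace of $\liek$, which exhibits $2$ as the highest eigenvalue of $ad(z)$ on $\liek$.

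There is essentially no obstacle in this argument; everything reduces to recalling (i) that semisimple elements in $\liesl_2$-triples act diagonalizably on any rational representation, and (ii) that eigenspaces of $ad(z)$ are automatically $\theta$-stable because $z\in\liek$ satisfies $\theta(z)=z$. The only real content is the comparison of the spectra on $\lieg$ and on $\liep$ provided by Proposition~1 of \cite{Ti1}, which is quoted explicitly in the paragraph preceding the lemma.
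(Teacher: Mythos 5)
Your proof is correct and follows the same route the paper takes, merely filling in the steps the paper leaves implicit when it says "then the next result follows" after quoting Proposition 1 of \cite{Ti1} (eigenvalues of $ad(z)$ on $\liep$ lie in $\{-1,0,1\}$) and the fact that for $F_4$ the eigenvalues of $ad(z)$ on $\lieg$ are $\{-2,-1,0,1,2\}$. Both the diagonalizability (from the $\liesl_2$-module structure on $\lieg$ and the $\theta$-stability coming from $z\in\liek$) and the spectral comparison between $\lieg$, $\liek$ and $\liep$ are exactly what the paper relies on.
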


In Corollary 9 of \cite{Ti1} it is shown that if $\lieg_o$ is a
semisimple Lie algebra over $\RR$, different from $\liesl(2,\RR)$,
and $V_\gamma$ is an irreducible $K$-module of type $\gamma \in
\Gamma$ then $d(\gamma)$ is the highest eigenvalue of $z$ in
$V_\gamma$. From this result the following lemma follows.

\begin{lem}\label{z2} Let $V$ be a finite dimensional $K$-module and let $n$ be the
highest eigenvalue of $z$ in $V$. If $u \in V^M$ and $u \ne 0$, then
$d(u)\le n$.
\end{lem}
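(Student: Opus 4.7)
The plan is to use the $K$-isotypic decomposition of $V$ to reduce $d(u)\le n$ to the irreducible case, where it is an immediate consequence of Corollary 9 of \cite{Ti1}, recalled in the paragraph preceding Lemma \ref{z1}. The key observation is that $z=x/2$ lies in $\liek$ (since $x\in\liek$ by definition of a normal $\lies$-triple), so $z$ acts on every $K$-submodule of $V$ and, in particular, preserves the $K$-isotypic decomposition.

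First I would write $V=\bigoplus_{\tau\in\hat K}V_\tau$ and decompose $u=\sum_\tau \textbf{u}_\tau$ with $\textbf{u}_\tau\in V_\tau$. Because $u$ is $M$-invariant and each $V_\tau$ is a $K$-submodule of $V$ (hence $M$-stable), we have $\textbf{u}_\tau\in V_\tau^M$, so only classes $\tau\in\Gamma$ can contribute a nonzero component. By the definition \eqref{Kostdeg} of the Kostant degree, $d(u)=\max\{d(\tau):\textbf{u}_\tau\ne 0\}$, so it is enough to show $d(\tau)\le n$ for every such $\tau$.

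Next I would fix $\tau\in\Gamma$ with $\textbf{u}_\tau\ne 0$ and pick any irreducible $K$-submodule $W\subset V_\tau$ of class $\tau$. Since $z\in\liek$, the subspace $W$ is $z$-stable, and Corollary 9 of \cite{Ti1} asserts that $d(\tau)$ equals the highest eigenvalue of $z$ on $W$. Since every eigenvalue of $z|_W$ is an eigenvalue of $z|_V$, this yields $d(\tau)\le n$, and taking the maximum over all contributing $\tau$ gives $d(u)\le n$.

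There is essentially no obstacle here: the argument is a one-step reduction to the irreducible case followed by a direct appeal to Corollary 9 of \cite{Ti1}. The only hypothesis one must verify is that $\lieg_o\not\simeq\liesl(2,\RR)$, which is required by that corollary and is satisfied in our setting since $\lieg_o$ is locally isomorphic to F$_4$.
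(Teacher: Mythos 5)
Your argument is correct and fills in precisely the reduction the paper leaves implicit: the paper states only that Lemma \ref{z2} ``follows'' from Corollary 9 of \cite{Ti1}, and your proof supplies the intended chain of reasoning (decompose $u$ into $K$-isotypic, $M$-invariant pieces, note $z\in\liek$ preserves each isotypic block, and apply Corollary 9 on each irreducible constituent). The approach matches the paper's, and the side remark about $\lieg_o\not\simeq\liesl(2,\RR)$ is an appropriate check of the corollary's hypothesis.
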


As an application of Lemma \ref{z1} and Lemma \ref{z2} we obtain the
following result that will be useful in what follows.

\begin{lem}\label{z3} If $u \in U_m(\liek)^M$ and $u \ne 0$, then
$d(u)\le 2m$.
\end{lem}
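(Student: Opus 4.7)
The plan is to apply Lemma \ref{z2} to the finite dimensional $K$-module $V = U_m(\liek)$, after first determining an upper bound for the largest eigenvalue of $\dot z = ad(z)$ on $U_m(\liek)$.

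First I would recall that $U_m(\liek)$ is a $K$-stable (in fact $\liek$-stable under the adjoint action) finite dimensional subspace of $U(\liek)$, and that the derivation $\dot z$ of $U(\liek)$ induced by $ad(z)$ coincides with the action of $z$ coming from the $K$-module structure. By Lemma \ref{z1}, $\dot z$ is diagonalizable on $\liek$ with eigenvalues bounded above by $2$; choose an eigenbasis $\{X_1,\dots,X_N\}$ of $\liek$ for $\dot z$ with eigenvalues $\lambda_1,\dots,\lambda_N$, all satisfying $\lambda_i\le 2$.

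Next I would use the Poincar\'e--Birkhoff--Witt theorem: $U_m(\liek)$ is spanned by ordered monomials $X_{i_1}\cdots X_{i_k}$ with $k\le m$. Since $\dot z$ is a derivation, each such monomial is an eigenvector of $\dot z$ with eigenvalue $\lambda_{i_1}+\cdots+\lambda_{i_k}\le 2k\le 2m$. Therefore $\dot z$ is diagonalizable on $U_m(\liek)$ and its highest eigenvalue $n$ satisfies $n\le 2m$.

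Finally, applying Lemma \ref{z2} to $V=U_m(\liek)$ and any $0\ne u\in U_m(\liek)^M$, we get $d(u)\le n\le 2m$, which is the desired conclusion. There is no real obstacle here; the only point that needs care is the identification of the two actions of $z$ on $U(\liek)$ (as an element of the enveloping algebra acting by $ad$ and as an element acting on the $K$-module $U(\liek)$), but this is immediate from the definitions.
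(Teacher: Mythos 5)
Your proof is correct and follows the same route the paper indicates: combine Lemma \ref{z1} (the eigenvalues of $ad(z)$ on $\liek$ are bounded by $2$) with Lemma \ref{z2} (the Kostant degree of an $M$-invariant is bounded by the top eigenvalue of $z$ on the ambient finite dimensional $K$-module), the link being supplied by the derivation property of $\dot z$ together with the Poincar\'e--Birkhoff--Witt spanning set for $U_m(\liek)$. The paper states Lemma \ref{z3} as an immediate application of Lemmas \ref{z1} and \ref{z2} and omits the details; what you wrote is exactly the intended filling-in.
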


Recall that $P:U(\lieg)\longrightarrow U(\liek)\otimes U(\liea)$ is
the projection on the first summand of the direct sum
$U(\lieg)=\bigl(U(\liek)\otimes U(\liea)\bigr)\oplus U(\lieg)\lien$,
associated to an Iwasawa decomposition
$\lieg=\liek\oplus\liea\oplus\lien$ adapted to $\liek$. The proof of
the following result follows easily by choosing an appropriate
Poincar\'e-Birkhoff-Witt bases of $U(\lieg)$.

\begin{lem}\label{P(U_n)}  $P( U_m(\lieg)) = \displaystyle{\sum_{\substack{0\le \ell\le m}}}
U_{m-\ell}(\liek)\otimes Z^\ell$ for every $m\ge 0$.
\end{lem}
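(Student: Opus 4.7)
\medskip

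\noindent\textbf{Proof plan for Lemma \ref{P(U_n)}.} The plan is to work with an ordered Poincar\'e--Birkhoff--Witt basis of $U(\lieg)$ that respects the Iwasawa decomposition $\lieg=\liek\oplus\liea\oplus\lien$. Specifically, I would pick an ordered basis $\{K_1,\dots,K_r\}$ of $\liek$, take $Z$ as the basis of the one--dimensional subalgebra $\liea$, and pick an ordered basis $\{N_1,\dots,N_s\}$ of $\lien$, and then list these vectors in the order (first $\liek$, then $Z$, then $\lien$). By PBW this yields a vector space basis of $U(\lieg)$ consisting of monomials of the form $K^I Z^j N^J$, with $I\in\NN_0^r$ and $J\in\NN_0^s$. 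Under this basis the canonical filtration satisfies $K^I Z^j N^J\in U_m(\lieg)$ if and only if $|I|+j+|J|\le m$.

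\smallskip

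The inclusion $P(U_m(\lieg))\subseteq\sum_{0\le\ell\le m}U_{m-\ell}(\liek)\otimes Z^\ell$ then follows from the explicit description of $P$ on PBW monomials. Indeed, since $U(\liek)\otimes U(\liea)$ is identified inside $U(\lieg)$ with the span of the monomials $K^I Z^j$, and $U(\lieg)\lien$ is the span of those PBW monomials with $|J|>0$, the projection $P$ sends $K^I Z^j N^J$ to $K^I Z^j$ if $|J|=0$ and to $0$ otherwise. Hence a generic element of $U_m(\lieg)$ is mapped by $P$ to a sum of terms $K^I\otimes Z^j$ with $|I|+j\le m$, and each such term lies in $U_{|I|}(\liek)\otimes Z^j\subseteq U_{m-j}(\liek)\otimes Z^j$.

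\smallskip

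For the reverse inclusion, given $\ell$ with $0\le\ell\le m$ and $k\in U_{m-\ell}(\liek)$, I would simply observe that the product $k\, Z^\ell$, viewed inside $U(\lieg)$, belongs to $U_m(\lieg)$ because degrees add across the PBW basis (so $\deg(k\, Z^\ell)\le(m-\ell)+\ell=m$), and it already lies in the summand $U(\liek)\otimes U(\liea)$ of the Iwasawa decomposition of $U(\lieg)$. Therefore $P(k\, Z^\ell)=k\otimes Z^\ell$, which shows that every element of $U_{m-\ell}(\liek)\otimes Z^\ell$ lies in the image $P(U_m(\lieg))$.

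\smallskip

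There is no substantive obstacle here; the only point that has to be handled with a little care is the identification of $U(\liek)\otimes U(\liea)$ with its image under the multiplication map inside $U(\lieg)$, and the compatibility of the PBW basis with this identification, but this is immediate from the choice of ordering (elements of $\liek$ and $\liea$ before those of $\lien$).
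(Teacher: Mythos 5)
Your proof is correct and takes exactly the approach the paper indicates: the paper states only that the lemma ``follows easily by choosing an appropriate Poincar\'e--Birkhoff--Witt basis of $U(\lieg)$,'' which is precisely the basis (ordered $\liek$, then $\liea$, then $\lien$) you construct. Your write-up simply supplies the routine details the authors chose to omit.
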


Let $\sigma: S(\lieg)\longrightarrow U(\lieg)$ be the symmetrization
mapping. It is known that $\sigma$ is a $K$-linear isomorphism. Let
$\varphi: U(\liek)\otimes S(\liep)\longrightarrow U(\lieg)$ be the
$K$-linear isomorphism defined by $\varphi(u \otimes
p)=u\,\sigma(p)$. Then we have,

\begin{equation*}
U(\lieg)^K = \displaystyle \sum_{m\ge 0}
\big(U(\liek)\,\sigma\!\left(S_m(\liep)\right)\big)^K.
\end{equation*}

\begin{thm}\label{degP(u)}Let $u\in \big(U(\liek)\sigma\!\left(S_m(\liep)\right)
\big)^K$ where $m$ is the smallest possible. Then $P(u)= b_m\otimes
Z^m+\cdots+b_0 \in U(\liek)^M\otimes U(\liea)$, $b_m\neq 0$ and
$d(b_{m-j})\le m+2j$ for $0\le j\le m$.
\end{thm}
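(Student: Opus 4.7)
I would begin by writing $u=\sum_i v_i\,\sigma(p_i)$ with $v_i\in U(\liek)$ and $p_i\in S_m(\liep)$ homogeneous of degree $m$, using the $K$-linear isomorphism $\varphi\colon U(\liek)\otimes S(\liep)\to U(\lieg)$. Since $v\cdot U(\lieg)\lien\subset U(\lieg)\lien$ for every $v\in U(\liek)$, one has $P(vx)=v\,P(x)$, hence $P(u)=\sum_i v_i\,P(\sigma(p_i))$. As $\sigma(p_i)\in U_m(\lieg)$, Lemma~\ref{P(U_n)} yields
\begin{equation*}
P(\sigma(p_i))\in\bigoplus_{\ell\le m}U_{m-\ell}(\liek)\otimes Z^\ell,
\end{equation*}
so $P(u)$ has $Z$-degree at most $m$ and produces the asserted expansion $P(u)=b_m\otimes Z^m+\cdots+b_0$, with the $b_\ell$ automatically in $U(\liek)^M$ because $u$ is $K$-invariant and $P$ is $M$-equivariant.

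\textbf{Nonvanishing of $b_m$.}
A direct Iwasawa computation shows that for $p=Y_1\cdots Y_m\in S_m(\liep)$, the $Z^m$-coefficient of $P(\sigma(p))$ equals $\alpha(Y_1)\cdots\alpha(Y_m)$, where $\alpha\colon\liep\to\liea$ is the $\liea$-component of the Iwasawa projection. Extending to the algebra morphism $S(\alpha)\colon S(\liep)\to S(\liea)\cong\CC[Z]$, this identifies $b_m\otimes Z^m$ with $(\mathrm{id}\otimes S(\alpha))(\tilde u)$ for $\tilde u=\sum_i v_i\otimes p_i\in(U(\liek)\otimes S_m(\liep))^K$. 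Viewing $\tilde u$ by duality as a $U(\liek)$-valued polynomial of degree $m$ on $\liep^*$, the hypothesis $b_m=0$ would force $\tilde u$ to vanish on $\alpha^*(\liea^*)\subset\liep^*$. The $K$-invariance of $\tilde u$ then propagates this vanishing to every translate $\mathrm{Ad}^*(k)\alpha^*(\liea^*)$, hence to the entire $K$-orbit of a regular element of $\liep^*$; since the real rank is one this orbit is Zariski-dense in $\liep^*$, forcing $\tilde u=0$ and $u=0$, contrary to the minimality of $m$. Therefore $b_m\ne 0$.

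\textbf{Kostant-degree bound and main obstacle.}
The estimate $d(b_{m-j})\le m+2j$ reduces, via Lemmas~\ref{P(U_n)} and~\ref{z3}, to the existence of a representation $u=\sum v_i\,\sigma(p_i)$ with $\deg(v_i)\le m/2$: then $u\in U_{\lceil 3m/2\rceil}(\lieg)$, so $b_{m-j}\in U_{\lceil m/2\rceil+j}(\liek)$, and Lemma~\ref{z3} gives $d(b_{m-j})\le 2(\lceil m/2\rceil+j)$; the parity constraint from Proposition~\ref{hw3}(iii) (which forces the relevant $\gamma\in\Gamma_1$ to have $k$ even, hence $d(\gamma)$ even) then sharpens this to $m+2j$. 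To produce such a representation I would decompose $\tilde u$ into $K$-isotypic components, exploiting that $(V_\gamma\otimes V_{\gamma^*})^K$ is one-dimensional in order to place each $v_i$ in a copy of $V_\gamma$ in $U(\liek)$ of minimum PBW-degree. By the Kostant--Rallis decomposition $S(\liep)=H(\liep)\otimes S(\liep)^K$, and the fact that $S(\liep)^K=\CC[\phi]$ with $\deg(\phi)=2$ in real rank one, every $\gamma\in\Gamma$ occurring in $S_m(\liep)$ satisfies $d(\gamma)\le m$, so one needs the copy of $V_\gamma$ inside $U(\liek)$ to exist at PBW-degree $d(\gamma)/2\le m/2$. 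The main obstacle is precisely this last step: justifying that each $\gamma\in\Gamma_1$ appearing in $S_m(\liep)$ really occurs in $U(\liek)$ at PBW-degree $d(\gamma)/2$, the lower bound predicted by Lemma~\ref{z3}, so that $\deg(v_i)\le m/2$ is achievable. This rests on an F$_4$-specific analysis via Proposition~\ref{hw3}, with the minimality of $m$ used to exclude representations that could have been absorbed into an $\sigma(S(\liep)^K)$-factor at a strictly smaller $\liep$-degree.
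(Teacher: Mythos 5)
Your opening steps (lifting $u$ to $\widetilde u\in\big(U(\liek)\otimes S_m(\liep)\big)^K$, applying Lemma~\ref{P(U_n)} to each $\sigma(p_i)$, and concluding that the coefficients lie in $U(\liek)^M$) match the paper. The nonvanishing of $b_m$ via the morphism $S(\alpha)\colon S(\liep)\to S(\liea)$ and a Zariski-density argument is a reasonable self-contained argument; the paper does not spell this out (it is essentially folded into the reference to Corollary~7.3 of \cite{KT} at Theorem~\ref{degP(u)'}).

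The Kostant-degree bound is where the proposal goes wrong. You try to reduce $d(b_{m-j})\le m+2j$ to producing a representation with $\deg(v_i)\le m/2$, but the $u_{\ell,a}$ appearing in $\widetilde u=\sum u_{\ell,a}\otimes Z^\ell X^a$ are \emph{determined} by $u$ once the basis of $S_m(\liep)$ is fixed; there is no freedom to place them in minimal-degree copies of $V_{\gamma^*}$ inside $U(\liek)$, and the minimality of $m$ says nothing about the PBW-degree of the $U(\liek)$-side of $\widetilde u$. Moreover, even granting $\deg(v_i)\le m/2$, your bound $d(b_{m-j})\le 2(\lceil m/2\rceil+j)$ is only $m+2j$ when $m$ is even; for $m$ odd it gives $m+1+2j$, and the parity of $d(\cdot)$ (always even) does not improve that to $m+2j$, which is odd. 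The paper avoids all of this by not bounding any PBW degree on the $U(\liek)$-side. It writes $b_j=\sum_{j\le\ell+|a|\le m}u_{\ell,a}\,v_{\ell,a,j}$ with $v_{\ell,a,j}\in U_{\ell+|a|-j}(\liek)\subset U_{m-j}(\liek)$ coming from $P(\sigma(Z^\ell X^a))$, and then bounds $z$-eigenvalues factor by factor: the $u_{\ell,a}$ lie in $V=\sum_{\tau\in J}U(\liek)_{\tau^*}^M$, where the $z$-eigenvalues are at most $m$ because the $\tau$ occur in $S_m(\liep)$ and $d(\tau^*)=d(\tau)\le m$; the $v_{\ell,a,j}$ lie in $U_{m-j}(\liek)$, where Lemma~\ref{z1} bounds $z$-eigenvalues by $2(m-j)$. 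By Lemma~\ref{z2}, $d(b_j)\le m+2(m-j)$, i.e.\ $d(b_{m-j})\le m+2j$. This is the key idea you are missing: the $m$ in $m+2j$ comes from the $K$-types of $S_m(\liep)$, not from a PBW-degree bound on the $U(\liek)$-factor.
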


\begin{proof}Let $\widetilde u \in \big(U(\liek)\otimes S_m(\liep)\big)^K$ be
such that $\varphi(\widetilde u)=u$. Write $S_m(\liep)=\textstyle{\sum W_{\tau}}$
where the sum runs over a finite set $J \subset \Gamma$. Then by Schur's Lemma
we have,
\begin{equation}\label{schur}
\big(U(\liek)\otimes S_m(\liep)\big)^K = \displaystyle \sum_{\tau \in J}
\big(U(\liek)_{\tau^*}\otimes W_\tau \big)^K,
\end{equation}
\noindent where $\tau^*$ is the contragredient representation of $\tau$, and
$U(\liek)_{\tau^*}$ denotes the $\tau^*$-isotypic component of $U(\liek)$.

Let $\lieq$ be a subspace of $\liep$ such that $\liep=\liea \oplus
\lieq$ and let $\{X_1, \dots, X_r\}$ be an ordered bases of
$\lieq$\,. If $a=(a_1,\dots,a_r)$ with $a_i \in \NN_0$, and
$X^a=X_1^{a_1}\cdots X_r^{a_r}$ in $S(\liep)$, it follows that
$\{Z^{\ell} X^a: 0\le \ell+|a| \le m \}$ is a bases of $S_m(\liep)$,
where $|a|= a_1+\cdots+a_r$. Then, in view of \eqref{schur}, we can
write
\begin{equation*}
\tilde u = \displaystyle \sum_{0\le \ell+|a| \le m} u_{\ell,
a}\otimes Z^{\ell} X^a,
\end{equation*}
\noindent where $u_{\ell, a}$ belongs to the $K$-module $V=
\textstyle{\sum_{\tau \in J} U(\liek)_{\tau^*}^M}$ for
every pair $(\ell, a)$. Then,
\begin{equation}\label{P(u)}
P(u) = \displaystyle \sum_{0\le \ell+|a| \le m} P\big(u_{\ell,
a}\,\sigma( Z^{\ell} X^a)\big)= \displaystyle \sum_{0\le \ell+|a|
\le m} u_{\ell, a}\,P\big(\sigma( Z^{\ell} X^a)\big).
\end{equation}

Now, since $\sigma( Z^{\ell} X^a)\in U_{\ell+|a|}(\lieg)$, it
follows from Lemma \ref{P(U_n)} that
$$P(\sigma (Z^{\ell} X^a))= \displaystyle \sum_{0\le j \le \ell+|a|} v_{\ell, a, j}
\otimes Z^j,$$ \noindent with $v_{\ell, a, j}\in
U_{\ell+|a|-j}(\liek)$. Hence from \eqref{P(u)} we have,
$$P(u)= \displaystyle \sum_{0\le j \le m} \big(\sum_{j\le \ell+|a| \le m}u_{\ell,
a}\,v_{\ell, a, j}\big)\otimes Z^j. $$ Then from the uniqueness of
the coefficients $b_j$ it follows that,
\begin{equation}\label{bj'}
b_j = \displaystyle \sum_{j\le \ell+|a| \le m}u_{\ell, a}\,v_{\ell,
a, j} \quad\quad \text{for} \quad\quad 0\le j\le m,
\end{equation}
where $v_{\ell, a, j}\in U_{\ell+|a|-j}(\liek)\subset U_{m-j}(\liek)$
for every pair $(\ell,a)$. Hence from \eqref{bj'} we obtain that,
\begin{equation}\label{bj''}
b_j \in \langle V \cdot  U_{m-j}(\liek)\rangle^M \subset U(\liek)^M
\quad\quad \text{for} \quad\quad 0\le j\le m.
\end{equation}
Recall that $\langle S \rangle$ denotes the linear space spanned by
the set $S$. Observe that in this case
$\langle V \cdot  U_{m-j}(\liek)\rangle$ is a $K$-module.

Now, since the highest eigenvalue of $z$ in $\liep$ is $1$, it
follows that the highest eigenvalue of $z$ in $S_m(\liep)$ is $m$.
Then $d(\tau)\le m$ for every $\tau \in J$, and therefore
$d(\tau^*)\le m$ for every $\tau \in J$. This implies that the
highest eigenvalue of $z$ in $V$ is less or equal to $m$. On the
other hand, we know that the highest eigenvalue of $z$ in
$U_{m-j}(\liek)$ is less or equal to $2(m-j)$, hence the highest
eigenvalue of $z$ in $\langle V \cdot  U_{m-j}(\liek)\rangle$ is
less or equal to $m+2(m-j)$. Then, from Lemma \ref{z2} and
\eqref{bj''} it follows that $d(b_{j})\le m+2(m-j)$ for $0\le j\le
m$, and therefore $d(b_{m-j})\le m+2j$ for $0\le j\le m$, as we
wanted to prove.\qed
\end{proof}

\begin{thm}\label{degP(u)'}Let $b\in P(U(\lieg)^K)$ be such that $b= b_m\otimes
Z^m+\cdots+b_0 $ with $b_m\neq 0$, then $d(b_{m-j})\le m+2j$ for
every $0\le j\le m$.
\end{thm}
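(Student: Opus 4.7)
The plan is to reduce Theorem \ref{degP(u)'} to Theorem \ref{degP(u)} by decomposing $u\in U(\lieg)^K$ according to the $K$-stable direct sum decomposition coming from the $K$-linear isomorphism $\varphi: U(\liek)\otimes S(\liep)\to U(\lieg)$, and then tracking the degree of each coefficient through the sum.

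First I would note that the grading $S(\liep)=\bigoplus_{m\ge 0} S_m(\liep)$ transports, via $\varphi$, to a $K$-stable direct sum decomposition $U(\lieg)=\bigoplus_{m\ge 0} U(\liek)\sigma(S_m(\liep))$, and hence
\begin{equation*}
U(\lieg)^K=\bigoplus_{m\ge 0}\bigl(U(\liek)\sigma(S_m(\liep))\bigr)^K.
\end{equation*}
Given $b=P(u)$ with $u\in U(\lieg)^K$, write $u=\sum_{m\ge 0} u_m$ uniquely with $u_m\in \bigl(U(\liek)\sigma(S_m(\liep))\bigr)^K$. Since this is a direct sum decomposition, for each nonzero $u_m$ the index $m$ is automatically the smallest possible in the sense of Theorem \ref{degP(u)}, so that theorem applies and yields
\begin{equation*}
P(u_m)=b_m^{(m)}\otimes Z^m+\cdots+b_0^{(m)},\qquad b_m^{(m)}\ne 0,
\end{equation*}
with $d(b_\ell^{(m)})\le m+2(m-\ell)$ for $0\le \ell\le m$.

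Next, let $M$ be the largest index with $u_M\ne 0$. Then $P(u)=\sum_m P(u_m)$, and since each $P(u_m)$ has degree at most $m$ by Lemma \ref{P(U_n)}, the coefficient of $Z^M$ in $P(u)$ equals $b_M^{(M)}\ne 0$. This shows $P(u)$ has degree exactly $M$, so in the statement of the theorem we must have $m=M$. For each $0\le j\le M$, collecting the coefficient of $Z^{M-j}$ gives
\begin{equation*}
b_{M-j}=\sum_{k=M-j}^{M} b_{M-j}^{(k)}.
\end{equation*}

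Finally, applying the bound from Theorem \ref{degP(u)} with $\ell=M-j$ and index $k$ in place of $m$, we get
\begin{equation*}
d\bigl(b_{M-j}^{(k)}\bigr)\le k+2(k-(M-j))=3k-2M+2j,
\end{equation*}
which as a function of $k\in\{M-j,\dots,M\}$ is maximized at $k=M$, giving $M+2j$. Invoking part (a) of Proposition \ref{d(uv)} to bound the degree of a sum by the maximum yields $d(b_{M-j})\le M+2j$, which is exactly the degree property. The main (and essentially only) subtle point in this argument is confirming that the hypothesis ``$m$ is the smallest possible'' in Theorem \ref{degP(u)} is automatically met by each piece $u_m$; this is immediate from the fact that the decomposition $U(\lieg)^K=\bigoplus_m(U(\liek)\sigma(S_m(\liep)))^K$ is an \emph{internal direct sum}, so no nonzero $u_m$ can lie in a summand with smaller index.
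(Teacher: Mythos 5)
Your proof is correct, but it takes a genuinely different organizational route from the paper's. The paper's argument is a two-line reduction: it cites Corollary~7.3 of \cite{KT} to conclude that the (unique, by injectivity of $P$) preimage $u$ of $b$ lies in $\big(U(\liek)\sigma(S_m(\liep))\big)^K$ with $m$ the smallest possible, and then applies Theorem~\ref{degP(u)} directly to $u$. You instead decompose an arbitrary preimage $u=\sum_k u_k$ along the graded decomposition $U(\lieg)^K=\bigoplus_k\big(U(\liek)\sigma(S_k(\liep))\big)^K$, apply Theorem~\ref{degP(u)} to each summand, and then add up the bounds. What your route buys is that you avoid invoking the external Corollary~7.3 explicitly; what you lose is that the crucial nondegeneracy fact is now hidden: in order to identify $m$ with $M=\max\{k:u_k\ne0\}$, you need $b_M^{(M)}\ne0$, i.e.\ that $P$ applied to a nonzero element of the $M$-th graded piece produces a polynomial of degree exactly $M$. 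You read this off from the ``$b_m\neq 0$'' clause in the conclusion of Theorem~\ref{degP(u)}, but the paper's proof of that theorem only establishes the degree bounds; the assertion $b_m\neq 0$ is precisely the content secured by Corollary~7.3 of \cite{KT} (and is what ``$m$ smallest possible'' is encoding). So you are implicitly leaning on the same external result, just at a different point in the argument. The remaining arithmetic — $d(b_{m-j}^{(k)})\le 3k-2m+2j\le m+2j$ for $m-j\le k\le m$, together with subadditivity of the Kostant degree from Proposition~\ref{d(uv)}(a) — is correct and matches what the paper's single-summand reduction yields.
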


\begin{proof} Let $u \in U(\lieg)^K$ be such that $P(u)=b$. Since $b_m\neq 0$,
it follows from Corollary 7.3 of \cite{KT} that $u\in
\big(U(\liek)\sigma\left(S_m(\liep)\right) \big)^K$ and $m$ is the
smallest possible. Hence the result follows from Theorem
\ref{degP(u)}.\qed
\end{proof}

\medskip

Our next goal is to show that Theorem \ref{main'} follows from Theorem \ref{Btilde2}
bellow. In the next lemma we single out a particular
element $\omega \in B$. This element is a scalar
multiple of $P(\Omega)$, where $\Omega$ is the Casimir of $\lieg$.

\begin{lem}\label{casimir}
There exist $\omega = \omega_2\otimes Z^2+\omega_1\otimes
Z+\omega_0\in P(U(\lieg)^{K})\subset B$ such that $\omega_2=1$, $\omega_1$ is
a nonzero scalar, $\omega_0$ is a scalar multiple of the Casimir element of
$\liem$ and $d(\omega_0)\le 4$.
\end{lem}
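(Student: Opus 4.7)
The plan is to take $\omega$ to be a scalar multiple of $P(\Omega)$, where $\Omega \in U(\lieg)^G \subset U(\lieg)^K$ is the Casimir element of $\lieg$. Since $\Omega \in U(\lieg)^K$, Corollary \ref{cor6} immediately yields $P(\Omega)\in B$; moreover, since $\Omega \in U_2(\lieg)$, Lemma \ref{P(U_n)} forces
$$P(\Omega) = a_2\, Z^2 + a_1\, Z + a_0, \qquad a_2\in\CC,\ a_1\in U_1(\liek),\ a_0\in U_2(\liek)^M.$$
The task is thus to identify these three coefficients explicitly and rescale.

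I would compute $P(\Omega)$ using the root-space decomposition $\lieg = \lieh \oplus \bigoplus_{\alpha\in\Delta}\lieg_\alpha$ together with the splitting $\Delta = P_-\sqcup(\pm P_+)$. Choosing an orthonormal basis of $\liet$, the basis $Z/\|Z\|$ of $\liea$, and root vectors normalized so that $\langle X_\alpha,X_{-\alpha}\rangle = k_\alpha$, the $\lieh$- and $P_-$-contributions combine to give $\Omega_\liem + \|Z\|^{-2}Z^2$, while the remaining piece is
$$\sum_{\alpha\in P_+} k_\alpha^{-1}\bigl(X_\alpha X_{-\alpha} + X_{-\alpha}X_\alpha\bigr).$$
For $\alpha\in P_+$ we have $X_\alpha\in\lien$ and $X_{-\alpha} = E_\alpha - \theta X_{-\alpha}$ with $E_\alpha\in\liek$ and $\theta X_{-\alpha}\in\lien$, so $X_{-\alpha}X_\alpha$ ends in a factor of $\lien$ and lies in $U(\lieg)\lien$. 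Consequently the symmetrized cross-term reduces mod $U(\lieg)\lien$ to $[X_\alpha,X_{-\alpha}]\in\lieh$, and writing the total contribution as $L_\liet + b'Z$ with $L_\liet\in\liet$ and $b'\in\CC$, I obtain
$$P(\Omega) = \Omega_\liem + \|Z\|^{-2}Z^2 + L_\liet + b'\,Z.$$

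Two checks then remain. First, $b'$ is a positive multiple of $\rho|_\liea(Z)$ since $\sum_{\alpha\in P_+}[X_\alpha,X_{-\alpha}]$ has the same $\liea$-component as $\sum_{\alpha\in\Delta^+}[X_\alpha,X_{-\alpha}] = 2H_\rho^\flat$; and $\rho|_\liea \neq 0$ because $\lieg_o$ is noncompact, so $b'\neq 0$. Second, $P(\Omega)$ is $M$-invariant (as $\Omega$ is $G$-invariant and $M$ centralizes $\liea$), so $L_\liet\in\liet^M$. For $G_o$ locally isomorphic to F$_4$, $\liem\simeq\lieso(7,\CC)$ is semisimple, so $\liet^M = Z_\liek(\liem)\cap\liet = Z(\liem)\cap\liet = 0$, forcing $L_\liet = 0$. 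This $M$-invariance argument, and the extraction of the correct form of the Casimir in the Iwasawa realization, is the main technical obstacle; everything else is bookkeeping.

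Finally, setting $\omega = \|Z\|^2\, P(\Omega) \in P(U(\lieg)^K)\subset B$ yields $\omega_2 = 1$, $\omega_1 = \|Z\|^2\, b' \neq 0$, and $\omega_0 = \|Z\|^2\,\Omega_\liem$, a nonzero scalar multiple of the Casimir of $\liem$. Since $\omega_0 \in U_2(\liem)\subset U_2(\liek)^M$, Lemma \ref{z3} immediately gives $d(\omega_0) \le 2\cdot 2 = 4$.
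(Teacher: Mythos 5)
Your proof is correct and takes exactly the approach the paper indicates in the sentence preceding the lemma, namely that $\omega$ is a scalar multiple of $P(\Omega)$ with $\Omega$ the Casimir of $\lieg$; the paper itself gives no further details, and your computation of $P(\Omega)$ via the root-space splitting $\Delta=P_-\sqcup(\pm P_+)$, the $M$-invariance argument forcing $L_\liet=0$ (since $\liem\simeq\lieso(7,\CC)$ has trivial center), and the application of Lemma \ref{z3} for the degree bound supply precisely what is needed. The only blemishes are cosmetic: a missing $k_\alpha^{-1}$ in the line identifying the $\liea$-component of $\sum_{\alpha\in P_+}[X_\alpha,X_{-\alpha}]$, and the inclusion $U_2(\liem)\subset U_2(\liek)^M$, which should read simply that the specific element $\omega_0$ lies in $U_2(\liek)^M$ --- neither affects the conclusion.
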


\begin{prop}\label{degprop1}For any $b\in U(\liek)^M\otimes
U(\liea)$ there exist $n\in \NN_0$ such that $b\omega^n$ has the
degree property.
\end{prop}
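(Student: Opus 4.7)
The plan is to estimate directly the Kostant degrees of the coefficients of $b\omega^n$, exploiting that $\omega_2=1$, $\omega_1$ is a scalar and $d(\omega_0)\le 4$. The essential observation is that multiplying $b$ by $\omega^n$ raises the $Z$-degree by $2n$ and preserves the leading coefficient $b_m$, so the ``budget'' $m+2n+2j$ appearing on the right-hand side of the degree property grows linearly in $n$ and will eventually dominate the (fixed) Kostant degrees of the $b_i$.

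Assume $b\ne 0$, write $b=\sum_{i=0}^{m} b_i\otimes Z^i$ with $b_m\ne 0$, and set $M_b=\max_{0\le i\le m}d(b_i)$. The first step is to control the coefficients of $\omega^n$. Since the multiplication in $U(\liek)^M\otimes U(\liea)$ is the tensor product multiplication, $Z$ commutes with the coefficients; since $\omega_1\in\CC$, it commutes with $\omega_0$. Hence the multinomial formula gives
\[
(\omega^n)_k=\sum_{\substack{a+b+c=n\\ 2a+b=k}}\binom{n}{a,b,c}\,\omega_1^{b}\,\omega_0^{c},\qquad 0\le k\le 2n.
\]
By parts (a) and (b) of Proposition~\ref{d(uv)} together with $d(\omega_0)\le 4$, the Kostant degree of each monomial $\omega_1^{b}\omega_0^{c}$ is at most $4c$, and the maximum admissible value of $c=n+a-k$ is $c_{\max}=n-\lceil k/2\rceil$. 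Consequently
\[
d\bigl((\omega^n)_k\bigr)\le 4\bigl(n-\lceil k/2\rceil\bigr)\le 4n-2k.
\]

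The second step is to combine this estimate with the expansion $(b\omega^n)_{m+2n-j}=\sum_{i}b_i\,(\omega^n)_{l}$, where $l=m+2n-j-i$ and the sum runs over those $i$ with $l\in[0,2n]$. Applying Proposition~\ref{d(uv)} once more gives
\[
d\bigl((b\omega^n)_{m+2n-j}\bigr)\le\max_i\bigl\{d(b_i)+d((\omega^n)_l)\bigr\}\le\max_i\{M_b+4n-2l\}=\max_i\{M_b-2m+2j+2i\}\le M_b+2j,
\]
using $i\le m$ in the last step. Hence the degree property $d\bigl((b\omega^n)_{m+2n-j}\bigr)\le m+2n+2j$ holds as soon as $n\ge (M_b-m)/2$. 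The statement is non-vacuous because the leading coefficient $(b\omega^n)_{m+2n}=b_m\,(\omega^n)_{2n}=b_m$ is nonzero.

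The argument is essentially a bookkeeping exercise; there is no substantial obstacle. The only care required is in tracking the index ranges and invoking the multiplicativity of the Kostant degree (Proposition~\ref{d(uv)}) at each step. The two crucial inputs from the paper are $d(\omega_0)\le 4$ (Lemma~\ref{casimir}) and the fact that $\omega_2=1$, which together ensure that the $+2n$ growth in the degree-property budget ultimately overwhelms any fixed $M_b$.
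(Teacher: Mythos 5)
Your proof is correct and follows essentially the same route as the paper's: expand $\omega^n$ (you via the multinomial theorem, the paper via an explicit convolution formula for $\widetilde\omega_{k,n}$), bound the Kostant degree of each coefficient of $\omega^n$ using $\omega_2=1$, $\omega_1\in\CC$, $d(\omega_0)\le 4$ and the (sub)multiplicativity from Proposition~\ref{d(uv)}, and then choose $n$ large enough. The only cosmetic difference is that you phrase the choice of $n$ in terms of the single number $M_b=\max_i d(b_i)$, whereas the paper fixes $n$ so that $d(b_{m-j})\le m+2n+2j$ for every $j$; both choices suffice and the estimates are the same.
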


\begin{proof}Let $b=b_m\otimes Z^m+\cdots+b_0\in U(\liek)^M\otimes
U(\liea)$. Fix $n \in \NN_0$ sufficiently large so that
$d(b_{m-j})\le m+2n+2j$\, for every $0\le j\le m$. A simple
calculation shows that
\begin{equation}\label{w^n}
\omega^n = \sum_{k=0}^{2n} \widetilde\omega_{k,\,n}\otimes Z^{2n-k},
\end{equation}
where $\widetilde\omega_{k,\,n} =
\displaystyle\sum_{i=0}^{[k/2]}\textstyle\binom{n}{k-i}\textstyle\binom{k-i}{i}
\,\omega_1^{k-2i}\omega_0^i$ for $0\le k\le 2n$, and that
\begin{equation}\label{bw^n}
b\omega^n = \sum_{j=0}^{m+2n}\big(\sum_{k=0}^{\text{min}\{j,\, 2n\}}
b_{m+k-j}\,\widetilde\omega_{k,\,n}\,\big)\otimes Z^{m+2n-j}.
\end{equation}
Then if $(b\omega^n)_\ell$ denotes the coefficient of $Z^\ell$ in
$b\omega^n$, we have
\begin{equation*}
\begin{split}
d((b\omega^n)_{m+2n-j})
&\le \text{max}\{d(b_{m+k-j}\,\widetilde \omega_{k,\,n}): 0\le k \le j\}\\
&= \text{max}\{d(b_{m+k-j}) + d(\widetilde \omega_{k,\,n}): 0\le k \le j\}\\
&\le \text{max}\{m+2n+2(j-k)+2k: 0\le k \le j\}\\
&=m+2n+2j,
\end{split}
\end{equation*}
for every $0\le j\le m+2n$. Hence $b\omega^n$ has the degree
property.\qed
\end{proof}

\smallskip

It is now convenient to introduce the following notation, for any $m
\in \NN_0$ and $0\le r\le m$ define $d_r$ as follows,
\begin{equation}\label{dr}
d_r=\left[\frac{3m-2r+2}2\right].
\end{equation}
In the next lemma we obtain an upper bound on the Kostant degree of
the coefficients $b_r$ of certain $b\in U(\liek)^M \otimes
U(\liea)$.

\smallskip

\begin{lem}\label{degprop2} Let $b=b_m\otimes Z^m+\cdots+b_0\in U(\liek)^M\otimes
U(\liea)$ with $b_m \ne 0$. If $b\omega$ has the degree property
then $d(b_r)\le 2d_r$ for every $0\le r\le m$.
\end{lem}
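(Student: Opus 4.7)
The plan rests on two ingredients: the explicit form of $\omega$ from Lemma \ref{casimir}, and the parity observation that every Kostant degree occurring in $U(\liek)^M$ is even. Once these are in hand, I would run a downward induction on $r$ to obtain $d(b_r)\le 2d_r$.

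First I would establish the parity fact. By Proposition \ref{hw3}(iii), any $\gamma\in\Gamma_1$ has highest weight $\xi_{k,\ell}$ with $k$ even, and Proposition \ref{hw3}(ii) gives $d(\gamma_{k,\ell})=k+2\ell$, so $d(\gamma)$ is even. Since every $K$-isotypic component of $U(\liek)$ lies in $\Gamma_1$, this yields $d(u)\in 2\NN_o$ for every nonzero $u\in U(\liek)^M$, so any inequality $d(u)\le n$ automatically improves to $d(u)\le 2[n/2]$. Next, using $\omega=Z^2+\omega_1 Z+\omega_0$ with $\omega_1$ a nonzero scalar and $d(\omega_0)\le 4$, a direct expansion yields
$$(b\omega)_\ell = b_{\ell-2}+\omega_1\,b_{\ell-1}+b_\ell\,\omega_0,$$
with the convention $b_j=0$ for $j\notin\{0,\dots,m\}$. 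The leading coefficient of $b\omega$ is $b_m\ne0$, so $b\omega$ has degree $m+2$, and the hypothesis that $b\omega$ has the degree property translates into $d((b\omega)_\ell)\le 3m+6-2\ell$ for $0\le\ell\le m+2$; solving for $b_r$ gives
$$b_r = (b\omega)_{r+2}-\omega_1\,b_{r+1}-b_{r+2}\,\omega_0.$$

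With these preparations I would carry out the downward induction, invoking parts (a) and (b) of Proposition \ref{d(uv)} together with $d(\omega_0)\le 4$ at each step. For $r=m$, $b_m=(b\omega)_{m+2}$ gives $d(b_m)\le m+2$, whence $d(b_m)\le 2[(m+2)/2]=2d_m$ by parity. For $r=m-1$ the identity collapses to $b_{m-1}=(b\omega)_{m+1}-\omega_1 b_m$, producing $d(b_{m-1})\le\max\{m+4,m+2\}=m+4$, which parity upgrades to $d(b_{m-1})\le 2d_{m-1}$. For $r\le m-2$ the three summands on the right contribute, respectively, at most $3m-2r+2$, $2d_{r+1}$ and $2d_{r+2}+4$; a brief check on the parity of $m$ shows that the maximum is always $3m-2r+2$, and one final appeal to parity gives $d(b_r)\le 2[(3m-2r+2)/2]=2d_r$, closing the induction.

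The only delicate point will be this last rounding step: a naive combination of the degree property of $b\omega$ with the subadditivity of $d$ yields the bound $3m-2r+2$, which matches $2d_r$ only when $m$ is even; when $m$ is odd the raw bound is off by one, and it is precisely the parity of the Kostant degree on $U(\liek)^M$ that closes this gap.
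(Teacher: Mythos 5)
Your proof is correct and follows essentially the same route as the paper's: expand $b\omega$ coefficient‑wise using the structure of $\omega$ from Lemma \ref{casimir}, solve for $b_r$, bound $d(b_r)$ via Proposition \ref{d(uv)} and $d(\omega_0)\le 4$, and use the parity of the Kostant degree to close the gap when $m$ is odd. The only cosmetic difference is that you strengthen the induction hypothesis to $d(b_{r'})\le 2d_{r'}$ at each step, whereas the paper proves the raw bound $d(b_{m-j})\le m+2+2j$ by induction and invokes parity once at the very end; both variants yield identical conclusions.
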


\begin{proof} Let $(b\omega)_\ell$ denote the coefficient of $Z^\ell$ in
$b\omega$. It follows from \eqref{w^n} and \eqref{bw^n}, or directly by
computing $b\omega$, that
\begin{equation}\label{bw}
b_{m-j}= (b\omega)_{m+2-j}- b_{m-j+1}\,\omega_1-b_{m-j+2}\,\omega_0
\end{equation}
for $0\le j\le m$, with the understanding that $b_{m+1}=b_{m+2}=0$.
Then, since $\omega_1$ is a scalar and $d(\omega_0)\le 4$, from
\eqref{bw} we obtain that
\begin{equation}\label{bw1}
d(b_{m-j})\le \text{max}\{ d((b\omega)_{m+2-j}), d(b_{m-j+1}),
d(b_{m-j+2})+ 4\}.
\end{equation}
Hence, using \eqref{bw1} and the fact that $b\omega$ has the degree
property, it follows by induction on $j$ that $d(b_{m-j})\le m+2+2j$
for every $0\le j\le m$. Now, since the Kostant degree of any element of
$U(\liek)^M$ is even (see (ii) and (iii) of Proposition \ref{hw3}),
it follows that $d(b_r)\le 2d_r$ for every $0\le r\le m$.\qed
\end{proof}

\medskip

Let $b=b_m\otimes Z^m+\cdots+b_0\in B$ be such that $d(b_r)\le 2d_r$
for $0\le r \le m$, where $d_r$ is as in \eqref{dr}. Using Proposition
\ref{hw3} and the above bound on $d(b_r)$ we can decompose the coefficients
$b_r$ of $b$ as follows,
\begin{equation}\label{types}
b_r=\sum_{t=0}^{2d_r}\,\, \sum_{\max\{0,\,t-d_r\}\le
i\le[t/2]}b^r_{2i,\,t-2i}  \quad \quad \text{for} \quad \quad 0\le r\le m,
\end{equation}
where $b^r_{2i,t-2i}$ is the component of $b_r$ in the isotypic component
of $U(\liek)^M$ of type $(2i,t-2i)$. Consider now the following linear
subspace of $B$,
\begin{equation}\label{Btilde}
\widetilde B=\{b\in B:\, b^{2k}_{2i,j}=0 \text{ if } i+j\le k \text{
and } 0\le 2k\le\deg(b)\}.
\end{equation}
That is, $\widetilde B$ consists of the elements $b\in B$ such that
the $K$-types $b^{2k}_{2i,j}$ that occur in the coefficient $b_{2k}$
of $b$, have Kostant degree greater than $2k$ for all $k$ such that
$0\le 2k \le \deg(b)$.

\begin{prop}\label{Btilde1}
Let $b=b_m\otimes Z^m+\cdots+b_0\in B$, $b_m\ne0$, and $d(b_r)\le
2d_r$ for\, $0\le r \le m$. Then there exist $\widetilde b \in
\widetilde B$ such that $d(\widetilde b_r)\le 2d_r$ for $0\le r \le
m$, $\widetilde b_m=b_m$ if $m$ is odd, and $d(b_m-\, \widetilde
b_m)\le m$ if $m$ is even. Moreover $\widetilde
b^{r}_{2i,j}=b^{r}_{2i,j}$ if $i+j=d_r$ for every $0\le r \le m$.
\end{prop}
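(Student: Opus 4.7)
The plan is to construct $\widetilde b$ from $b$ by a descending induction on the coefficient index, iteratively subtracting elements of $P(U(\lieg)^K)\subset B$ in order to eliminate the bad isotypic components from each even-indexed coefficient while leaving the top components untouched.

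Process positions $r=m,m-1,\dots,0$ in decreasing order. At odd positions there is no condition coming from \eqref{Btilde}, so no modification is needed. At an even position $r=2k$, set
\[
\beta_{2k}\;=\;\sum_{i+j\le k} b^{2k}_{2i,j}\;\in\;U(\liek)^M
\]
(with $b$ replaced at this stage by the element produced by the previous steps). By Proposition~\ref{hw3}(ii) every summand has Kostant degree at most $2k$, so $d(\beta_{2k})\le 2k$. Since $2k$ is even, the Remark following Theorem~\ref{main'}---recall that the nontrivial element of $W$ acts on $\lieg$ as the Cartan involution, hence trivially on $U(\liek)$ and by $Z\mapsto -Z$ on $U(\liea)$---shows that $\beta_{2k}\otimes Z^{2k}$ is $W$-invariant. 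Proposition~\ref{cor9} then yields $u\in U(\lieg)^K$ such that $P(u)\in B$ has leading term $\beta_{2k}\otimes Z^{2k}$. Replacing $b$ by $b-P(u)$ removes exactly the bad part at level $2k$, at the cost of modifying lower-indexed coefficients in a controlled way.

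The cascading effect on lower coefficients is controlled by Theorem~\ref{degP(u)'}: writing $P(u)=\sum_j c_j\otimes Z^j$ with $c_{2k}=\beta_{2k}$, we have $d(c_{r-\ell})\le r+2\ell$ for every $\ell\ge 0$. The decisive estimate is
\[
r+2\ell\;<\;2\,d_{r-\ell},
\]
which, taking the floor in \eqref{dr} into account, reduces to $3r<3m+2$ in one parity case and $3r<3m+1$ in the other, hence holds strictly for every $r\le m$. This single inequality simultaneously propagates the degree bound $d(b_{r-\ell})\le 2d_{r-\ell}$ through the induction and guarantees that no top component $b^{r-\ell}_{2i,j}$ with $i+j=d_{r-\ell}$, which has Kostant degree exactly $2d_{r-\ell}$, is ever disturbed by a subtraction coming from a higher level.

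Taking $\widetilde b$ to be the element produced after processing $r=0$, the four required assertions follow. Membership $\widetilde b\in\widetilde B$ is by construction. The matching $\widetilde b^r_{2i,j}=b^r_{2i,j}$ for $i+j=d_r$ is preserved at every step by the strict inequality above. The bound $d(\widetilde b_r)\le 2d_r$ likewise follows by induction. Finally, when $m$ is odd, the first subtraction occurs at $r=m-1$, of degree $\le m-1<m$, so $\widetilde b_m=b_m$; when $m$ is even, the only subtraction affecting $b_m$ is the first one, at $r=m$, which removes precisely $\beta_m$ of Kostant degree $\le m$, yielding $d(b_m-\widetilde b_m)\le m$.

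The main obstacle is the bookkeeping of the inductive invariants: verifying the decisive inequality $r+2\ell<2d_{r-\ell}$ with proper attention to the floor in \eqref{dr} and to both parities of $m$, and checking that the bad part $\beta_{r}$ at each stage still satisfies $d(\beta_r)\le r$ after the previous subtractions (which follows at once from Theorem~\ref{degP(u)'}). All the existence ingredients---Proposition~\ref{cor9}, the degree estimate Theorem~\ref{degP(u)'}, and the explicit description of the $W$-action on $U(\liek)$ for F$_4$---are already in place, so the argument reduces essentially to this careful accounting.
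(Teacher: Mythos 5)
Your proposal is correct and follows essentially the same route as the paper: subtract elements of $P(U(\lieg)^K)$ supplied by Proposition~\ref{cor9} at each even coefficient index, descending from the top, using the $W$-invariance of $\beta_{2k}\otimes Z^{2k}$ (from the Remark after Theorem~\ref{main'}) and the a priori bound of Theorem~\ref{degP(u)'} to keep the degree estimate $d(b_r)\le 2d_r$ intact and the top-degree $K$-types undisturbed. The only cosmetic discrepancy is that the inequality $d(\beta_r)\le r$ for the part subtracted at each stage is immediate from how $\beta_r$ is defined (it collects exactly the $K$-types of Kostant degree $\le r$), rather than being a consequence of Theorem~\ref{degP(u)'}; Theorem~\ref{degP(u)'} is what controls the cascading into \emph{lower} coefficients, which you treat correctly via the strict inequality $r+2\ell<2d_{r-\ell}$.
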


\begin{proof} Let $b=b_m\otimes Z^m+\cdots+b_0\in B$ be such that
$b_m\ne0$ and $d(b_r)\le 2d_r$ for $0\le r \le m$. Set $p=2[m/2]$ and
using \eqref{types} define,
\begin{equation*}
c_p=\sum_{t=0}^p \,\,\sum_{\max\{0,\,t-\tfrac {p}{2}\}\le i\le [t/2]}b^p_{2i,\,t-2i}.
\end{equation*}
That is, $c_p$ contains all the $K$-types of $b_p$ of Kostant degree
smaller or equal to $p$. Hence, $c_p\in U(\liek)^M$ and $d(c_p)\le
p$. Since $p$ is even $c_p\otimes Z^p\in\big(U(\liek)^M\otimes
U(\liea)\big)^W$. Then from Proposition \ref{cor9} it
follows that $c_p\otimes Z^p$ is the leading term of an element
$c^{(p)}=c_p\otimes Z^p+\cdots\in P(\U(\lieg)^K).$ Now define
$b^{(p)}=b-c^{(p)}\in B$. All the $K$-types that occur in the
$p$-coefficient of $b^{(p)}$ have Kostant degree greater  than $p$
and, since $c^{(p)}\in P(\U(\lieg)^K)$, it follows from Theorem
\ref{degP(u)'} that $d(b^{(p)}_r)\le 2d_r$ for\, $0\le r \le m$.
Moreover, the $K$-types of Kostant degree $2d_r$ of $b^{(p)}_r$ and
$b_r$ are the same for $0\le r \le m$.

Considering now the $(p-2)$-coefficient of $b^{(p)}$ we construct,
in a similar way, elements $c^{(p-2)}\in P(\U(\lieg)^K)$ and
$b^{(p-2)}=b^{(p)}-c^{(p-2)}\in B$, such that the coefficients of
$b^{(p-2)}$ corresponding to degrees greater than $p-2$ are the same
as those of $b^{(p)}$, and all the $K$-types that occur in the
$(p-2)$-coefficient of $b^{(p-2)}$ have Kostant degree greater than
$p-2$. Moreover, since $c^{(p-2)}\in P(\U(\lieg)^K)$, Theorem
\ref{degP(u)'} implies that $d(b^{(p-2)}_r)\le 2d_r$ for\, $0\le r
\le m$, and that the $K$-types of Kostant degree $2d_r$ of $b^{(p-2)}_r$
and $b_r$ are the same for every $0\le r \le m$.

Continuing in this way we obtain a sequence
$b^{(p)},b^{(p-2)},\dots,b^{(0)}$ of elements in $B$ of degree at
most $m$. If we set $\widetilde b=b^{(0)}$ it is clear that
$\widetilde b \in \widetilde B$ and that $\widetilde b$ has all the
required properties.\qed
\end{proof}

\smallskip

Finally, in Proposition \ref{Btilde4} below, we show that next
theorem implies Theorem \ref{main'} (and therefore Theorem \ref{main}).
The proof of Theorem \ref{Btilde2} will be done in the next section.

\begin{thm}\label{Btilde2}
Let $b=b_m\otimes Z^m+\cdots+b_0 \in \widetilde B$ be such that
$d(b_r)\le 2d_r$ for every $0\le r \le m$. Then $b=0$.
\end{thm}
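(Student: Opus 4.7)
The plan is to argue by induction on the degree $m$ of $b$, the crux being to show that the leading coefficient $b_m$ vanishes; once $b_m=0$, the residual polynomial $b-b_m\otimes Z^m$ has strictly smaller degree while still lying in $\widetilde{B}$, and after a short recalibration of the Kostant-degree bounds the inductive hypothesis finishes the job. Multiplication by a suitable power of $\omega$ (Lemma~\ref{casimir}), combined with Proposition~\ref{degprop1} and the remark on parities in Proposition~\ref{hw3}, should make the bookkeeping transparent.

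The analysis of $b_m$ exploits a tight two-sided constraint: from above, $d(b_m)\le 2d_m$; and, when $m$ is even, from below, the $\widetilde{B}$ condition forbids Kostant degree $\le m$. In that case $2d_m=m+2$, and since Kostant degrees in $\Gamma_1$ are even by Proposition~\ref{hw3}, every $K$-type $\gamma_{k,\ell}$ appearing in $b_m$ must satisfy $k+2\ell=m+2$ exactly. The main equation is Corollary~\ref{Eb}: $\dot E^{m+1}(b_m)\equiv 0\bmod U(\liek)\liem^+$. Decomposing $b_m=\sum_{(k,\ell)}b^m_{k,\ell}$ and applying Proposition~\ref{hw3}(iv), we see that $\dot E^{m+1}$ annihilates $V_{k,\ell}^M$ precisely when $m+1>k+\ell$; Proposition~\ref{Techo} gives the explicit shape of the surviving vectors inside the $\{H_1,X_1,X_{-1}\}$-module generated by the highest weight vector, and iterated action of $\dot X_1$ extracts, up to a nonzero scalar, the $\liek^+$-dominant vector of each isotypic component.

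For those isotypic components, Theorem~\ref{k+dom} forces the $\liek^+$-dominant vector, and hence the component itself, to vanish. For $K$-types where a direct $\liek^+$-dominant extraction is obstructed, I will first refine the congruence modulo $U(\liek)\liem^+$ to a congruence modulo the smaller ideal $U(\liek)\liey$ via Corollary~\ref{q+dom} (or Theorem~\ref{m+/n}), and then isolate the contributions coming from different powers of $E$ via Proposition~\ref{u+vE} and separate by parity via Proposition~\ref{parimpar}. Jointly, these transversality tools convert the single congruence $\dot E^{m+1}(b_m)\equiv 0$ into independent vanishing statements, one per surviving $K$-type component, yielding $b_m=0$. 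The descent through the remaining coefficients $b_{m-1},\ldots,b_0$ is then driven by the triangular system $\epsilon(\ell,n)\equiv 0$ of Theorem~\ref{thm2}, essentially in parallel with Section~6 of~\cite{BCT}.

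The main obstacle is the odd-$m$ case, where the $\widetilde{B}$ condition places no direct lower bound on the Kostant degrees appearing in $b_m$, so a wider family of $K$-types survives and must be ruled out by more delicate means. Here the $\{H_1,X_1,X_{-1}\}$-module structure from Proposition~\ref{Techo} has to be combined with the parity separation of Proposition~\ref{parimpar} and the full force of Theorem~\ref{k+dom}; the translation of the Sp$(n,1)$ arguments of~\cite{BCT} to the richer $K$-module structure of F$_4$ is precisely the point at which the transversality apparatus of Section~\ref{trans.} must be deployed most carefully, and this is what I expect to be the technical heart of the proof.
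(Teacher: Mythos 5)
Your proposal does not reproduce the mechanism that actually drives the proof, and the step it leans on most heavily is not strong enough. The single congruence $\dot E^{m+1}(b_m)\equiv 0\bmod U(\liek)\liem^+$ coming from Corollary \ref{Eb} does \emph{not} decouple the $K$-types of $b_m$: after applying $\dot E^{m+1}$, the contributions from the surviving types $(m{+}2,0)$ and $(m,1)$ land in the same weight space modulo $U(\liek)\liem^+$ and can cancel, so you cannot conclude that each component vanishes from this one relation. Worse, the $\widetilde B$ hypothesis also imposes nontrivial conditions on $b_{m-2},b_{m-4},\dots$, and the equations defining $B$ (Theorem \ref{thm2}) genuinely couple all coefficients $b_r$ together; nothing in your argument confronts that coupling. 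What the paper actually does is assemble, for each pair $(T,n)$, a \emph{system} of equations (Proposition \ref{sist1}, Proposition \ref{sist2}, Theorem \ref{Thm 6}, Theorem \ref{sistLR}, Theorem \ref{sistLR2}) whose unknowns are the $\liek^+$-dominant vectors $u^r_{T-r-n,n}$ attached to the $K$-types on a fixed skew diagonal across \emph{all} coefficients $b_r$, and then runs a downward induction on the skew-diagonal label $T$ (the propositional function $P(T)$), not a direct induction on the coefficient index. The transversality results you invoke (Theorem \ref{m+/n}, Theorem \ref{k+dom}, Propositions \ref{u+vE} and \ref{parimpar}) appear, but only as tools to pass from the congruence $\bmod\,U(\liek)\liem^+$ to an honest equality once the system has been set up; they are not a substitute for the system.

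The genuinely missing ingredient is the linear-algebraic core: the paper identifies the coefficient matrix of the system in Theorem \ref{sistLR2} with the matrix $A(s)$ studied at length in \cite{BCT}, shows $R_F(T,n)\subset R(T,n)$, and then invokes Propositions 6.21 and 6.22 (via the determinant formula of Theorem 6.15) of \cite{BCT} to conclude that $P(T)\Rightarrow P(T-1)$. That determinant analysis — including the handling of the degenerate cases where $A(s)$ is singular by swapping in extra equations from Theorem \ref{Thm 6} — is the technical heart of the proof, and your proposal contains no trace of it. Finally, the $m=0$ base case does not follow from the generic argument: the paper treats it separately, multiplying $b^2$ by $\omega$ and using $d(b^2)=2d(b)$ from Proposition \ref{d(uv)} to manufacture an element of $\widetilde B$ of positive degree and derive a contradiction; your sketch omits this case entirely.
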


If we assume that Theorem \ref{Btilde2} holds we obtain the
following corollary.

\begin{cor}\label{Btilde3}
Let $b=b_m\otimes Z^m+\cdots+b_0 \in B$ be such that $b_m \ne 0$ and
$b\omega$ has the degree property. Then $m$ is even and $b$ has the
degree property.
\end{cor}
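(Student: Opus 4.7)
The plan is to deduce Corollary \ref{Btilde3} as a fairly direct consequence of Theorem \ref{Btilde2}, combined with the preparatory results Lemma \ref{degprop2} and Proposition \ref{Btilde1}. The overall strategy is: start from the hypothesis that $b\omega$ has the degree property, upgrade this to a uniform Kostant-degree bound on every coefficient $b_r$, project $b$ into $\widetilde B$ via Proposition \ref{Btilde1}, and then apply Theorem \ref{Btilde2} to force vanishing of the projected element. The information we recover from that vanishing is exactly the parity statement and the degree property for $b$ itself.

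Concretely, I would proceed in the following order. First, apply Lemma \ref{degprop2} to the hypothesis that $b\omega$ has the degree property, obtaining the a priori estimate $d(b_r)\le 2d_r$ for every $0\le r\le m$, where $d_r=[(3m-2r+2)/2]$. This puts $b$ squarely in the setting of Proposition \ref{Btilde1}, which produces an element $\widetilde b\in\widetilde B$ with $d(\widetilde b_r)\le 2d_r$, satisfying $\widetilde b_m=b_m$ when $m$ is odd, $d(b_m-\widetilde b_m)\le m$ when $m$ is even, and crucially $\widetilde b^{r}_{2i,j}=b^{r}_{2i,j}$ whenever $i+j=d_r$. Second, apply Theorem \ref{Btilde2} (assumed) to conclude $\widetilde b=0$.

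Third, split according to the parity of $m$. If $m$ were odd, then $\widetilde b=0$ would force $b_m=\widetilde b_m=0$, contradicting the hypothesis $b_m\ne0$; hence $m$ must be even. Fourth, fix $m$ even and extract the degree property from the matching condition $\widetilde b^{r}_{2i,j}=b^{r}_{2i,j}$ on the top-Kostant-degree layer. Since $\widetilde b=0$, this says $b^{r}_{2i,j}=0$ whenever $i+j=d_r$, i.e.\ the $K$-isotypic components of $b_r$ of Kostant degree exactly $2d_r$ all vanish. Combining with the earlier bound $d(b_r)\le 2d_r$ and the fact (from parts (ii)--(iii) of Proposition \ref{hw3}) that Kostant degrees in $U(\liek)^M$ are even, this improves the bound to $d(b_r)\le 2d_r-2$.

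The final step is the routine arithmetic check that $2d_r-2$ matches the degree property exponent. Writing $r=m-j$ with $0\le j\le m$ and using that $m$ is even, one has $3m-2r+2=m+2j+2$, which is even, so $d_r=(m+2j+2)/2$ and $2d_r-2=m+2j$; thus $d(b_{m-j})\le m+2j$ for all $0\le j\le m$, which is precisely the degree property for $b$. There is no real obstacle here beyond correctly tracking the parity and the top-degree identification in Proposition \ref{Btilde1}; the substantive content of the argument is entirely concentrated in Theorem \ref{Btilde2}, which is deferred to the next section.
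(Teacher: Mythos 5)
Your proposal is correct and follows the same route as the paper: apply Lemma \ref{degprop2} to get $d(b_r)\le 2d_r$, invoke Proposition \ref{Btilde1} to produce $\widetilde b\in\widetilde B$, force $\widetilde b=0$ by Theorem \ref{Btilde2}, and then read off the parity of $m$ and the vanishing of the top-Kostant-degree layer of each $b_r$. You merely add the explicit arithmetic check that $2d_r-2=m+2j$ when $r=m-j$ and $m$ is even, which the paper leaves implicit.
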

\begin{proof} Since $b\omega$ has the degree property, it follows
from Lemma \ref{degprop2} that $d(b_r) \le 2d_r$ for $0\le r \le m$.
Then Proposition \ref{Btilde1} implies that there exist $\widetilde
b \in \widetilde B$ such that $d(\widetilde b_r)\le 2d_r$ for $0\le
r \le m$, $\widetilde b_m=b_m$ if $m$ is odd and $\widetilde
b^{r}_{2i,j}=b^{r}_{2i,j}$ if $i+j=d_r$ for $0\le r \le m$. On the
other hand, Theorem \ref{Btilde2} implies that $\widetilde b=0$.
Hence, $m$ must be even and $b^{r}_{2i,j}=0$ if $i+j=d_r$ for $0\le
r \le m$, which implies that $b$ has the degree property.\qed
\end{proof}

\begin{prop}\label{Btilde4}Let $b=b_m\otimes Z^m+\cdots+b_0 \in B$
with $b_m \ne 0$. Then, $m$ is even and $b$ has the degree property.
In particular $d(b_m)\le m$, and therefore Theorem \ref{main'} holds.
\end{prop}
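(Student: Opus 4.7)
The plan is to derive Proposition \ref{Btilde4} as a formal consequence of Corollary \ref{Btilde3} and Proposition \ref{degprop1}, by a descending induction that peels off one factor of $\omega$ at a time. The key structural fact is that $\omega \in P(U(\lieg)^K) \subset B$ has leading term $1 \otimes Z^2$ (Lemma \ref{casimir}), so multiplying any element of $B$ by $\omega$ raises its degree by exactly $2$ and preserves its leading coefficient.

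First I would fix $b = b_m\otimes Z^m + \cdots + b_0 \in B$ with $b_m \ne 0$ and invoke Proposition \ref{degprop1} to choose $n \in \NN_0$ so that $b\omega^n$ has the degree property. This supplies the base of the induction. I would then prove by downward induction on $k \in \{n, n-1, \dots, 0\}$ the following statement: $b\omega^k$ lies in $B$, has leading coefficient $b_m \ne 0$ (hence degree $m+2k$), and enjoys the degree property. The inductive step is immediate from Corollary \ref{Btilde3} applied to $b\omega^k$: the hypotheses $b\omega^k \in B$ and $(b\omega^k)_{m+2k} = b_m \ne 0$ are clear, and by the inductive hypothesis $(b\omega^k)\omega = b\omega^{k+1}$ has the degree property. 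The corollary therefore yields that $m+2k$ is even (equivalently, $m$ is even) and that $b\omega^k$ has the degree property.

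Setting $k=0$ at the end of the induction gives both conclusions of the proposition: $m$ is even and $b$ has the degree property. Specializing the degree property to $j=0$ yields $d(b_m)\le m$. Combined with the Remark following Theorem \ref{main'}, which says that in the F$_4$ setting it suffices for $m$ to be even in order to ensure that $b_m\otimes Z^m$ lies in $\bigl(U(\liek)^M\otimes U(\liea)\bigr)^W$, this produces both assertions of Theorem \ref{main'}; in view of Proposition \ref{main''} this in turn yields Theorem \ref{main}.

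There is no real obstacle inside the proof of Proposition \ref{Btilde4} itself: once Corollary \ref{Btilde3} is available, the argument is a short, purely formal induction whose only moving parts are the observation that $\omega$ has leading term $1\otimes Z^2$ and the algebraic closure of $B$. The genuine difficulty, of course, is pushed one level down, into Corollary \ref{Btilde3}, which rests on Theorem \ref{Btilde2}. That is the substantive statement to be proved in the next section, and where the transversality results of Section \ref{trans.} together with the detailed structure of the $M$-spherical $K$-modules from Section \ref{KMsph} will actually be used.
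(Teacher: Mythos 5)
Your proof is correct and follows essentially the same line as the paper: invoke Proposition \ref{degprop1} to get $n$ with $b\omega^n$ having the degree property, then descend via Corollary \ref{Btilde3}, using that $\omega$ has leading term $1\otimes Z^2$ so the leading coefficient of $b\omega^k$ is still $b_m\neq 0$ and $b\omega^k\in B$ because $B$ is an algebra. The only cosmetic difference is that the paper first applies Corollary \ref{Btilde3} once to $b\omega^{n-1}$ to extract the evenness of $m$ and then runs the induction, whereas you package both conclusions into a single downward induction on $k$; both formulations tacitly assume $n\ge 1$, which is harmless since Proposition \ref{degprop1} permits taking $n$ arbitrarily large.
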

\begin{proof}
Let $b=b_m\otimes Z^m+\cdots+b_0 \in B$ be as in the
statement of the theorem. It follows from Proposition \ref{degprop1}
that there exist $n\in \NN_0$ such that $b\omega^n$ has the degree
property. Now, since $b\omega^{n-1}= b_m \otimes Z^{m+2(n-1)}+ \dots
\in B$ and $b_m \ne 0$, it follows from Corollary \ref{Btilde3} that
$m+2(n-1)$ is even and $b\omega^{n-1}$ has the degree property.
Hence $m$ is even, and from Corollary \ref{Btilde3} and induction on
$k$ it follows that $b\omega^{n-k}$ has the degree property for
every $0\le k\le n$. In particular $b$ has the degree property, as
we wanted to prove.\qed
\end{proof}

\section{Proof of Theorem \ref{Btilde2}}\label{mainproof}

Our goal in this section is to prove Theorem \ref{Btilde2}. To do this,
given any $b=b_m\otimes Z^m+\cdots+b_0\in B$ such that $d(b_r)\le 2d_r$
for $0\le r\le m$, we will construct a linear system of equations in $U(\liek)$
where the unknowns are $\liek^+$-dominant vectors associated to certain
$K$-types of the coefficients of $b$ (see Theorem \ref{sistLR}).
This system will allow us to carry out a decreasing induction process that,
when applied to $b\in\widetilde B$, will lead to the proof of
Theorem \ref{Btilde2}.

\smallskip

Let $b=b_m\otimes Z^m+\cdots+b_0\in B$ be such that $d(b_r)\le 2d_r$
for $0\le r\le m$. As indicated in \eqref{types} we can decompose the
coefficient $b_r$ of $b$ as follows,
\begin{equation}\label{types1}
b_r=\sum_{t=0}^{2d_r}\, \sum_{\max\{0,\,t-d_r\}\le
i\le[t/2]}b^r_{2i,t-2i}  \quad \quad \text{for} \quad \quad 0\le r\le m.
\end{equation}
We find it very convenient to keep in mind the following array of the $K$-types
that occur in $b_r$.
\begin{align}\label{array}
\begin{matrix} b_r= b^r_{2d_r,0}&+\;\;\; b^r_{2d_r-2,1}&+\;\;\;
b^r_{2d_r-4,2}&+\;\;\; b^r_{2d_r-6,3}&+\;\;\; \cdot & \cdot & \cdot
&
+\;\;\; b^r_{0,d_r}\;\;\;\\
                                  &+\;\;\; b^r_{2d_r-2,0}&+\;\;\;
b^r_{2d_r-4,1}&+\;\;\; b^r_{2d_r-6,2}&+\;\;\; \cdot & \cdot & \cdot
& +\;\;\;
b^r_{0,d_r-1}\\
                                  &                      &+\;\;\;
b^r_{2d_r-4,0}&+\;\;\; b^r_{2d_r-6,1}&+\;\;\; \cdot & \cdot & \cdot
&
+\;\;\; b^r_{0,d_r-2}\\
                                  &                      &
     &+\;\;\; b^r_{2d_r-6,0}&+\;\;\; \cdot & \cdot & \cdot & +\;\;\;
b^r_{0,d_r-3}\\
                                  &                      &
     &                      &\;\;\;\;\;\;  \cdot & \cdot & \cdot
&  \;\;\;\cdot \\
                                  &                      &
     &                      &                    & \cdot      & \cdot
&  \;\;\;\cdot \\
                                  &                      &
     &                      &                    &            & \cdot &
\;\;\;\cdot\\
                                  &                      &
     &                      &              &       &       & +\;\;\;\;
b^r_{0,0}\;\;\;.
\end{matrix}
\end{align}

Observe that the parameter $t$ used in (\ref{types1})
may be regarded as a label for the skew diagonals of the array
(\ref{array}). In fact, for $0\le t\le 2d_r$ we shall refer to the
set $\{b^r_{2i,t-2i}:\max\{0,t-d_r\}\le i\le[t/2]\}$ as the skew
diagonal associated to $t$. Also observe that the Kostant degree is
constant along the rows of the array \eqref{array}, it takes the
values $2d_r,2d_r-2, \dots ,0$ from the top to the bottom row of the
array corresponding to $b_r$.

\smallskip

Let $T\in \NN_o$ denote the label of the skew diagonals in the array
corresponding to $b_0$. We will use $T$ as a parameter for a decreasing
induction. For $m\le T\le 2d_0$ if $m$ is even, and $m-1\le T\le 2d_0$ if
$m$ is odd, consider the following propositional function
associated to $b$,
\begin{equation}\label{Pfunction}
P(T): b_r=\sum_{t=0}^{\min\{T-r,2d_r\}}\sum_{\max\{0,t-d_r\}\le
i\le[t/2]} b^r_{2i,t-2i}, \quad\quad 0\le r\le m.
\end{equation}
\noindent Observe that $P(T)$ holds if and only if $b^r_{2i,t-2i}=0$ for
$t>\min\{T-r,2d_r\}$ for every $0\le r\le m$. Also, in view of \eqref{types},
it follows that $P(2d_0)$ holds. This will be the starting point of our inductive
argument.

\smallskip

Let $E$, $X_{\delta}$, $H$, $Y$ and $\widetilde Y$ be as in
Section \ref{F4}. Recall that $\dot E(H)=-\frac12E$,
$\dot X_{\delta}(\widetilde Y)=X_{\delta}$ and $\dot X_{\delta}(H)=
\dot E(X_{\delta})=0$. In the following lemma we state some properties
of the derivations $\dot E$ and $\dot X_{\delta}$, we refer to Lemma 6.1
of \cite {BCT} for their proof.

\begin{lem}\label{Lemma 4}

(i) $\dot E^k(H^k)=k!(-{\frac12}E)^k$ and $\dot E^k(H^j)=0$ if $k>j$.

\noindent (ii) $\dot E^k\varphi_k(H)=(-{\frac12}E)^k$, where $\varphi_k$ is as in
\eqref{phi}.

\noindent (iii) $\dot X_{\delta}^k((-\widetilde Y)^k)=k!(-X_{\delta})^k$
and $\dot X_{\delta}^k((-\widetilde Y)^j)=0$ if $k>j$.

\noindent (iv) $\dot X_{\delta}^k\varphi_k(a-\widetilde Y)=(-X_{\delta})^k$ for any
$a\in \CC$.
\end{lem}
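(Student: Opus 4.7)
The proof rests on the commutation relations already recorded in Section \ref{F4}, namely $\dot H(E)=\tfrac12 E$ and $\dot X_\delta(\widetilde Y)=X_\delta$, which give $[E,H]=-\tfrac12 E$ and $[X_\delta,-\widetilde Y]=-X_\delta$. The plan is to treat (i) and (iii) by the same formal computation (they are the ``same'' lemma with parameters $(E,H,a)=(E,H,-\tfrac12)$ and $(X_\delta,-\widetilde Y,-1)$), and then read off (ii) and (iv) from (i) and (iii) using the explicit form \eqref{phi} of the polynomials $\varphi_k$.

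First I would establish the key shift formula. From $[E,H]=-\tfrac12 E$ one obtains $EH=(H-\tfrac12)E$, and an immediate induction on $j$ gives $EH^j=(H-\tfrac12)^jE$. Hence
\begin{equation*}
\dot E(H^j)=[E,H^j]=\bigl((H-\tfrac12)^j-H^j\bigr)E.
\end{equation*}
Since $\dot E(E)=0$, a second induction (on $k$) that uses the same shift formula yields
\begin{equation*}
\dot E^{k}(H^{j})=\Bigl(\sum_{i=0}^{k}(-1)^{k-i}\tbinom{k}{i}\bigl(H-\tfrac{k-i}{2}\bigr)^{j}\Bigr)E^{k}=\bigl(\Delta^{k}_{-1/2}(x^{j})\bigr)\big|_{x=H}\,E^{k},
\end{equation*}
where $\Delta_{a}$ denotes the finite difference with step $a$. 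The classical identity $\Delta_{a}^{k}(x^{j})=0$ for $k>j$ and $\Delta_{a}^{k}(x^{k})=k!\,a^{k}$ then gives (i) at once. For (iii) I would repeat the argument verbatim replacing $(E,H,-\tfrac12)$ by $(X_\delta,-\widetilde Y,-1)$; the identity $X_\delta(-\widetilde Y)=(-\widetilde Y-1)X_\delta$ is the analogue of the shift formula, and $\Delta_{-1}^{k}(x^{k})=k!(-1)^{k}$ produces the factor $k!(-X_\delta)^{k}$.

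For (ii) and (iv), recall from \eqref{phi} that $\varphi_{k}(x)=\tfrac{1}{k!}\,x^{k}+(\text{terms of lower degree in }x)$. Thus $\varphi_{k}(H)$ is a polynomial in $H$ of degree $k$ with leading coefficient $1/k!$, so the lower-order terms are killed by $\dot E^{k}$ thanks to the second half of (i), and the leading term contributes $\tfrac{1}{k!}\cdot k!(-\tfrac12 E)^{k}=(-\tfrac12 E)^{k}$. For (iv) the polynomial $\varphi_{k}(a-\widetilde Y)$, regarded as a polynomial in $(-\widetilde Y)$ with scalar coefficients depending on $a$, again has degree $k$ and leading coefficient $1/k!$, and applying (iii) gives $(-X_\delta)^{k}$ independently of $a$.

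There is no real obstacle; the only thing one has to be careful about is the bookkeeping of signs and step sizes in the two parallel computations, and the observation that in (iv) the scalar $a$ only affects lower-order terms in $(-\widetilde Y)$ which $\dot X_\delta^{k}$ annihilates. Alternatively one can give a one-line proof of the whole lemma by noticing that the subalgebras of $U(\liek)$ generated by $\{E,H\}$ and by $\{X_\delta,\widetilde Y\}$ are isomorphic to the enveloping algebra of the two-dimensional non-abelian Lie algebra, and (i)--(iv) are universal identities in that algebra.
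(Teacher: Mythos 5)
Your argument is correct. The paper does not actually prove this lemma — it simply cites Lemma 6.1 of \cite{BCT} — so a self-contained verification is appropriate, and the route you take (turn the commutation relations $\dot E(H)=-\tfrac12E$ and $\dot X_\delta(-\widetilde Y)=-X_\delta$ into shift formulas, iterate to get a finite-difference operator, then read off (ii) and (iv) from the leading coefficient $1/k!$ of $\varphi_k$, noting that the scalar $a$ in (iv) only perturbs lower-order terms) is the natural one and surely the same in spirit as the cited proof.

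One small notational slip is worth flagging. The displayed expansion $\dot E^{k}(H^{j})=\bigl(\sum_{i=0}^{k}(-1)^{k-i}\binom{k}{i}(H-\tfrac{k-i}{2})^{j}\bigr)E^{k}$ is off by a sign when $k$ is odd; the correct expansion is $\sum_{i=0}^{k}(-1)^{k-i}\binom{k}{i}\bigl(H-\tfrac{i}{2}\bigr)^{j}$, which is what equals $\Delta^{k}_{-1/2}(x^{j})\big|_{x=H}$ with the convention $\Delta_a f(x)=f(x+a)-f(x)$. One sees the discrepancy already at $k=j=1$: the correct coefficient is $(H-\tfrac12)-H=-\tfrac12$, matching $1!\,(-\tfrac12)$, while your sum gives $H-(H-\tfrac12)=+\tfrac12$. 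Since you immediately pass to the $\Delta^{k}_{-1/2}$ notation and thereafter use only the standard identities $\Delta^{k}_{a}(x^{j})=0$ for $k>j$ and $\Delta^{k}_{a}(x^{k})=k!\,a^{k}$, the conclusion is unaffected; the equality between your explicit binomial sum and $\Delta^{k}_{-1/2}(x^j)|_{x=H}$ is the only line that does not literally hold as written.
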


The following proposition is the analogue of Proposition 6.2 of \cite{BCT}.
Its proof is the same as that of Proposition 6.2 and it is obtained by
applying $\dot X_{\delta}^{T-n-\ell}$ to $\epsilon(\ell,n)$ of
Theorem \ref{thm2}, and using Lemma \ref{cb} and Lemma \ref{Lemma 4}.
Also observe that the derivation $\dot X_{\delta}$ preserves
the ideal $U(\liek)\liem^+$.
\begin{prop}\label{sist1}
Let $b=b_m\otimes Z^m+\cdots+b_0\in B$ be such that $d(b_r)\le 2d_r$
for $0\le r\le m$, and assume that $P(T)$ holds for $m\le T\le 2d_0$.
Then for every $(\ell,n)$ such that $0\le\ell,n$ and $\ell+n\le T$ we have
\begin{equation}\label{Sumas12}
(-1)^n\Sigma_1E^n-(-1)^\ell\Sigma_2E^\ell\equiv0\quad\mod(U(\liek)\liem^+),
\end{equation}
\noindent where
\begin{align}\label{sigma12}
\begin{split}
\Sigma_1&=\sum_{(i,r)\in I_1}A_{i,r}(T,n,\ell)\,\dot X_{\delta}^{T-\ell-i}
\dot E ^{\ell+i-r}(b_r)\,E^{r-i}X_{\delta}^{i-n},\\
\Sigma_2&=\sum_{(i,r)\in I_2}A_{i,r}(T,\ell,n)\,\dot X_{\delta}^{T-n-i}
\dot E^ {n+i-r}(b_r)\,E^{r-i}X_{\delta}^{i-\ell},\\
\end{split}
\end{align}
and
$$A_{i,r}(T,n,\ell)=(-{\textstyle\frac12)}^{r-i}(-1)^{i-n}r!{\textstyle
\binom{T-n-\ell}{i-n}\binom \ell{r-i}},$$
$$I_1=\{(i,r)\in N_0^2: n\leq i\leq\min\{m,T-\ell\}, i\leq
r\leq\min\{m,i+\ell\}\},$$
$$I_2=\{(i,r)\in N_0^2: \ell\leq i\leq\min\{m,T-n\}, i\leq
r\leq\min\{m,i+n\}\}.$$
\end{prop}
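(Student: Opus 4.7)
The plan is to follow the outline indicated in the statement: apply $\dot X_\delta^{T-n-\ell}$ to the congruence $\epsilon(\ell,n)\equiv 0\mod(U(\liek)\liem^+)$ from Theorem \ref{thm2}, substitute $c_i=\sum_{j=i}^m b_jt_{ij}$ by Lemma \ref{cb}, and use Lemma \ref{Lemma 4} to bring the result into the claimed form. The preliminary observation that $\dot X_\delta$ preserves $U(\liek)\liem^+$ reduces to $[X_\delta,\liem^+]\subset\liem^+$, which is checked root by root on the generators \eqref{m^+}: the only non-zero brackets of $X_\delta$ with a generator are $[X_\delta,X_{\varphi_i}-X_{\delta_i}]$ for $i=3,4$, and these produce the root vectors $X_{\widetilde\epsilon_2+\widetilde\epsilon_i}\in\liem^+$; all other brackets vanish because the putative resulting vector in $\lieh_\liek^*$ is not a root of F$_4$.

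The first computation is to substitute $c_i=\sum_{j=i}^mb_jt_{ij}$ into $\epsilon(\ell,n)$ and expand $\dot E^\ell(b_jt_{ij})$ by Leibniz. By Lemma \ref{cb}, $\dot E^a(t_{ij})$ is a polynomial of degree $j-i-a$ in $H$ multiplied by a power of $E$, with top-order contribution $\dot E^{j-i}(t_{ij})=(-\tfrac12)^{j-i}j!\,E^{j-i}$. Reindexing $j=r$ and tracking $\binom{\ell}{r-i}$ together with $(-\tfrac12)^{r-i}r!$, the top-order monomial has the form (scalar)$\cdot\dot E^{\ell+i-r}(b_r)\,E^{r-i}\,\varphi_{i-n}(-\tfrac n2+\ell-\widetilde Y)\,E^n$; lower-order monomials still carry positive powers of $H$.

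When $\dot X_\delta^{T-n-\ell}$ is then applied, the identities $\dot X_\delta(E)=\dot X_\delta(H)=0$ and $\dot X_\delta(\widetilde Y)=X_\delta$ from Section \ref{F4}, together with $[E,X_\delta]=0$ (which holds because $\gamma_3+\delta$ is not a root of F$_4$), annihilate every summand carrying a positive power of $H$ and allow the $T-n-\ell$ derivatives to distribute between $b_r$ and $\varphi_{i-n}(-\tfrac n2+\ell-\widetilde Y)$. By Lemma \ref{Lemma 4}(iv), the $\varphi_{i-n}$-factor survives only when exactly $i-n$ of the derivatives land on it, producing $(-X_\delta)^{i-n}$; the remaining $T-\ell-i$ derivatives then act on $b_r$, giving $\dot X_\delta^{T-\ell-i}(b_r)$. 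Gathering the sign $(-1)^{i-n}$ from Lemma \ref{Lemma 4}(iv) and the binomial $\binom{T-n-\ell}{i-n}$ from the $\dot X_\delta$-Leibniz expansion with the factors from the previous step yields the scalar $A_{i,r}(T,n,\ell)$ and the monomial defining $\Sigma_1$. The symmetric computation on the second half of $\epsilon(\ell,n)$ produces $\Sigma_2$; the index sets $I_1,I_2$ encode the non-negativity constraints $T-\ell-i,\ell+i-r,r-i,i-n\ge 0$ together with the degree bound $r\le m$.

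I expect the main obstacle to be the bookkeeping: three nested Leibniz expansions (for $\dot E^\ell$ on $b_jt_{ij}$, and for $\dot X_\delta^{T-n-\ell}$ on each of its two sensitive factors) must be organized so that the intermediate contributions with strictly fewer than $i-n$ derivatives on $\varphi_{i-n}$ (still carrying a $\widetilde Y$-factor) collect, after reindexing, into contributions at a different pair $(i',r)$; the hypothesis $P(T)$ is invoked precisely to guarantee that the residual higher-skew-diagonal components of $b_r$ that would otherwise persist are zero. Since the combinatorial content is formally identical to that of Proposition 6.2 of \cite{BCT}, the argument there can be adapted with only the replacement of the ideal $U(\liek)X_2$ by $U(\liek)\liem^+$ and the use of the $X_\delta$ and $\widetilde Y$ defined in Section \ref{F4} in place of their Sp$(n,1)$-analogues.
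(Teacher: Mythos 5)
Your opening plan matches the paper's own proof sketch exactly: the paper states that the proposition is obtained by applying $\dot X_\delta^{T-n-\ell}$ to $\epsilon(\ell,n)$ of Theorem \ref{thm2}, using Lemma \ref{cb} and Lemma \ref{Lemma 4}, and observing that $\dot X_\delta$ preserves $U(\liek)\liem^+$; it then refers to Proposition 6.2 of \cite{BCT} for the (formally identical) details, which is precisely where you land as well. Your root-by-root verification that $\dot X_\delta$ preserves $\liem^+$ is also correct in substance (note the typo: the nonvanishing brackets are $[X_\delta,X_{\varphi_i}-X_{\delta_i}]$ for $i=1,2$, not $i=3,4$, since $\varphi_1,\delta_1$ and $\varphi_2,\delta_2$ are the labels in \eqref{pd1}--\eqref{pd2}), and the observation that $[E,X_\delta]=0$ because $\gamma_3+\delta$ is not a root is correct.

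One point in the middle of your computation is misattributed and should be fixed. You claim that the identities $\dot X_\delta(E)=\dot X_\delta(H)=0$ ``annihilate every summand carrying a positive power of $H$,'' and later you suggest that the terms with fewer than $i-n$ derivatives on $\varphi_{i-n}$ must ``collect, after reindexing, into contributions at a different pair $(i',r)$.'' Neither of these is quite what happens. The relations $\dot X_\delta(E)=\dot X_\delta(H)=0$ only guarantee that all $T-n-\ell$ derivatives distribute between $\dot E^p(b_r)$ and $\varphi_{i-n}(-\tfrac n2+\ell-\widetilde Y)$; they do not by themselves kill the $H$-bearing summands. What actually kills every non-top-order contribution (whether from a lower-order $\dot E$-Leibniz term on $t_{ir}$, or from fewer than $i-n$ derivatives hitting $\varphi_{i-n}$) is that each such term applies strictly more than $T-r$ combined $\dot X_\delta$- and $\dot E$-derivatives to $b_r$: writing $p$ for the $\dot E$-derivatives and $a$ for the $\dot X_\delta$-derivatives landing on $b_r$, the survival of $\dot E^q(t_{ir})$ forces $q\le r-i$ (so $p\ge\ell+i-r$) and the survival of the $\varphi$-factor forces $a\ge T-\ell-i$, so $a+p\ge T-r$ with equality only at the top-order term. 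Under $P(T)$ every $K$-type occurring in $b_r$ has Kostant parameter $k+\ell'\le T-r$, so Proposition \ref{hw3}(iv) gives $\dot X_\delta^a\dot E^p(b_r)=0$ whenever $a+p>T-r$. Thus the residual terms vanish outright --- no reindexing is involved --- and this is precisely the role of the hypothesis $P(T)$, which you do invoke at the end, so the overall logic is salvageable once this is stated accurately.
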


\smallskip

Next proposition is the analogue of Proposition 6.3 of \cite{BCT} and
its proof is the same as that of Proposition 6.3. It is obtained by
replacing $b_r$ in \eqref{sigma12} by its decomposition in $K$-types given
in \eqref{Pfunction}, then one uses Proposition \ref{hw3} (iv) to simplify
the sums $\Sigma_1$ and $\Sigma_2$, and finally one multiply both sums on
the right by $X_{\delta}^T$ and then change in each term a certain number of
$X_{\delta}$'s by the same number of $X_4$'s so that $\Sigma_1$ and
$\Sigma_2$ become weight vectors with respect to $\lieh_\liek$. Here we
use that $X_{\delta} \equiv X_4\mod(U(\liek)\liem^+)$ and that the derivation
$\dot X_{\delta}$ preserves the ideal $U(\liek)\liem^+$.

\begin{prop}\label{sist2}
Let $b=b_m\otimes Z^m+\cdots+b_0\in B$ be such that $d(b_r)\le 2d_r$
for $0\le r\le m$, and assume that $P(T)$ holds for $m\le T\le 2d_0$.
Then for every $(\ell,n)$ such that $0\le\ell,n$ and $\ell+n\le T$ we have
\begin{equation}\label{Sumas123}
(-1)^n\Sigma_1E^n-(-1)^\ell\Sigma_2E^\ell\equiv0\quad\mod(U(\liek)\liem^+),
\end{equation}
\noindent where
\begin{equation*}
\begin{split}
\Sigma_1=\sum_{\substack{(i,r)\in I_1\\ \max\{0,T-r-d_r\}\leq k
\leq[{\textstyle\frac{T-r}2}]}}&A_{i,r}(T,n,\ell)\,\dot
X_{\delta}^{T-\ell-i}\dot E^{\ell+i-r}(b^r_{2k,T-r-2k})\\
           &\times E^{r-i}X_{\delta}^{T-k}X_4^{k+i-n},\\
\Sigma_2=\sum_{\substack{(i,r)\in I_2\\ \max\{0,T-r-d_r\}\leq k
\leq[{\textstyle\frac{T-r}2}]}}&A_{i,r}(T,\ell,n)\,\dot
X_{\delta}^{T-n-i}\dot E^{n+i-r}(b^r_{2k,T-r-2k})\\
           &\times E^{r-i}X_{\delta}^{T-k}X_4^{k+i-\ell},
\end{split}
\end{equation*}
with the understanding that the $K$-types $b^r_{2k,T-r-2k}$ that do
not occur in $b_r$ are assumed to be zero. Moreover, in equation
(\ref{Sumas123}) all the terms of the left hand side are weight
vectors of weight $(2T-\ell-n)\gamma_1+T(\gamma_2+\delta)$.
\end{prop}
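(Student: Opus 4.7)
The plan is to follow the sketch immediately after the statement, in close analogy with the proof of Proposition 6.3 of \cite{BCT}. Starting from the congruence \eqref{Sumas12} of Proposition \ref{sist1}, I would use the standing assumption $P(T)$ to replace each coefficient $b_r$ appearing in \eqref{sigma12} by its decomposition into $K$-isotypic $M$-invariants,
\[
b_r=\sum_{t=0}^{\min\{T-r,\,2d_r\}}\,\sum_{\max\{0,\,t-d_r\}\le i\le[t/2]} b^r_{2i,\,t-2i}.
\]

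The crucial simplification comes from Proposition \ref{hw3}(iv). Since $\gamma_3+\delta=\tfrac12(-\widetilde\epsilon_1+3\widetilde\epsilon_2+\widetilde\epsilon_3+\widetilde\epsilon_4)$ is not a $\liek$-root, we have $[E,X_\delta]=0$, so the derivations $\dot X_\delta$ and $\dot E$ commute, and on each $\gamma_{2k,\,t-2k}$-isotypic piece $b^r_{2k,\,t-2k}$ the operator $\dot X_\delta^{T-\ell-i}\dot E^{\ell+i-r}$ acts as the representation action of $X_\delta^{T-\ell-i}E^{\ell+i-r}$. By Proposition \ref{hw3}(iv), this action vanishes unless the total order $(T-\ell-i)+(\ell+i-r)=T-r$ is at most $t$. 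Combined with the constraint $t\le T-r$ imposed by $P(T)$, only the skew-diagonal $t=T-r$ of the array \eqref{array} survives, so the inner sum over $t$ collapses and each $\Sigma$ reduces to a single sum over $k$ in the range $\max\{0,\,T-r-d_r\}\le k\le [(T-r)/2]$ displayed in the statement.

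Next I would multiply the congruence on the right by $X_\delta^T$, pushing it past $E^n$ (resp.\ $E^\ell$) using $[E,X_\delta]=0$, which turns the trailing factor $X_\delta^{i-n}$ of $\Sigma_1$ into $X_\delta^{T+i-n}$ (resp.\ $X_\delta^{i-\ell}$ into $X_\delta^{T+i-\ell}$ for $\Sigma_2$). By \eqref{ip1-im1} one has $X_\delta\equiv X_{\gamma_4}=X_4\pmod{\liem^+}$; moreover $X_{\epsilon_2}=X_{\gamma_4}-X_\delta$ commutes with both $X_4$ and $X_\delta$, since the putative weights $\widetilde\epsilon_1+2\widetilde\epsilon_2$ and $2\widetilde\epsilon_2-\widetilde\epsilon_1$ are not $\liek$-roots. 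A one-line induction then yields
\[
X_\delta^{T+i-n}\equiv X_\delta^{T-k}X_4^{k+i-n}\pmod{U(\liek)\liem^+},
\]
and the analogous identity for $\Sigma_2$. Since $\dot X_\delta$ preserves $U(\liek)\liem^+$, these substitutions are compatible with the ambient congruence and deliver the displayed form of $\Sigma_1,\Sigma_2$.

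The final step is the weight assertion, which I expect to be the most delicate bookkeeping. By Proposition \ref{Techo}, in each irreducible copy of $V_{2k,\,T-r-2k}$ the vector $\dot X_\delta^{T-\ell-i}\dot E^{\ell+i-r}(b^r_{2k,\,T-r-2k})$ equals $X_\delta^{2k-j}E^{(T-r-2k)+j}(b^r_{2k,\,T-r-2k})$ with $j=2k+\ell+i-T$, hence is a weight vector of weight $\xi_{2k,\,T-r-2k}-j\gamma_1=k(\gamma_4+\delta)+(T-r-2k)\gamma_3-(2k+\ell+i-T)\gamma_1$. Adding the weights of the trailing factors $E^{r-i}X_\delta^{T-k}X_4^{k+i-n}E^n$ and expanding $\gamma_3=\gamma_1+\gamma_2$, $\gamma_4=2\gamma_1+\gamma_2$, a direct calculation shows that the $k$- and $i$-dependence cancels and the total weight reduces to $(2T-\ell-n)\gamma_1+T(\gamma_2+\delta)$; the analogous computation for $\Sigma_2$ gives the same weight. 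The substantive content is this weight cancellation, which depends on the specific shape of the $K$-types $(2k,T-r-2k)$ forced by the truncation; the remainder of the argument is the routine algebraic manipulation of \eqref{sigma12} outlined above.
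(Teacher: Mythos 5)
Your proposal is correct and follows the same route the paper sketches (deferring to Proposition 6.3 of \cite{BCT}): decompose $b_r$ by $P(T)$, use Proposition \ref{hw3}(iv) to kill all but the $t=T-r$ skew-diagonal, right-multiply by $X_\delta^T$, and replace $k+i-n$ copies of $X_\delta$ by $X_4$; your weight bookkeeping also checks out to $(2T-\ell-n)\gamma_1+T(\gamma_2+\delta)$. One small slip in the write-up: $X_{\gamma_4}-X_\delta$ is not an $\lieh_\liek$-weight vector (it mixes the weights $\gamma_4$ and $\delta$), but the commutations you need follow directly from $[X_4,X_\delta]=0$, i.e.\ from $\gamma_4+\delta=2\widetilde\epsilon_2$ not being a root of $\liek$.
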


\smallskip

The equations (\ref{Sumas123}) may be regarded as a system of linear
equations where the unknowns, $\dot X_{\delta}^{T-j-i}
\dot E^{j+i-r}(b^r_{2k,T-r-2k})$,
are derivatives of the $K$-types that occur in the $T-r$ skew
diagonal of the coefficient $b_r$ of $b$ (see (\ref{array})).
Since the unknowns in this system are, in general, not
$\liek^+$-dominant we are going to replace the system by an
equivalent one where all the unknowns become $\liek^+$-dominant
vectors associated to the $K$-types $b^r_{2k,T-r-2k}$.

Let $\widetilde\epsilon(\ell,n)$ be the left hand side of
equation (\ref{Sumas123}). For $0\le n\le\min\{2d_m,T\}$
and $0\le L\le\min\{2m,T\}-n$ consider the following linear
combination,
\begin{equation}\label{epsiL}
\mathcal E_L(n)=\sum_{\ell=0}^L(-2)^\ell\textstyle \binom
L\ell\widetilde\epsilon(\ell,n)E^{L-\ell}X_4^{\ell+n}.
\end{equation}
Under the hypothesis of Proposition \ref{sist2} we
have $\mathcal E_L(n)\equiv0\mod(U(\liek)\liem^+)$. Also set,
\begin{equation*}
\mathcal E_L^1(n)=\sum_{\ell=0}^L(-2)^\ell\textstyle \binom
L\ell\Sigma_1E^{L-\ell}X_4^{\ell+n}
\quad \text{and}\quad \mathcal E_L^2(n)=\sum_{\ell=0}^L2^\ell\textstyle \binom
L\ell\Sigma_2X_4^{\ell+n}.
\end{equation*}
Then it follows that
\begin{equation}\label{epsiL'}
\mathcal E_L(n)=(-1)^n\mathcal E_L^1(n)E^n-\mathcal E_L^2(n)E^L.
\end{equation}

The following lemma is the analogue of Lemma 6.5 of \cite{BCT}. For the
symplectic group Sp($n$,1) the vectors $D_k(b_{2i,j})$
are $\liek^+$-dominant, however in F$_4$ this property does not hold.

\begin{lem}\label{LemDk} Let $b_{2i,j}\in U(\liek)^M$ be an
$M$-invariant element of type $(2i,j).$ For $0\leq k\leq 2i$ define,
\begin{equation}\label{Dk}
D_k(b_{2i,j})=\Sigma_{\ell=0}^k(-2)^\ell{\textstyle\binom k\ell
\binom{j+\ell}\ell^{-1}}\dot X_{\delta}^{2i-\ell}\dot
E^{j+\ell}(b_{2i,j})\,E^{k-\ell}X_4^\ell.
\end{equation}
Then $D_k(b_{2i,j})$ is a vector of weight
$i(\gamma_4+\delta)+(j+k)\gamma_3$ with respect to $\lieh_\liek$,
$\dot X(D_k(b_{2i,j})) \equiv 0\mod\!(U(\liek)\liey)$  for every
$X \in \lieq^+$ and $\dot X_1(D_k(b_{2i,j}))=0$.
\end{lem}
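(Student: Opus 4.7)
My plan is to verify in turn the three assertions of Lemma~\ref{LemDk}: the weight identity, the strict annihilation $\dot X_1(D_k(b_{2i,j}))=0$, and the congruence $\dot X(D_k(b_{2i,j}))\equiv 0\mod\!(U(\liek)\liey)$ for every $X\in\lieq^+$.

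The weight claim reduces to the identity $\gamma_4=\gamma_1+\gamma_3$, which is immediate from the explicit expressions $\gamma_4=\widetilde\epsilon_1+\widetilde\epsilon_2$, $\gamma_1=\tfrac12(\widetilde\epsilon_1+\widetilde\epsilon_2-\widetilde\epsilon_3-\widetilde\epsilon_4)$ and $\gamma_3=\tfrac12(\widetilde\epsilon_1+\widetilde\epsilon_2+\widetilde\epsilon_3+\widetilde\epsilon_4)$. Specializing Proposition~\ref{Techo} with $(k,\ell,j)$ there replaced by $(2i,j,\ell)$ here, the factor $\dot X_\delta^{2i-\ell}\dot E^{j+\ell}(b_{2i,j})$ has weight $\xi_{2i,j}-\ell\gamma_1$; right-multiplication by $E^{k-\ell}X_4^\ell$ adds $(k-\ell)\gamma_3+\ell\gamma_4$. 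The $\ell$-dependent contribution $\ell(\gamma_4-\gamma_1-\gamma_3)$ vanishes, leaving the common weight $\xi_{2i,j}+k\gamma_3=i(\gamma_4+\delta)+(j+k)\gamma_3$.

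For $\dot X_1(D_k(b_{2i,j}))=0$ I would apply the Leibniz rule to each summand $c_\ell\,\dot X_\delta^{2i-\ell}\dot E^{j+\ell}(b_{2i,j})\,E^{k-\ell}X_4^\ell$, where $c_\ell=(-2)^\ell\binom{k}{\ell}\binom{j+\ell}{\ell}^{-1}$. Combining $[X_1,X_\delta]=0$ with identity \eqref{Dk11} yields $\dot X_1(\dot X_\delta^{2i-\ell}\dot E^{j+\ell}(b_{2i,j}))=\tfrac{j+\ell}{2}\,\dot X_\delta^{2i-\ell+1}\dot E^{j+\ell-1}(b_{2i,j})$, while $[X_1,E]=X_4$ and $[X_1,X_4]=0$ give $\dot X_1(E^{k-\ell}X_4^\ell)=(k-\ell)E^{k-\ell-1}X_4^{\ell+1}$. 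Reindexing the first contribution by $m=\ell-1$, both contributions collapse to sums of the common shape $\dot X_\delta^{2i-m}\dot E^{j+m}(b_{2i,j})\,E^{k-m-1}X_4^{m+1}$, and the coefficients cancel thanks to the elementary identities $\binom{k}{m+1}(m+1)=\binom{k}{m}(k-m)=k\binom{k-1}{m}$, $(j+m+1)\binom{j+m+1}{m+1}^{-1}=(m+1)\binom{j+m}{m}^{-1}$, and the sign shift $(-2)^\ell=-2(-2)^{\ell-1}$. This computation is formally the same as its counterpart in Section~6 of \cite{BCT}.

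For the congruence on $\lieq^+$, since $[\lieq,\liey]\subset\liey$ ensures that $\dot X$ preserves $U(\liek)\liey$ for every $X\in\lieq$, it suffices to verify $\dot X_\alpha(D_k(b_{2i,j}))\equiv 0\mod\!(U(\liek)\liey)$ for $\alpha$ in a simple system of $\lieq^+$, namely $\{\widetilde\epsilon_3-\widetilde\epsilon_4,\,\widetilde\epsilon_4-\widetilde\epsilon_1,\,\widetilde\epsilon_4+\widetilde\epsilon_1\}$; the extension to non-simple positive roots follows by repeated brackets. The root $\widetilde\epsilon_3-\widetilde\epsilon_4$ is immediate: $T_{34}\in\liem^+$ commutes with $X_\delta,E,X_4$ and annihilates $b_{2i,j}$ because $\liem$ acts trivially on $V^M_{2i,j}$. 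For the remaining two simple roots $\alpha$, a weight count shows that each nonzero commutator $[X_\alpha,X_\delta]$, $[X_\alpha,E]$, $[X_\alpha,X_4]$ is a nonzero scalar multiple of one of the three spanning elements $X_2,S_{23},S_{24}$ of $\liey$ (for instance $[X_{\widetilde\epsilon_4+\widetilde\epsilon_1},X_\delta]\in\CC\,S_{24}$), while $\dot X_\alpha(b_{2i,j})\in V_{2i,j}$ can be expressed via Proposition~\ref{Techo} together with the commutation of $X_\alpha$ with $\liem^+$. A Leibniz expansion followed by reduction modulo $U(\liek)\liey$ then produces the same telescoping combinatorial pattern as in the previous paragraph, so the contributions cancel by the same binomial identities. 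This case-by-case bookkeeping of commutator terms is the main obstacle; it parallels the $\liek^+$-dominance argument for Sp$(n,1)$ in Section~6 of \cite{BCT}, the new feature being that all cancellations now take place modulo $U(\liek)\liey$ rather than strictly.
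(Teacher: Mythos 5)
Your verification of the weight and the annihilation $\dot X_1(D_k(b_{2i,j}))=0$ matches the paper's line of reasoning (the latter is deferred in the paper to the Sp$(n,1)$ computation in Lemma 6.5 of \cite{BCT}, which is exactly the calculation you outline). The problem lies in the $\lieq^+$-congruence.

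First, the proposed reduction to the three roots $\{\widetilde\epsilon_3-\widetilde\epsilon_4,\,\widetilde\epsilon_4-\widetilde\epsilon_1,\,\widetilde\epsilon_4+\widetilde\epsilon_1\}$ is not legitimate: those are the simple roots of the Levi of the maximal parabolic determined by $\gamma_1$, and the Lie subalgebra they generate is only the positive nilpotent part of that Levi, not all of $\lieq^+$. Root vectors in the nilradical cannot be reached by iterated brackets of these three. Concretely, $\psi_1=\tfrac12(-\widetilde\epsilon_1+\widetilde\epsilon_2-\widetilde\epsilon_3+\widetilde\epsilon_4)$ lies in $\lieq^+$, but its only decomposition as a sum of two positive roots is $\psi_1=\gamma_1+(\widetilde\epsilon_4-\widetilde\epsilon_1)$; since $X_{\gamma_1}\notin\lieq^+$, the element $X_{\psi_1}$ is not generated by the three simple Levi roots, so the claimed hereditary argument via brackets cannot cover it. The same objection applies to several other roots of the nilradical.

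Second, and more importantly, the envisioned ``telescoping'' cancellation among summands of $D_k$ is not needed and, as proposed, has no visible mechanism: the Leibniz contribution $(\dot X_\alpha$ applied to the factor $E^{k-\ell}X_4^\ell)$ already lands in $U(\liek)\liey$, so the burden falls entirely on the other factor $\dot X_\alpha\bigl(\dot X_{\delta}^{2i-\ell}\dot E^{j+\ell}(b_{2i,j})\bigr)$. The key fact the paper uses, and your sketch misses, is that each of these is \emph{exactly} zero, not merely congruent to it. By \eqref{Dk33}, $\dot X_{\delta}^{2i-\ell}\dot E^{j+\ell}(b_{2i,j})$ is a nonzero scalar multiple of $\dot X_{-1}^{\ell}(u_{2i,j})$, where $u_{2i,j}=\dot X_{\delta}^{2i}\dot E^{j}(b_{2i,j})$ is a $\liek^+$-dominant vector (Proposition~\ref{hw3}(iv)). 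Since $\gamma_1$ is a simple root of $\Delta^+(\liek,\lieh_\liek)$, for any $\alpha\in\Delta^+(\liek,\lieh_\liek)\setminus\{\gamma_1\}$ the difference $\alpha-\gamma_1$ is either a positive root $\ne\gamma_1$ or not a root; an easy induction then shows $\dot X_\alpha\bigl(\dot X_{-1}^{\ell}(u_{2i,j})\bigr)=0$ for every such $\alpha$ and every $\ell$. This kills the first factor term by term, with no cancellation and no case analysis by root, and makes the whole claim follow at once from $\dot X_\alpha(E^nX_4^m)\equiv0\!\mod(U(\liek)\liey)$. Your attempt does not supply a replacement for this observation, and without it the $\lieq^+$-congruence is not established.
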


\begin{proof} Recall that $\lieq^+$ is the linear span of
$\{X_{\alpha}:\alpha\in\Delta^+(\liek,\lieh_\liek)-\{\gamma_1\}\}$.
Since $\gamma_1$ is a simple root in $\Delta^+(\liek,\lieh_\liek)$,
if $\alpha$ is a positive root it follows that $\alpha-\gamma_1$ is
either a positive root different from $\gamma_1$ or it is not a root.
Hence if $u \in U(\liek)$ is a $\liek^{+}$-dominant vector we have,
\begin{equation*}
\dot X_{\alpha}\big(\dot X_{-1}^{\ell}(u)\big)=0 \quad
\text{for every}\quad \alpha\in\Delta^+(\liek,\lieh_\liek)-\{\gamma_1\}
\quad\text{and}\quad \ell \in \NN_0.
\end{equation*}
Then, in view of \eqref{Dk33}, it follows that
\begin{equation}\label{q+0}
\dot X\big(\dot X_{\delta}^{2i-\ell}\dot E^{j+\ell}(b_{2i,j})\big)=0
\quad \text{for every}\quad X \in \lieq^+.
\end{equation}
On the other hand, since $\dot E(\liey)=\dot X_4(\liey)=0$ and
$[\lieq^+,\liey]\subset \liey$ it follows that,
\begin{equation}\label{dotX(E)}
\dot X\big(E^n\big)\equiv 0 \quad \text{and} \quad \dot X\big(X_4^n\big)
\equiv 0\mod\!(U(\liek)\liey) \quad \text{for every} \quad X \in \lieq^+.
\end{equation}
Hence from \eqref{Dk}, \eqref{q+0} and \eqref{dotX(E)} we obtain that,
\begin{equation}\label{dotX(D)}
\dot X(D_k(b_{2i,j})) \equiv 0\mod\!(U(\liek)\liey) \quad
\text{for every}\quad X \in \lieq^+.
\end{equation}

Now, since $\dot X_1(E)=X_4$ and $\dot X_1(X_4)=0$,
using \eqref{Dk11} it follows that $\dot X_1(D_k(b_{2i,j}))=0$. The details
of this calculation can be found in the proof of Lemma 6.5 of \cite{BCT}.
Finally, it is easy to check that each term of $D_k(b_{2i,j})$ is a vector of weight $i(\gamma_4+\delta)+(j+k)\gamma_3$ with respect to $\lieh_\liek$.\qed
\end{proof}

\smallskip

As indicated at the beginning of the section we are interested in proving
that $P(T)$ implies $P(T-1)$ for $m\le T\le 2d_0$. To do this we need to show
that the $K$-types $b^r_{2i,T-r-2i}$ that occur in the $T-r$ skew diagonal
of $b_r$ are equal to zero for $0\le r\le m$. That is,
\begin{equation*}\label{P(T-1)} b^r_{2i,T-r-2i}=0 \quad\text
{if}\quad 0\le T-r-2i\le\min\big\{T,2d_0-T\big\}-r,
\end{equation*}
for $0\le r\le m$. For this purpose we introduce another
propositional function $Q(n)$ defined for $0\le
n\le\min\big\{T,2d_{0}-T\big\}+1$ as follows,
\begin{equation}\label{Q(n)}
Q(n):\,\, b^r_{2i,T-r-2i}=0 \quad \text {if}\quad 0\le T-r-2i<n
\quad \text {for}\quad 0\le r\le m.
\end{equation}
\noindent Clearly $Q(0)$ is true. Also, since we have that
$d(b_r)\le 2d_r$ for $0\le r\le m$, we obtain that \eqref{Q(n)} holds if
$T-r-2i>\min\big\{T,2d_{0}-T\big\}-r$.

\smallskip

Next theorem is the analogue of Theorem 6.6 of \cite{BCT} and its proof
is the same as that of Theorem 6.6, it consist in rewriting the sum
$\mathcal E_L^1(n)$ in terms of the vectors $D_k(b_{2i,j})$ defined in
Lemma \ref{LemDk}, and the sum $\mathcal E_L^2(n)$ in terms of
$\liek^+$-dominant vectors. We refer the reader to Section 6 of \cite{BCT}
for the details.

\begin{thm}\label{Thm 6} Let $b=b_m\otimes Z^m+\cdots+b_0\in B$
be such that $d(b_r)\le 2d_r$ for $0\le r\le m$, let $m\le T\le 2d_0$
and $0\le n\le\min\big\{T,2d_{0}-T\big\}$. Then if $P(T)$ and $Q(n)$
are true we have,
\begin{equation}\label{sist3}
\begin{split}
\sum_{\substack{ r,k\\T-L\le 2k+r\le T-n}}&
B_{r,k}(T,n,L)D_{L+2k+r-T}(b^r_{2k,T-r-2k})(X_{\delta}X_4)^{T-k}E^n\\
-\sum_{\substack{r,\ell\\r\equiv T-n}}&(-2)^\ell\textstyle\binom
L\ell\textstyle \binom{T-n-\ell}{r-\ell}
u^r_{T-r-n,n}(X_{\delta}X_4)^{(T+r+n)/2}E^L\equiv0,
\end{split}
\end{equation}
for all $L$ such that $0\leq L\leq \min\{2m,T\}-n$. Here the congruence is module
the left ideal $U(\liek)\liem^+$, $u^r_{T-r-n,n}=r!(-1)^r
\dot X_{\delta}^{T-n-r}\dot E^n(b^r_{T-n-r,n})$ and
\begin{equation*}\label{Brk}
B_{r,k}(T,n,L)=r!(-1)^T2^{T-r-2k}\textstyle\binom
L{T-r-2k}\textstyle \binom{T-L-n}{r-n}.
\end{equation*}
Moreover, the left hand side of equation \eqref{sist3} is a
weight vector of weight $T(\gamma_4+\delta)+(n+L)\gamma_3$.
\end{thm}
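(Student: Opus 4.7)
The plan is to mimic the proof of Theorem 6.6 in \cite{BCT}, starting from the congruence
$$\mathcal{E}_L(n)\equiv 0\mod(U(\liek)\liem^+),$$
which holds by Proposition \ref{sist2} under the assumption that $P(T)$ is true (note that $\mathcal{E}_L(n)$ is a linear combination of the $\widetilde\epsilon(\ell,n)$, each of which is known to be congruent to zero). Using the decomposition $\mathcal{E}_L(n)=(-1)^n\mathcal{E}_L^1(n)E^n-\mathcal{E}_L^2(n)E^L$ from \eqref{epsiL'}, the task splits into rewriting each of the two sums $\mathcal{E}_L^1(n)$ and $\mathcal{E}_L^2(n)$ into the form appearing on the left hand side of \eqref{sist3}.

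The first step is to insert the expression for $\Sigma_1$ from Proposition \ref{sist2} into $\mathcal{E}_L^1(n)$, expand the $M$-invariants $b^r_{2k,T-r-2k}$, and interchange the order of summation so that the inner sum ranges over the index $\ell$ appearing in \eqref{epsiL}. The resulting inner sum over $\ell$ must be matched, up to a single common factor of $(X_\delta X_4)^{T-k}E^n$, with the defining sum \eqref{Dk} for $D_k(b_{2i,j})$; the identification of the parameters forces the subscript $L+2k+r-T$ and produces the binomial coefficient $B_{r,k}(T,n,L)$ after collecting $r!(-1)^T 2^{T-r-2k}$ and the two binomials coming from $A_{i,r}(T,n,\ell)$ and from $\binom{L}{\ell}$. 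Here the key technical point is the commutation of $\dot X_\delta$ with the ideal $U(\liek)\liem^+$ (so that the rewriting is legitimate modulo this ideal), together with the congruence $X_\delta\equiv X_4\mod (U(\liek)\liem^+)$ used to match the powers of $X_4$ and $X_\delta$ appropriately.

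For the second sum $\mathcal{E}_L^2(n)$, one again substitutes the expression for $\Sigma_2$ and uses the hypothesis $Q(n)$ to kill every term with $T-r-2k<n$. Combined with the constraint $d(b_r)\le 2d_r$, which kills the terms on the opposite side, what survives is precisely the contribution from $k=(T-r-n)/2$ (which forces $r\equiv T-n\pmod 2$) and reduces, after extracting the factor $(X_\delta X_4)^{(T+r+n)/2}E^L$, to a multiple of $\dot X_\delta^{T-n-r}\dot E^n(b^r_{T-n-r,n})$. This is exactly the vector denoted $u^r_{T-r-n,n}$ in the statement, and Proposition \ref{hw3}(iv) together with Lemma \ref{LemDk} ensures that it is a $\liek^+$-dominant vector of the correct weight $T(\gamma_4+\delta)+(n+L)\gamma_3$. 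The binomial coefficients $(-2)^\ell\binom{L}{\ell}\binom{T-n-\ell}{r-\ell}$ emerge from combining $A_{i,r}(T,\ell,n)$ with the factor $(-2)^\ell\binom{L}{\ell}$ of \eqref{epsiL}.

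The main obstacle I expect is bookkeeping: keeping the indices $\ell,n,i,r,k,L$ coherent while carrying out the change of summation order and the binomial identity
$$\sum_\ell (-2)^\ell\binom{L}{\ell}\binom{T-n-\ell}{r-\ell}\binom{\ell}{i-n}=\pm\,2^{T-r-2k}\binom{L}{T-r-2k}\binom{T-L-n}{r-n},$$
which is what produces $B_{r,k}(T,n,L)$; this is a purely combinatorial manipulation that can be verified by the same reasoning as in \cite{BCT}. A secondary point that must be checked is that every step is compatible with the congruence modulo $U(\liek)\liem^+$; this follows from the fact that $\dot X_\delta$ preserves $U(\liek)\liem^+$ and from $X_\delta\equiv X_4$. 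The weight assertion is then immediate from the weight computation in Lemma \ref{LemDk} and the fact that $E$ and $X_\delta X_4$ have weights $\gamma_3$ and $\gamma_4+\delta$ respectively.
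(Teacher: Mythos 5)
Your plan coincides with the paper's own proof, which simply states that Theorem \ref{Thm 6} is proved exactly as Theorem 6.6 of \cite{BCT} --- rewrite $\mathcal E_L^1(n)$ in terms of the $D_k$ of Lemma \ref{LemDk}, rewrite $\mathcal E_L^2(n)$ in terms of $\liek^+$-dominant vectors --- and then refers to Section 6 of \cite{BCT} for the bookkeeping. One small inaccuracy worth noting: in the $\Sigma_2$ reduction, what kills the terms with $T-r-2k>n$ is not the bound $d(b_r)\le 2d_r$ but the vanishing criterion of Proposition \ref{hw3}(iv) applied to $\dot X_{\delta}^{T-n-i}\dot E^{n+i-r}(b^r_{2k,T-r-2k})$ (which requires $T-n-i\le 2k$) together with the constraint $i\le r$ from the index set $I_2$; combined with $Q(n)$ (which gives $2k\le T-r-n$) this forces $i=r$ and $2k=T-r-n$, and the displayed binomial identity also mislabels which binomials come from $A_{i,r}$, though the intended manipulation is clear from \cite{BCT}.
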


We are now in a good position to obtain the system of equations that we are
looking for. Using the notation introduced in \eqref{ST} define,
\begin{equation}\label{U}
U=X_{\delta}X_4-T_{23}S_{23}+T_{24}S_{24}.
\end{equation}
Then $U$ is a $\liek^+$-dominant vector of weight $\gamma_4+\delta$ with
respect to $\lieh_{\liek}$  and $U\equiv X_{\delta}X_4 \mod(U(\liek)\liey)$.
For any $T$ and $n$ such that $m\leq T\le 2d_0$ and $0\le n\le\min\{T,2d_{0}-T\}$
consider the following sets,
\begin{equation*}\label{LR}
\begin{split}
L(T,n)=&\left\{L\in{\NN_0}: 0\leq L\leq\min\{2m,T\}-n,\ \ L\not\equiv n\,\right\},\\
R_F(T,n)=&\left\{r\in{\NN_0}: 0\leq
r\leq\min\{m,\min\{T,2d_{0}-T\}-n\},\ r \equiv T-n\,\right\},
\end{split}
\end{equation*}
the congruence is\!\!\! $\mod(2)$ and the subindex $F$ stands for $F_4$.
Let $|L(T,n)|$ and $|R_F(T,n)|$ denote the cardinality of these sets. The set
$L(T,n)$ was also considered for the symplectic group Sp($n$,1) while
$R_F(T,n)$ is the analogue of the set $R(T,n)$ defined in Section 6 of \cite{BCT}.

\smallskip

Next theorem gives a system of linear equations where the unknowns, $u^r_{T-r-n,n}$,
are $\liek^+$-dominant vectors associated to the $K$-types that occur in the $T-r$
skew diagonal of the coefficient $b_r$ of $b$ for $0\le r\le m$ (see (\ref{array})).

\begin {thm}\label{sistLR} Let $b=b_m\otimes Z^m+\cdots+b_0\in B$ be such that
$d(b_r)\le 2d_r$ for $0\le r\le m$, and let $m\le T\le 2d_0$ and
$0\le n\le\min\big\{T,2d_{0}-T\big\}$. Then if $P(T)$  and $Q(n)$ are true
we have,
\begin{equation}\label{sistLR1}
\sum_{r\in R_F(T,n)}\Big(\sum_\ell(-2)^\ell\textstyle\binom
L\ell\textstyle \binom{T-n-\ell}{r-\ell}\Big)\, u^r_{T-r-n,n}\, U^{(T+r+n)/2}=0,
\end{equation}
for every $L\in L(T,n)$. Here $u^r_{T-r-n,n}=r!(-1)^r\dot X_{\delta}^{T-n-r}
\dot E^n(b^r_{T-n-r,n})$.
\end{thm}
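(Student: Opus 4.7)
\textit{Proof plan.} The strategy is to refine the congruence \eqref{sist3} of Theorem \ref{Thm 6} stepwise until only the claimed identity remains. First, I would upgrade the congruence from modulo $U(\liek)\liem^+$ to modulo the smaller left ideal $U(\liek)\liey$ via Theorem \ref{m+/n}. The LHS of \eqref{sist3} is already a weight vector of weight $T(\gamma_4+\delta)+(n+L)\gamma_3$, so it suffices to check that $\dot X(\text{LHS})\in U(\liek)\liey$ for every $X\in\lieq^+$. For the first sum this rests on Lemma \ref{LemDk}. For the second sum, each $u^r_{T-r-n,n}=r!(-1)^r\dot X_\delta^{T-n-r}\dot E^n(b^r_{T-n-r,n})$ is by Proposition \ref{hw3}(iv) a $\liek^+$-dominant (highest weight) vector of weight $\xi_{T-n-r,n}$, hence is killed by every $X\in\liek^+\supset\lieq^+$. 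A case-by-case calculation in $\Delta(\liek,\lieh_\liek)$ shows that $\dot X(E)$ and $\dot X(X_\delta X_4)$ land in $\liey$ for $X\in\lieq^+$; combining this with the Leibniz rule, $\dot X$ sends each monomial factor $(X_\delta X_4)^a E^b$ into $U(\liek)\liey$. Theorem \ref{m+/n} then promotes the congruence to one modulo $U(\liek)\liey$.

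Next, using that $X_\delta X_4\equiv U\mod U(\liek)\liey$ (from \eqref{U} and $S_{23},S_{24}\in\liey$), I would replace $X_\delta X_4$ by $U$ throughout, rewriting the refined identity in the form $\eta_n E^n+\eta_L E^L\equiv 0\mod U(\liek)\liey$. Both coefficients $\eta_n,\eta_L$ are annihilated by $\dot X_1$: Lemma \ref{LemDk} handles the $D_k$ factors, $\liek^+$-dominance handles the $u^r$ factors, and a direct check on each of the three summands of \eqref{U} gives $\dot X_1(U)=0$. Since $L\in L(T,n)$ means $L\not\equiv n \mod 2$, Proposition \ref{parimpar} separates the two parities, yielding
\begin{equation*}
\sum_{r\in R_F(T,n)}\Big(\sum_\ell(-2)^\ell\textstyle\binom L\ell\textstyle\binom{T-n-\ell}{r-\ell}\Big)\,u^r_{T-r-n,n}\,U^{(T+r+n)/2}E^L\equiv 0\mod U(\liek)\liey.
\end{equation*}

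Because $E\notin\liey$ and $\dot E(\liey)=0$, Lemma \ref{U(g)l2} lets me cancel the trailing $E^L$, leaving the same sum (now without $E^L$) inside $U(\liek)\liey$. That sum is a linear combination of products of two $\liek^+$-dominant vectors, $u^r_{T-r-n,n}$ and $U$, so it is itself $\liek^+$-dominant; a direct weight computation shows that each term carries the common weight $\xi_{T-n-r,n}+\tfrac{T+r+n}{2}(\gamma_4+\delta)=T(\gamma_4+\delta)+n\gamma_3$, with $T,n\in\NN_0$. Since $U(\liek)\liey\subset U(\liek)\liem^+$, Theorem \ref{k+dom} then forces the sum to vanish as an element of $U(\liek)$, which is exactly the identity \eqref{sistLR1}.

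The main technical obstacle is the very first step: verifying that multiplication by the monomial factors $(X_\delta X_4)^a E^b$ preserves $\lieq^+$-dominance modulo $U(\liek)\liey$. This requires a careful case analysis in the compact root system $\Delta(\liek,\lieh_\liek)$, relying on the structural identities of Section \ref{F4} (notably \eqref{m^+} and \eqref{ip1-im1}) together with the definition of $\liey$. After that, the parity separation via Proposition \ref{parimpar} and the final cancellation via Theorem \ref{k+dom} are clean applications of the transversality machinery developed in Section \ref{trans.}, parallel to the symplectic case treated in \cite{BCT}.
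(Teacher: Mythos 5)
Your proposal is correct and follows essentially the same route as the paper's own proof: start from \eqref{sist3} of Theorem \ref{Thm 6}, upgrade the congruence from $U(\liek)\liem^+$ to $U(\liek)\liey$ via Theorem \ref{m+/n} (checking $\lieq^+$-invariance term by term using Lemma \ref{LemDk}, the $\liek^+$-dominance from Proposition \ref{hw3}(iv), and the action of $\lieq^+$ on $E$, $X_\delta$, $X_4$), replace $X_\delta X_4$ by $U$, separate parities with Proposition \ref{parimpar}, cancel the trailing $E^L$ with Lemma \ref{U(g)l2}, and finish with Theorem \ref{k+dom}. The lemmas invoked, the order in which they are applied, and the weight bookkeeping all match the paper's argument.
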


\begin{proof} Let $u$ denote the left hand side of equation \eqref{sist3}. Then, in
view of Theorem \ref{Thm 6}, $u$ is a vector in $U(\liek)\liem^+$ of weight
$\lambda=T(\gamma_4+\delta)+(n+L)\gamma_3$ with respect to $\lieh_{\liek}$.

On the other hand, using that $\dot X\big(X_{\delta}\big)\equiv 0
\mod\!(U(\liek)\liey)$ for every $X \in \lieq^+$, together with \eqref{dotX(E)},
\eqref{dotX(D)} and the fact that $E$, $X_4$ and $X_{\delta}$ commute with
$\liey$ and that $[\lieq^+,\liey]\subset \liey$, it follows
that $\dot X(u)\equiv 0\mod\!(U(\liek)\liey)$ for every $X \in \lieq^+$. Then
applying Theorem \ref{m+/n} we obtain that $u\equiv0\mod\!(U(\liek)\liey)$, that is,
\begin{equation}\label{sist3/n}
\begin{split}
\sum_{\substack{ r,k\\T-L\le 2k+r\le T-n}}&
B_{r,k}(T,n,L)D_{L+2k+r-T}(b^r_{2k,T-r-2k})(X_{\delta}X_4)^{T-k}E^n\\
-\sum_{\substack{r,\ell\\r\equiv T-n}}&(-2)^\ell\textstyle\binom
L\ell\textstyle \binom{T-n-\ell}{r-\ell}
u^r_{T-r-n,n}(X_{\delta}X_4)^{(T+r+n)/2}E^L\equiv0.
\end{split}
\end{equation}

Since $U\equiv X_{\delta}X_4$\!$\mod(U(\liek)\liey)$ (see \eqref{U}), we replace
$X_{\delta}X_4$ by $U$ in \eqref{sist3/n}. Also, recall that $\dot X_1(X_{\delta})=
\dot X_1(X_4)=0$ and $\dot X_1(D_k(b_{2i,j}))=0$ for $b_{2i,j}\in U(\liek)^M$
of type $(2i,j)$ and $0\leq k\leq 2i$ (see Lemma \ref{LemDk}). Hence, since
$L\not\equiv n\mod(2)$, it follows from Proposition \ref{parimpar} and
Lemma \ref{U(g)l2} that
\begin{equation}\label{sistLR1/n}
\sum_{r\in R_F(T,n)}\Big(\sum_\ell(-2)^\ell\textstyle\binom
L\ell\textstyle \binom{T-n-\ell}{r-\ell}\Big)\, u^r_{T-r-n,n}
U^{(T+r+n)/2}\equiv 0,
\end{equation}
module the left ideal $U(\liek)\liey$.
Now, since the left hand side of equation \eqref {sistLR1/n} is a
$\liek^+$-dominant vector of weight $T(\gamma_4+\delta)+n \gamma_3$,
applying Theorem \ref{k+dom} we can replace the congruence mod($U(\liek)\liey$)
by an equality. This completes the proof of the theorem.\qed
\end{proof}

\smallskip

For $T$ and $n$ fixed, Theorem \ref{sistLR} gives a system of $|L(T,n)|$
linear equations in the $|R_F(T,n)|$ unknowns $u^r_{T-r-n,n}$. This system is
the analogue of the one given in Theorem 6.7 of \cite{BCT}. The main advantage
of this system is that the unknowns are all $\liek^+$-dominant vectors.
Let $A(T,n)$ denote the coefficient matrix of this system. In Section 6 of
\cite{BCT} a very thorough study of this matrix is carried out (see Subsection 6.2).
This is done by considering a $(k+1)\times(k+1)$ matrix $A(s)$ with polynomial
entries $A_{ij}(s)\in\CC[s]$ that generalizes $A(T,n)$. This matrix is defined as follows,
\begin{equation*}\label{Aij}
A_{ij}(s)=\sum_{0\le\ell\le\min\{L_i,2j+\delta\}}(-2)^\ell
\binom{L_i}\ell\binom{s-\ell}{2j+\delta-\ell},
\end{equation*}
where $0\le L_0<\cdots<L_k$ is a sequence of integers and $\delta\in\{0,1\}$. In
Theorem 6.15 of \cite{BCT} it is obtained an explicit formula  for $\det A(s)$
as a product of  polynomials of degree one in $s$. Hence, we know the exact values
of $s$ for which $A(s)$ is singular. Moreover, from the proof of Theorem 6.15
it follows that whenever $A(s)$ is singular the reason is that it has several pairs
of equal rows. In this case the strategy consist in replacing one equation
in each one of these pairs by a new equation obtained from Theorem \ref{Thm 6}.
We refer the reader to Subsection 6.3 of \cite{BCT} for the details.

\smallskip

Since our goal in this section is to prove Theorem \ref{Btilde2} we need to
restate Theorem \ref{sistLR} for elements $b\in\widetilde B$. If $b=
\sum_{r=0}^m b_r \otimes Z^r \in \widetilde B$, it follows from \eqref{Btilde}
that for $r$ even we have $b^{r}_{2i,j}=0$\, if\, $d(b^{r}_{2i,j})=2(i+j)\le r$.
Hence, when $T-n \equiv 0$ and $r\in R_F(T,n)$ is such that $d(b^r_{T-r-n,n})=T-r+n\le r$,
we have $u^r_{T-r-n,n}=0$ in equation \eqref{sistLR1}. Then we may consider a new
index set defined as follows,
\begin{equation}\label{Rtilde1}
\widetilde R_F(T,n)=\begin{cases}
\{r\in R_F(T,n):r<\tfrac{T+n}2 \},&\text{if $T-n\equiv 0$}\\
R_F(T,n),&\text{if $T-n\equiv 1$},
\end{cases}
\end{equation}
where the congruence is\!\!\! $\mod(2)$. For $b\in\widetilde B$ we restate
Theorem \ref{sistLR} as follows. This theorem is the analogue of Theorem 6.19
of \cite{BCT} and it will be our main tool in the proof of Theorem \ref{Btilde2}.

\begin {thm}\label{sistLR2} Let $b=b_m\otimes Z^m+\cdots+b_0\in\widetilde B$ be such
that $d(b_r)\le 2d_r$ for $0\le r\le m$, and let $m\le T\le 2d_0$ and
$0\le n\le\min\big\{T,2d_{0}-T\big\}$. Then if $P(T)$  and $Q(n)$ are true
we have,
\begin{equation*}\label{sistLR3}
\sum_{r\in\widetilde R_F(T,n)}\Big(\sum_\ell(-2)^\ell\textstyle\binom
L\ell\textstyle \binom{T-n-\ell}{r-\ell}\Big)\, u^r_{T-r-n,n}\, U^{(T+r+n)/2}=0,
\end{equation*}
for every $L\in L(T,n)$. Here $u^r_{T-r-n,n}=r!(-1)^r\dot X_{\delta}^{T-n-r}
\dot E^n(b^r_{T-n-r,n})$.
\end{thm}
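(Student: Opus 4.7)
The plan is to deduce Theorem \ref{sistLR2} as a direct refinement of Theorem \ref{sistLR}, without any new transversality arguments. Since $\widetilde B\subset B$, Theorem \ref{sistLR} already applies verbatim to $b$, yielding
\begin{equation*}
\sum_{r\in R_F(T,n)}\Big(\sum_\ell(-2)^\ell\textstyle\binom L\ell\binom{T-n-\ell}{r-\ell}\Big)\,u^r_{T-r-n,n}\,U^{(T+r+n)/2}=0
\end{equation*}
for every $L\in L(T,n)$. So it suffices to prove that each term of this sum indexed by $r\in R_F(T,n)\setminus\widetilde R_F(T,n)$ already vanishes, i.e.\ that $u^r_{T-r-n,n}=0$ for such $r$.

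Inspecting \eqref{Rtilde1}, the set $R_F(T,n)\setminus\widetilde R_F(T,n)$ is empty when $T-n\equiv1\pmod 2$ (so the statement is simply Theorem \ref{sistLR}), while when $T-n\equiv0\pmod 2$ it consists of those even $r\in R_F(T,n)$ satisfying $r\geq\tfrac{T+n}{2}$. So fix such an even $r=2k$. The relevant $K$-type $b^r_{T-n-r,n}$ corresponds in the $(2i,j)$ notation of \eqref{types1} to $2i=T-n-r$ and $j=n$; note this is consistent since $T-n-r$ is even under our parity assumption. Then
\begin{equation*}
i+j=\tfrac{T-n-r}{2}+n=\tfrac{T+n}{2}-k,
\end{equation*}
so the inequality $r\geq\tfrac{T+n}{2}$, i.e.\ $k\geq\tfrac{T+n}{4}$ rewritten as $\tfrac{T+n}{2}-k\leq k$, is exactly $i+j\leq k$. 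This is precisely the vanishing condition \eqref{Btilde} defining $\widetilde B$, so $b^r_{T-n-r,n}=0$ and therefore $u^r_{T-r-n,n}=r!(-1)^r\dot X_\delta^{T-n-r}\dot E^n(b^r_{T-n-r,n})=0$.

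The argument is essentially a bookkeeping reduction, so the only obstacle is making sure all the parity and divisibility constraints line up consistently: that $r\equiv T-n\pmod 2$ (built into $R_F(T,n)$) makes $T-n-r$ even so that the notation $b^r_{T-n-r,n}$ with first index interpreted as ``$2i$'' is meaningful, and that the translation between the inequality $r\geq\tfrac{T+n}{2}$ in \eqref{Rtilde1} and the condition $i+j\leq k$ in \eqref{Btilde} is an exact rewriting. Once this is verified, removing the vanishing terms from \eqref{sistLR1} produces \eqref{sistLR3}, completing the proof.
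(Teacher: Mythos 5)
Your proposal is correct and matches the paper's own reasoning, which appears in the paragraph immediately preceding the theorem: one simply observes that for $r\in R_F(T,n)\setminus\widetilde R_F(T,n)$ the relevant $K$-type $b^r_{T-n-r,n}$ has Kostant degree $T+n-r\le r$ and hence vanishes by the definition of $\widetilde B$, so $u^r_{T-r-n,n}=0$ and those terms drop from the sum in Theorem \ref{sistLR}. The bookkeeping translation between $r\ge\frac{T+n}{2}$ and $i+j\le k$ is verified exactly as you have it.
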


\smallskip

Now we recall the definition of the sets $R(T,n)$ and $\widetilde R(T,n)$
used in the case of the group Sp($n$,1) (see Section 6 of \cite{BCT}).
Let $b=b_m\otimes Z^m+\cdots+b_0\in\widetilde B$ with $b_m\neq 0$. For positive
integers $T$ and $n$ such that $m\leq T\le4m$ and $0\le n\le\min\{T,4m-T\}$ consider
the following set
\begin{equation*}
R(T,n)=\left\{r\in{\NN_0}: 0\leq r\leq\min\{m,\min\{T,4m-T\}-n\},\,r \equiv T-n \right\},
\end{equation*}
where the congruence is \!\!\!$\mod(2)$. The set $\widetilde R(T,n)$ is defined as in
\eqref{Rtilde1} replacing $R_F(T,n)$ by $R(T,n)$ (see (116) in \cite{BCT}). Next we will
show that Theorem \ref{Btilde2} follows from Proposition 6.21 and Proposition 6.22 of
\cite{BCT}.

\medskip

\noindent{\bf Proof of Theorem \ref{Btilde2}.}
Let $b=b_m\otimes Z^m+\cdots+b_0\in \widetilde B$ be such that $d(b_r)\le 2d_r$
for $0\le r \le m$. We need to show that $b=0$. Assume on the contrary that
$b\neq 0$ and that $m=\deg(b)$, that is $b_m\neq0$. We will obtain a contradiction
by showing that $b_m=0$. In view of the definition of $\widetilde B$ (see \eqref{Btilde})
to do this it is enough to show that $P\left(\frac{3m}{2}\right)$ holds if $m$ is even and
that $P(m-1)$ is true if $m$ is odd. Since $P(2d_0)$ holds (see \eqref{types} and
\eqref{Pfunction}) this will follow from the fact that $P(T)$ implies
$P(T-1)$ for any $m\le T\le 2d_0$.

\smallskip

Consider first $m\ge 1$. Let $m\le T\le 2d_0$ and $0\le n\le\min\big\{T,2d_{0}-T\big\}$,
and assume that $P(T)$ and $Q(n)$ hold. Since $2d_0\le 4m$, it follows that $\min\big\{T,2d_{0}-T\big\}\le\min\{T,4m-T\}$ and a simple calculation shows that
$$\min\{m,\, \min\{T,2d_0-T\}-n\}\le \min\{m,\, \min\{T,4m-T\}-n\}.$$
Hence $R_F(T,n)\subset R(T,n)$ and therefore $\widetilde R_F(T,n)
\subset \widetilde R(T,n)$.

Now set, $u^r_{T-r-n,n}=0$ if $r\in \widetilde R(T,n)$ and $r\not\in \widetilde R_F(T,n)$
and $u^r_{T-r-n,n}=r!(-1)^r\dot X_{\delta}^{T-n-r}\dot E^n(b^r_{T-n-r,n})$
if $r\in \widetilde R_F(T,n)$. Then from Theorem \ref{sistLR2} we obtain for every
$L\in L(T,n)$ that,
\begin{equation}\label{sistLR4}
\sum_{r\in\widetilde R(T,n)}\Big(\sum_\ell(-2)^\ell\textstyle\binom
L\ell\textstyle \binom{T-n-\ell}{r-\ell}\Big)\, u^r_{T-r-n,n}\, U^{(T+r+n)/2}=0,
\end{equation}
Observe that, except for the fact that the vector $X_{\delta}X_4$ is replaced by $U$,
the system of equations given by \eqref{sistLR4} is the same as that of Theorem 6.19
of \cite{BCT}, in particular, their coefficient matrices are exactly the same. Then
that $P(T)$ implies $P(T-1)$ for any $m\le T\le 2d_0$ follows from Proposition 6.21
and Proposition 6.22 of \cite{BCT}. We point out that the proof of these propositions
are based on a very thorough study of the coefficient matrix of these system. We refer
the reader to Theorem 6.15, Corollary 6.16 and Proposition 6.20 of \cite{BCT} for the
details.

Consider now $m=0$. Assume that $b=b_0\in \widetilde B$, $b\neq 0$, and that
$d(b)=d(b_0)\le 2d_0=2$. From  the definition of $\widetilde B$ (see \eqref{Btilde})
we have  $b=b_0=b_{2,0}^0+ b_{0,1}^0$, therefore $b_{2,0}^0\neq 0$ or $b_{0,1}^0\neq 0$, in particular
$d(b)=2$. Consider the element $b^{2}\omega = b^{2} \otimes Z^2+\omega_1b^{2} \otimes Z+
b^{2}\omega_0 \in B$, where $\omega = 1\otimes Z^2+\omega_1\otimes Z+\omega_0$ is the
element in $P(U(\lieg))^K$ defined in Lemma \ref{casimir}.

From Proposition \ref{d(uv)} we have $d(b^2)=4$, hence the component of Kostant
degree four of $b^2$ is nonzero. Now, as in Proposition \ref{Btilde1}, we can remove the
components of Kostant degree less or equal to two from $b^{2}$ and the components of Kostant
degree less or equal to zero from $b^{2}\omega_0$. This procedure defines an element
$\widetilde b=\widetilde b_{2} \otimes Z^2+\widetilde b_{1}\otimes Z+\widetilde b_{0}
\in\widetilde B$ with $d(\widetilde b_{r})\le 2d_r$ for $0\le r\le 2$, and such that
the component of Kostant degree four of $\widetilde b_{2}$ is the same as that of $b^{2}$.
Then $\widetilde b\neq 0$, which contradicts the first part of the proof. Therefore $b=0$,
as we wanted to prove.\qed

\newcommand\bibit[5]{\bibitem
{#2}#3, {\em #4,\!\! } #5}

\end{document}